\numberwithin{equation}{section} 
\newenvironment{pdeq}{ \left\{ \begin{aligned}}{\end{aligned}\right.}
\newcommand{\np}[1]{(#1)}
\newcommand{\nb}[1]{[#1]}
\newcommand{\bp}[1]{\big(#1\big)}
\newcommand{\bb}[1]{\big[#1\big]}
\newcommand{\Bp}[1]{\bigg(#1\bigg)}
\newcommand{\calp}{{\mathcal P}}
\newcommand{\R}{\mathbb{R}}
\newcommand{\N}{\mathbb{N}}
\DeclareMathOperator{\e}{e}
\DeclareMathOperator{\Div}{div}
\DeclareMathOperator{\supp}{supp}
\DeclareMathOperator{\trace}{Tr}
\newcommand{\embeds}{\hookrightarrow}
\newcommand{\set}[1]{\ensuremath{\{#1\}}}
\newcommand{\setcl}[2]{\ensuremath{\bigl\{#1\ \big\vert\ #2\bigr\}}}
\newcommand{\setcL}[2]{\ensuremath{\biggl\{#1\ \bigg\vert\ #2\biggr\}}}
\newcommand{\closure}[2]{\overline{#1}^{#2}}
\renewcommand{\restriction}[2]{#1 | _{#2}}
\newcommand{\transpose}{\top}
\newcommand{\idmatrix}{I}
\newcommand{\OmegaT}{\Omega\times(0,T)}
\newcommand{\OmegaTprime}{\Omega\times(0,T')}
\newcommand{\OmegaTk}{\Omega\times(0,T_k)}
\newcommand{\Omegat}{\Omega\times(0,t)}
\newcommand{\OmegaTzero}{\Omega\times[0,T)}
\newcommand{\OmegaTzerocl}{\overline{\Omega}\times[0,T)}
\newcommand{\partialOmegaT}{\partial\Omega\times(0,T)}
\newcommand{\ddt}{\frac{{\mathrm d}}{{\mathrm d}t}}
\newcommand{\grad}{\nabla}
\newcommand{\pt}{\partial_t}
\newcommand{\dx}{{\mathrm d}x}
\newcommand{\dtau}{{\mathrm d}\tau}
\newcommand{\dt}{{\mathrm d}t}
\newcommand{\dsigma}{{\mathrm d}\sigma}
\newcommand{\convder}{\mathrm{D}_t}
\newcommand{\jaumannder}[1]{\overset{\text{\tiny\raisebox{-0.2em}{$\triangledown$}}}{#1}}
\newcommand{\norm}[1]{\lVert#1\rVert}
\newcommand{\norml}[1]{\big\lVert#1\big\rVert}
\newcommand{\snorm}[1]{{\lvert #1 \rvert}}
\newcommand{\snormL}[1]{{\Bigl\lvert #1 \Big\rvert}}
\newcommand{\LR}[1]{\mathrm{L}^{#1}}
\newcommand{\LRloc}[1]{\mathrm{L}^{#1}_{\mathrm{loc}}} 
\newcommand{\CR}[1]{\mathrm{C}^{#1}}  
\newcommand{\CRi}{\CR \infty}
\newcommand{\CRci}{\CR \infty_0}
\newcommand{\CRc}[1]{\CR{#1}_0}
\newcommand{\CRweak}{\CR{}_{\mathrm{w}}}
\newcommand{\HSR}[1]{\mathrm{H}^{#1}} 
\newcommand{\HSRN}[1]{\mathrm{H}^{#1}_0}
\newcommand{\LRsigma}[1]{\mathrm{L}^{#1}_{\sigma}}
\newcommand{\HSRNsigma}[1]{\mathrm{H}^{#1}_{0,\sigma}}
\newcommand{\CRcisigma}{\CR{\infty}_{0,\sigma}}
\newcommand{\LH}{\mathrm{LH}_T}
\newcommand{\LHprime}{\mathrm{LH}_{T'}}
\newcommand{\LRdev}[1]{\mathrm{L}^{#1}_{\delta}} 
\newcommand{\HSRdev}[1]{\mathrm{H}^{#1}_{\delta}} 
\newcommand{\CRcidev}{\CR{\infty}_{0,\delta}}
\newcommand{\CRidev}{\CR{\infty}_{\delta}}
\newcommand{\XT}{\mathrm{X}_T}
\newcommand{\XTprime}{\mathrm{X}_{T'}}
\newcommand{\TestFn}{Z_{T'}}
\newcommand{\nsnonlinb}[2]{#1\cdot\grad #2}
\newcommand{\nsnonlin}[1]{\nsnonlinb{#1}{#1}}
\newcommand{\vvel}{v}
\newcommand{\vpres}{p}
\newcommand{\Vvel}{V}
\newcommand{\Vpres}{P}
\newcommand{\wvel}{w}
\newcommand{\wpres}{\mathfrak{q}}
\newcommand{\uvel}{u}
\newcommand{\straintensor}{D}
\newcommand{\rottensor}{W}
\newcommand{\Stens}{S}
\newcommand{\Ttens}{T} 
\newcommand{\tin}{\text{in }}
\newcommand{\ton}{\text{on }}
\newcommand{\eqrefsub}[2]{\ensuremath{\eqref{#1}_{#2}}}
\newcommand{\half}{\frac{1}{2}}
\newcommand{\nvec}{\mathrm{n}}
\newcommand{\potential}{\calp}
\newcommand{\newCCtr}[2][d]{
\newcounter{#2}\setcounter{#2}{0}
\expandafter\xdef\csname kyedtheconst#2\endcsname{#1}
}
\newcommand{\Cc}[2][nolabel]{
\stepcounter{#2}
\expandafter\ensuremath{\csname kyedtheconst#2\endcsname_{\arabic{#2}}}
\ifthenelse{\equal{#1}{nolabel}}
{}
{\expandafter\xdef\csname kyedconst#1\endcsname
{\expandafter\ensuremath{\csname kyedtheconst#2\endcsname_{\arabic{#2}}}}}
}
\newcommand{\Ccn}[2][nolabel]{
\expandafter\ensuremath{\csname kyedtheconst#2\endcsname}
\ifthenelse{\equal{#1}{nolabel}}
{}
{\expandafter\xdef\csname kyedconst#1\endcsname
{\expandafter\ensuremath{\csname kyedtheconst#2\endcsname}}}
}
\newcommand{\CcSetCtr}[2]{
\setcounter{#1}{#2}
}
\newcommand{\Cclast}[1]{
\expandafter\ensuremath{\csname kyedtheconst#1\endcsname_{\arabic{#1}}}
}
\newcommand{\Ccllast}[1]{
\addtocounter{#1}{-1}
\expandafter\ensuremath{\csname kyedtheconst#1\endcsname_{\arabic{#1}}}
\addtocounter{#1}{1}
}
\newcommand{\const}[1]{
\expandafter{\ifcsname kyedconst#1\endcsname
  \csname kyedconst#1\endcsname
\else
  \errmessage{Undefined Kyedconstant #1.}%
\fi}
}
\theoremstyle{plain}
\newtheorem{thm}{Theorem}[section]
\newtheorem{lem}[thm]{Lemma}
\newtheorem{prop}[thm]{Proposition}
\newtheorem{cor}[thm]{Corollary}
\theoremstyle{definition}
\newtheorem{defn}[thm]{Definition}
\newtheorem{asm}[thm]{Assumption}
\theoremstyle{remark}
\newtheorem{rem}[thm]{Remark}
\newcommand{\hs}{{\LRdev{2}(\Omega)}}
\newcommand{\dd}{\mathrm{d}} 
\newcommand{\Om}{\Omega}
\newcommand{\ve}{\varepsilon}
\newcommand{\tim}{{\times}}
\renewcommand{\dx}{\,{\mathrm d}x}
\renewcommand{\dtau}{\,{\mathrm d}\tau}
\renewcommand{\dt}{\,{\mathrm d}t}
\newcommand{\MR}[1]{}  
\newcommand{\bysame}{\rule{3em}{1pt}}  
\begin{document}
\newCCtr[C]{C}
\newCCtr[M]{M}
\newCCtr[c]{c}
\let\oldproof\proof
\def\proof{\CcSetCtr{c}{-1}\oldproof} 

\title{Leray--Hopf solutions to \\
a viscoelastoplastic fluid model\\ 
with nonsmooth stress-strain relation\thanks{The research was partially supported by Deutsche Forschungsgemeinschaft (DFG) through SFB 1114 {\em Scaling Cascades in Complex Systems}  (Project No.\ 235221301) via the subproject B01 
``Fault networks and scaling properties of deformation accumulation''}}

\author{
Thomas Eiter%
\thanks{Weierstrass Institute for Applied Analysis and Stochastics, Mohrenstra\ss{}e 39, 10117 Berlin, Germany}
\and 
Katharina Hopf%
\footnotemark[2]
\and 
Alexander Mielke%
\footnotemark[2]
\thanks{Institut f\"ur Mathematik, Humboldt-Universit\"at zu Berlin, Rudower Chaussee 25, 12489 Berlin, Germany}
}

\date{\today}
\maketitle

\begin{abstract} 
We consider a fluid model including viscoelastic and viscoplastic effects. The
state is given by the fluid velocity and an internal stress tensor that is
transported along the flow with the Zaremba--Jaumann derivative. Moreover, the
stress tensor obeys a nonlinear and nonsmooth dissipation law as well as stress
diffusion. We prove the existence of global-in-time weak solutions satisfying an energy inequality under general Dirichlet
conditions for the velocity field and Neumann conditions for the stress tensor. 
\end{abstract}

\noindent
\textbf{MSC2020:} 
35K61, 
35Q35, 
76A10, 
76D03. 
\\
\noindent
\textbf{Keywords:} viscoelastic fluid; stress diffusion; 
viscoplasticity; inhomogeneous time-dependent boundary values; 
existence; weak solutions; energy inequality


\section{Introduction}
\label{sec:Introduction}

In this article we investigate the equations of motion
that describe the flow of a viscoelastoplastic fluid with stress diffusion
modeled in the following way.
On a time interval  $(0,T)$ and a bounded domain $\Omega\subset\R^3$, 
we consider the system of equations
\begin{equation}\label{sys:PDE}
\begin{pdeq}
\rho \convder{\Vvel}-\Div\bp{\eta_1\Stens+2\mu\straintensor(\Vvel)-\Vpres\idmatrix}
&=F
&&\tin \OmegaT, \\[0.1em]
\Div\Vvel
&=0
&&\tin \OmegaT, \\[-0.3em]
\jaumannder{\Stens}+\partial\potential(\Stens)
-\gamma\Delta\Stens
&\ni\eta_2 \straintensor(\Vvel)
&&\tin\OmegaT.
\end{pdeq}
\end{equation}
Here the first two equations describe the flow of an incompressible fluid with
Eulerian velocity field $\Vvel\colon\OmegaT\to\R^3$ and pressure field
$\Vpres\colon\OmegaT\to\R$ affected by a prescribed external force
$F\colon\OmegaT\to\R^3$.  The relevant Cauchy stress tensor
$\mathbb T=\eta_1 S+ 2\mu\straintensor(\Vvel)-\Vpres\idmatrix$ consists of the
classical term $2\mu\straintensor(\Vvel)-\Vpres\idmatrix$ for Newtonian fluids
and an extra stress tensor
\[
  \Stens\colon\OmegaT\to\R^{3\times3}_{\delta}
\coloneqq\setcl{M\in\R^{3\times3}}{M=M^\transpose,\ \trace M=0},
\]
which satisfies the additional evolution equation \eqrefsub{sys:PDE}{3} and is
thus subject to a special transport encoded in $\jaumannder{\Stens}$ along the
velocity field $\Vvel$, a nonlinear dissipation law via
$\partial\potential(\Stens)$, and another diffusion process.  Here $\rho$,
$\eta_1$, $\eta_2$, $\mu$ and $\gamma$ denote positive constants, and
$\straintensor(\Vvel)\coloneqq \half\np{\grad\Vvel+\grad\Vvel^\transpose}$
denotes the symmetric rate-of-strain tensor. 
Following \cite{MoDuMu02MCMV,GerYue07RCMM,HerrenGeryaVand2017IRSDF,PHGAV2019SAFG}, we choose $\Stens \in
\R^{3\times3}_{\delta} $ to be the deviatoric stress tensor (i.e.\  $\trace
\Stens=0$), which corresponds to the incompressibility of the fluid encoded in
\eqref{sys:PDE}$_2$, such that the pressure $P$ contains the full spherical
part of the Cauchy stress tensor. For other modeling choices we refer to
Section \ref{su:Modeling}. 

System \eqref{sys:PDE} is complemented by boundary and initial conditions. The
former are given by 
\begin{equation}
  \label{eq:BoundaryConditions}
  \Vvel\cdot\nvec=0, \qquad \Vvel-(\Vvel\cdot\nvec)\nvec=g, 
  \qquad \nvec\cdot\grad\Stens=0 \qquad \ton \partialOmegaT,
\end{equation}
which means that there is no boundary flux, that the tangential part of the
fluid velocity at the boundary coincides with some prescribed function
$g\colon\partialOmegaT\to\R^3$ and that $\Stens$ has vanishing normal
derivative.  The first two conditions can also be summarized as $\Vvel=g$ on
$\partialOmegaT$ for some $g$ with $g\cdot\nvec=0$.  Note that, from a physical
point of view, only the case $g\cdot\nvec=0$ seems to fit to the Neumann
boundary condition $\nvec\cdot\grad\Stens=0$.  If we would allow for
$g\cdot n\neq 0$, more complicated boundary conditions for $S$ would be needed.
The initial conditions are
\begin{equation}
  \label{eq:InitialConditions}
\Vvel(\cdot,0)=\Vvel_0,\qquad\Stens(\cdot,0)=\Stens_0 \qquad\tin \Omega.
\end{equation}

For a fluid velocity $\Vvel$, the material derivative is given by
\[
\convder A\coloneqq \pt A+ \nsnonlinb{\Vvel}{A},
\]
and as an objective derivative of a tensor $\Stens$ we use the Zaremba--Jaumann derivative
\[
\jaumannder \Stens
\coloneqq \convder\Stens+\Stens\,\rottensor(\Vvel)-\rottensor(\Vvel)\,\Stens
=\pt\Stens+\nsnonlinb{\Vvel}{\Stens}+\Stens\,\rottensor(\Vvel)
 -\rottensor(\Vvel)\,\Stens,
\]
also called co-rotational derivative, where
$\rottensor(\Vvel)\coloneqq\half\np{\grad\Vvel-\grad\Vvel^\transpose}$.  Note
that this choice of the objective derivative is not canonical and there are
different ways to define objective derivatives for tensors.  However, the
choice made here is commonly used in geodynamics (cf.\
\cite{MoDuMu02MCMV,GerYue07RCMM,HerrenGeryaVand2017IRSDF,PHGAV2019SAFG})
and comes along with special features that are very useful for the
mathematical analysis, see below.
 
The mathematical study of viscoelastic fluids with different choices of the
objective derivatives (including the upper and lower convected Maxwell
derivatives) started in the middle 1980s, see e.g.\ \cite{JoReSa85HCTF,
  RenRen86LSPC, ReHrNo87MPV, CooSch91ILFV, Rena00MAVF}. Because of the strong
nonlinearities arising from the objective derivatives, a first global existence
result was only established years later in \cite{LioMas00GSOM} based on the
Zaremba--Jaumann derivative and a linear dissipation law
$\partial\potential(\Stens)= a \Stens$ with $a>0$. More recently, the more
difficult case of a Maxwell fluid with $\mu=0$ (and without stress diffusion,
i.e.\ $\gamma =0$) has also been considered, see~\cite{CLLM_2019} and
references therein.

For more general nonlinear situations there is a series of works involving
implicitly defined stress-strain relations of the type
$G(\Stens, \straintensor(\Vvel))=0$, see
\cite{BulicekGwiazdaMalekSwierczewskaGwiazda_UnsteadyFlowsImplicitlyConsitutedIncompFluids_2012}
and the references in the recent survey \cite{BlMaRa2020CIFM}. Viscoelastic
fluids have a constitutive relation of rate-type, i.e.\ they involve suitable
convective derivatives of the strain tensor $\straintensor(\Vvel)$ or of the
stress tensor, as in our equation \eqref{sys:PDE}$_{3}$. The treatment of such
nonlinearities is possible by using the recently introduced regularization of
stress diffusion, i.e.\ $\gamma>0$, as first illustrated in~\cite{BMPS2018PDEA}
for a simplified model replacing the tensor evolution by a scalar problem.  We
refer to \cite{MPSS18TVRT} for a careful thermodynamical modeling of such
viscoelastic fluids and to \cite{BaBuMa21LDET}, where a large data global
existence result for weak solutions was obtained for a one-parameter family of
convected tensor derivatives including the (simpler) case of the
Zaremba--Jaumann rate.

Our work is in a similar spirit as the latter one, but it generalizes the 
conventional linear or quadratic stress-strain relation
by allowing in~\eqref{sys:PDE}$_{3}$ for subdifferentials
\[
\begin{aligned}
\Stens \mapsto \partial \potential(\Stens) = \setcL{ A \in
	\LR{2}(\Om;\R^{3\times3}_{\delta})
}{ &\potential(\widetilde S) \geq \potential(S)
  + \int_\Omega A\colon
  (\widetilde S{-}S) \dx \\
 &\qquad\qquad\quad  \text{ for all }\widetilde S \in
 \LR{2}(\Om;\R^{3\times3}_{\delta})
} 
\end{aligned}
\]
of a general dissipation potential
$\potential: \LR{2}(\Om;\R^{3\times3}_{\delta}) \to [0,\infty]$ that is convex,
lower semicontinuous and satisfies $\potential(0)=0$.  The space
$\LR{2}(\Om;\R^{3\times3}_{\delta})$ is a natural choice in virtue of the
formal energy-dissipation balance~\eqref{eq:401} below. While
smooth stress-strain relations of polynomial type seem to be sufficient for the
modeling of polymeric fluids (cf.\
\cite{Rena00MAVF,BMPS2018PDEA,MPSS18TVRT,BaBuMa21LDET}), such nonsmooth
dissipation potentials are important for viscoelastoplastic fluid models that
are used in geodynamics for the deformation of rocks in lithospheric plates,
namely
\begin{equation}
  \label{eq:pot.example.intro}
\potential (S)=\int_\Omega \mathfrak P(S(x))\dx  \quad \text{with } 
 \mathfrak P(S) = \left\{\begin{array}{cl} \frac a2 |S|^2 & \text{for } 
    |S|\leq \sigma_\text{yield}, \\ 
\infty & \text{for } |S|> \sigma_\text{yield},
\end{array}  \right.
\end{equation}
where the yield stress $\sigma_\text{yield}>0$ determines the onset of
plastic flow behavior, see \cite{MoDuMu02MCMV,GerYue07RCMM} and
Section \ref{su:Modeling}. Observe that the potential defined
in~\eqref{eq:pot.example.intro} is indeed convex, lower semicontinuous in
$\LR{2}(\Om;\R^{3\times3}_{\delta})$ and satisfies $\potential(0)=0$.

In the context of geodynamics, it is also crucial to
allow for nontrivial boundary data $g\neq 0$ in \eqref{eq:BoundaryConditions},
because often the prescribed drifts of tectonic plates act as boundary data for the
specific region of interest.    

The basic features of the model include, of course, all the difficulties of the
three-dimensional Navier--Stokes equations such that we cannot expect better
solutions than Leray--Hopf solutions for the velocity component $\Vvel$. 
For $\gamma>0$, the equation \eqref{sys:PDE}$_3$ for the stress tensor $\Stens$
is a (semilinear) parabolic equation with linear source term
$\straintensor(\Vvel)$, but, crucially, also coupled nonlinearly to $\Vvel $ via the
Zaremba--Jaumann derivative $\jaumannder{\Stens}$. 

In our analysis we essentially exploit the fact that for sufficiently smooth
functions $\Vvel$ and $\Stens$ satisfying $\nvec \cdot V=0$ on
$\partial \Omega$ we have the identity
\begin{equation}
	\label{eq:Jaum.Identity}
	\frac{\mathrm d}{\mathrm d t} \int_\Omega \frac12|S(t,x)|^2 \dx = 
	\int_\Omega \partial_t S : S \dx = \int_\Omega \convder S : S \dx =
	\int_\Omega \jaumannder{\Stens} : \Stens \dx.
\end{equation}
Exploiting this identity and assuming for the moment that $\Vvel=0$ on $\partial\Omega$, 
one can show that smooth solutions satisfy the energy-dissipation
balance
\begin{align}
	\nonumber
	&\int_\Omega\Big(\frac\rho2 |V(t)|^2 + \frac{\eta_1}{2\eta_2}|\Stens(t)|^2 \Big) \dx  
	\\
	\label{eq:401}
	&\qquad+ \int_0^t\!\! \int_\Omega\!\big(2\mu|\straintensor(\Vvel)|^2 + \frac{\eta_1}{\eta_2} \Stens
	: \partial \potential(\Stens) + \frac{\eta_1\gamma}{\eta_2} \big| \nabla
	\Stens \big|^2 \Big)  \dx \dtau \\
	\nonumber
	& \qquad\qquad  = \int_\Omega\Big(\frac\rho2 |V_0|^2 +
	\frac{\eta_1}{2\eta_2}|\Stens_0|^2 \Big) \dx  + \int_0^t \int_\Omega \Vvel \cdot F \dx \dtau
\end{align} 
for all $t\in(0,T)$.
For the more general case with nontrivial boundary data, we refer to 
\eqref{eq:GenSol.EDI}.
We clearly see how the quadratic energy consisting of the kinetic energy and an
elastic energy associated with $\Stens$ can be changed by the external force
$F$ and is dissipated by three mechanisms: (i) a direct fluid viscosity given
by $\mu>0$, (ii) a stress dissipation encoded in the dissipation potential
$\potential $, and (iii) the stress diffusion associated with $\gamma>0$. 

To simplify the notation we will fix 
two constants
and choose $\rho=1$ and $\eta_1=\eta_2=\eta$ subsequently. With this choice the
quadratic energy is simply given as one half of the $\LR2$ norm of
$(\Vvel,\Stens)$. 
We should note that this choice of parameters rules out the (non-trivial) case $\eta_1\not=0$ and $\eta_2=0$, in which the system is still fully coupled.
In this case, we need to replace the weight $\frac{\eta_1}{2\eta_2}$ in the energy density appearing in~\eqref{eq:401} by some positive constant, leading to an extra term of the form 
$-\eta_1\int_0^t \int_\Omega S:\nabla\Vvel \dx \dtau$ on the right-hand side of~\eqref{eq:401}. However, as will become clear from the proofs, our existence result can  be extended to this case.

\paragraph{Brief description of the main results}
In this article, we perform a large-data global-in-time existence analysis for
system~\eqref{sys:PDE}--\eqref{eq:InitialConditions} that is valid for the
general class of convex potentials $\potential$ specified above. See
Theorem~\ref{thm:genP} for the main result.  A key novelty with respect to
existing literature is the ability to deal with $\potential$ nonsmooth, in
which case the differential inclusion~\eqrefsub{sys:PDE}{3} cannot be replaced
by an equation. Determining an appropriate notion of solution that allows for a
reasonable existence theory is part of our results (see
Sec.~\ref{ssec:gen.sc}).  The inhomogeneous time-dependent boundary data for
the velocity field introduce additional technicalities.  Here, we adapt a
construction for the incompressible Navier--Stokes equations that allows us to
deduce a global-in-time energy control provided that, in some averaged sense,
the data decay to zero as $t\to\infty$, see
Corollary~\ref{cor:GlobalSolutionsEnergy}.  The proof of the existence result
for nonsmooth potentials relies on approximation by a suitable family of
$C^{1,1}$-smooth potentials with the property that
$\Stens\mapsto \partial \potential(\Stens)$ is monotone and globally Lipschitz
continuous. As long as the potential $\potential$ is smooth, somewhat better
results can be obtained, see Theorem~\ref{thm:existence_weaksol}.

\paragraph{Outline}

In Section~\ref{sec:Preliminaries} we specify the notations used in the
subsequent analysis.  The statements of our main results including core
definitions and hypotheses can be found in Section~\ref{sec:main.results},
which also provides some details on the strategy of the proofs and some
further remarks on the modeling. The proof of
Theorem~\ref{thm:existence_weaksol} concerning smooth potentials is provided in
Section~\ref{sec:ex.smooth}.  Section~\ref{su:GenSolNonsmooth} contains the
proof of our results concerning nonsmooth potentials, including the proof of
our main existence result.



\section{Notations}
\label{sec:Preliminaries}

Here, we specify general notations, definitions and conventions required for
the subsequent analysis.

\paragraph{General notations} 
For two vectors $a,b\in\R^3$ we denote their inner product by
$a\cdot b=a_j b_j$ and their tensor product by $a\otimes b$ with
$\np{a\otimes b}_{jk}=a_jb_k$.  Here and in the following we use Einstein's 
summation convention and implicitly sum over repeated indices from $1$ to $3$.
The inner product of two tensors $A,B\in\R^{3\times3}$ is denoted by
$A:B=A_{jk}B_{jk}$.  Moreover, $A^\transpose$ and $\trace A$ denote the
transpose and the trace of $A$.  We further set
$a\otimes b:A=\np{a\otimes b}:A=a_j A_{jk}b_k$ and, if $C\in\R^{3\times3}$ is a
third tensor, $AB:C=(AB):C=A_{jk}B_{k\ell}C_{j\ell}$.

Usually, $\Omega\subset\R^3$ is a bounded Lipschitz
domain and $T\in(0,\infty]$.  Points $(x,t)$ in the space-time cylinder $\OmegaT$, consist of a
spatial variable $x\in\Omega$ and a time variable $t\in(0,T)$.  For a
sufficiently regular function $\uvel$, we denote its partial derivatives in
time and space by $\pt\uvel$ and $\partial_j\uvel$, $j=1,2,3$, respectively.
The symbols $\grad$ and $\Delta$ denote (spatial) gradient and Laplace
operator.  If $\vvel$ is a vector-valued function, we let
$\Div\vvel=\partial_j\vvel_j$ denote its divergence and set
$\vvel\cdot\grad\uvel=\vvel_j\partial_j\uvel$.  Symmetric and antisymmetric
parts of $\grad\vvel=(\partial_k\vvel_j)$ are given by
\[
\straintensor(\vvel)\coloneqq \half\np{\grad\vvel+\grad\vvel^\transpose}, 
\qquad
\rottensor(\vvel)\coloneqq\half\np{\grad\vvel-\grad\vvel^\transpose},
\]
respectively.
If $\Stens$ is a tensor-valued function,
its divergence $\Div\Stens$ is given by 
$(\Div\Stens)_j=\partial_k\Stens_{jk}$.
If $\Ttens$ is another tensor-valued function,
we define $\grad\Ttens:\grad\Stens=\partial_\ell\Ttens_{jk}\partial_\ell\Stens_{jk}$
and $v\cdot \nabla \Ttens : \Stens = v_j (\partial_j\Ttens_{k\ell})
\Stens_{k\ell}$.

\paragraph{Function spaces} 
Let $k\in\N_0\cup\set{\infty}$ and $A\in\set{\Omega,\overline{\Omega}}$.  Then
the class $\CR{k}\np{A}$ consists of all $k$-times continuously differentiable
(real-valued) functions on $A$, and $\CRc{k}\np{A}$ contains all compactly
supported functions in $\CR{k}\np{A}$.  By $\LR{q}(\Omega)$ with
$q\in[1,\infty]$ we denote the classical Lebesgue spaces with corresponding
norm $\norm{\cdot}_{q}$, and $\HSR{k}(\Omega)$ with $k\in\N$ denotes the
$\LR{2}$-based Sobolev space of order $k$, equipped with the norm
$\norm{\cdot}_{k,2}$.  
Moreover, $\HSRN{1}(\Omega)$ contains all elements of
$\HSR{1}(\Omega)$ with vanishing boundary trace, and
$\HSR{1/2}(\partial\Omega)$ denotes the class of boundary traces of functions
from $\HSR{1}(\Omega)$.  By $\HSR{-1}(\Omega)$ and $\HSR{-1/2}(\partial\Omega)$
we denote the dual spaces of $\HSRN{1}(\Omega)$ and
$\HSR{1/2}(\partial\Omega)$, respectively, where we use the distributional
duality pairing. 

The norm of a Banach space $X$ is denoted by $\norm{\cdot}_{X}$, and the same
symbol is used for the norms of $X^3$ and $X^{3\times3}$. When the dimension is
clear from the context, we simply write $X$ instead of $X^3$ or $X^{3\times3}$.
Moreover, $X'$ denotes the dual space of $X$, and $\CR{1,1}(X)$ is the
set of all continuously Fr\'echet differentiable functions $X\to\R$ with
globally Lipschitz continuous derivative.

For an interval $I\subset\R$, the class $\CR{0}(I;X)$ consists of all
continuous $X$-valued functions, and $\CRweak(I;X)$ consists of all weakly
continuous $X$-valued functions.  The Bochner--Lebesgue spaces of $X$-valued
functions are denoted by $\LR{q}(I;X)$ for $q\in[1,\infty]$, and
$\LRloc{q}(I;X)$ denotes the class of all functions that belong to
$\LR{q}(J;X)$ for all compact subintervals $J\subset I$.  When $I=(0,T)$, we
set $\CR{0}\np{0,T;X}=\CR{0}\np{I;X}$ and
$\LR{q}\np{0,T;X}=\LR{q}\np{I;X}$. For functions $A$ on $\Omega \times I$ we
use the shorthand $A(t) \coloneqq A(\,\cdot\,,t)$. 

We further need spaces of solenoidal vector fields and of symmetric deviatoric
tensor fields.  The corresponding classes of smooth functions on $\Omega$ are
given by
\[
\begin{aligned}
\CRcisigma\np{\Omega}
&\coloneqq\setcl{\varphi\in\CRci(\Omega)^3}{\Div\varphi=0},
\\
\CRidev\np{\overline{\Omega}}
&\coloneqq\setcl{\psi\in\CRi(\overline{\Omega})^{3\times3}}
{\psi=\psi^\transpose,\ \trace\psi=0}.
\end{aligned}
\]
We further set
\[
\begin{aligned}
\CRcisigma\np{\Omega \tim I}
&\coloneqq\setcl{\Phi\in\CRci(\Omega\tim I)^3}{\Div\Phi=0},
\\
\CRcidev\np{\overline{\Omega}\tim I}
&\coloneqq\setcl{\Psi\in\CRci(\overline{\Omega}\tim I)^{3\times3}}
{\Psi=\Psi^\transpose,\ \trace\Psi=0},
\end{aligned}
\]
and we set $\CRidev\np{\overline{\Omega}\tim I}\coloneqq\CRcidev\np{\overline{\Omega}\tim I}$ 
if $I$ is a compact interval.
We define the associated $\LR{2}$ spaces on $\Omega$ by
\[
\begin{aligned}
\LRsigma{2}(\Omega)
&\coloneqq\setcl{\vvel\in\LR{2}(\Omega)^3}{\Div\vvel=0, \ 
\restriction{\vvel}{\partial\Omega}\cdot\nvec=0}
=\closure{\CRcisigma\np{\Omega}}{\norm{\cdot}_{2}},
\\
\LRdev{2}(\Omega)
&\coloneqq\setcl{\Stens\in\LR{2}(\Omega)^{3\times3}}
{\Stens=\Stens^\transpose,\ \trace\Stens=0}
=\closure{\CRidev\np{\overline{\Omega}}}{\norm{\cdot}_{2}}.
\end{aligned}
\]
Here the conditions $\Div\vvel=0$ and $\restriction{\vvel}{\partial\Omega}\cdot\nvec=0$
in the definition of $\LRsigma{2}(\Omega)$
have to be understood in a weak sense; 
see \cite[Theorem III.2.3]{GaldiSteadyStateNS_2011} for example.
We further introduce the corresponding Sobolev spaces
\[
\begin{aligned}
\HSRNsigma{1}(\Omega)
&\coloneqq\setcl{\vvel\in\HSRN{1}(\Omega)^3}{\Div\vvel=0}
=\closure{\CRcisigma\np{\Omega}}{\norm{\cdot}_{1,2}},
\\
\HSRdev{1}(\Omega)
&\coloneqq\setcl{\Stens\in\HSR{1}(\Omega)^{3\times3}}
{\Stens=\Stens^\transpose,\ \trace\Stens=0}
=\closure{\CRidev\np{\overline{\Omega}}}{\norm{\cdot}_{1,2}}.
\end{aligned}
\]
We can now define the solution spaces
\[
\LH
\coloneqq
\LR{\infty}\np{0,T;\LRsigma{2}(\Omega)}
\cap\LR{2}\np{0,T;\HSR{1}(\Omega)^3}
\]
for the fluid velocity and
\[
\XT
\coloneqq
\LR{\infty}\np{0,T;\LRdev{2}(\Omega)}
\cap\LR{2}\np{0,T;\HSR{1}(\Omega)^{3\times3}}
\] 
for the stress tensor.
Observe that $\LH$ is the classical Leray--Hopf class for 
weak solutions to the Navier--Stokes equations,
and $\XT$ is the analog for semilinear parabolic equations taking values in
deviatoric tensor fields.
Moreover, for the introduction of the variational formulation of \eqref{sys:PDE} below, 
the space
\begin{align}\label{eq:testspace}
	\TestFn:=\HSR{1}(0,T';\LR{2}_\delta(\Omega))
	\cap \LR{2}(0,T';\HSR{1}(\Omega)^{3\times3})\cap \LR{5}(0,T';\LR{5}(\Om)^{3\times3})
\end{align}
will serve as the class of test functions.

\paragraph{Convex subdifferential}\label{page:disspot}
Let $\potential\colon\hs\to[0,\infty]$ be convex with $\potential(0)=0$. 
We denote by $\partial\mathcal{P}$ the convex subdifferential of $\mathcal{P}$, i.e., for $S\in \hs$ we let 
\[
  \partial\mathcal{P}(S):=\setcl{\tau\in \hs }{  (\tau,\tilde S -S)_\hs+\mathcal{P}(S)\le
  \mathcal{P}(\tilde S)\;\text{ for all }\tilde S\in \hs }.
\]
Observe that, by definition,
$\partial\mathcal{P}(S)=\emptyset$ if $\potential(\Stens)=+\infty$.  If
$\partial\mathcal{P}(S)=\{\tau\}$ for some $\tau\in \hs$, we identify the set
$\partial\mathcal{P}(S)$ with its unique element $\tau$. In this case, we call
$\tau$ the (G\^ateaux) differential of $\mathcal{P}$ at $S$.

\section{Main results}\label{sec:main.results}

The main contribution of our analysis to the large-data existence theory for
rate-type viscoelastoplastic fluid models lies in its ability to deal with
\textit{nonsmooth} dissipation potentials $\mathcal{P}$ under the mild
hypothesis that
\begin{align}\label{eq:propP}
	\potential\colon\LRdev{2}(\Om)\to[0,\infty] 
	\text{ 
		is convex and lower semicontinuous with $\potential(0)=0$.}
\end{align}
Throughout this paper, we let $T\in(0,\infty]$ and impose the following
conditions on the initial data and external forcing
\begin{equation}\label{el:data}
\begin{aligned}
&\Vvel_0\in\LRsigma{2}(\Omega), \quad
\Stens_0\in\LRdev{2}(\Omega), &&
F=F_0+\Div F_1,
\\
&F_0\in\LRloc{1}\np{[0,T);\LR{2}(\Omega)^3},
&&
F_1\in\LRloc{2}\np{[0,T);\LR{2}(\Omega)^{3\times3}}.
\end{aligned}
\end{equation}
In our main results we focus on boundary data $g$ that satisfy
\begin{equation}\label{el:cond_g}
g\in\LR{\infty}\np{0,T;\HSR{1/2}(\partial\Omega)^3},\quad
\pt g\in\LR{\infty}\np{0,T;\HSR{-1/2}(\partial\Omega)^3}
\end{equation}
and $g\cdot\nvec=0$ on $\partial\Omega\times(0,T)$.

Section \ref{su:Modeling} will be devoted to the motivation for allowing
for nonsmooth and set-valued stress-strain relations $\Stens \mapsto \partial 
\mathcal{P}(\Stens)$ and non-trivial boundary data $g$ by discussing
applications in the geodynamics of lithospheric plate motion.

\subsection{Generalized solution concept}\label{ssec:gen.sc}
For nonsmooth potentials $\mathcal{P}$ the subdifferential $\partial\mathcal{P}$ may be multi-valued, and hence, line~\eqrefsub{sys:PDE}{3} cannot be replaced with an equality but rather has to be understood as a suitable inclusion. 
Our notion of solution for problem~\eqrefsub{sys:PDE}{3} 
adapts the weak solution concept in~\cite[Chapter~10]{Roubicek_2013} for semilinear parabolic equations with nonsmooth potentials, which involves an evolutionary variational inequality that avoids the multi-valued function $\partial \mathcal{P} (S)$
by exploiting the inequality
\begin{align}\label{eq:554}
	\int_\Om\partial \mathcal{P} (S):(\tilde S{-}S)\,\dd x\leq
\mathcal{P} (\tilde S) - \mathcal{P} (S)
\end{align} 
for suitably regular test function $\tilde S$. (If $\partial \mathcal{P} (S)$ is multi-valued, the left-hand side of the last inequality should be understood elementwise, replacing $\partial \mathcal{P} (S)$ by $\beta\in\partial \mathcal{P} (S)$.)
In comparison to~\cite[Chapter~10]{Roubicek_2013}, the analysis of the present problem is, however, greatly complicated by the presence of the geometric nonlinearities in~\eqrefsub{sys:PDE}{3} (coming from the objective derivative $\jaumannder{S}$) as well as the coupling term $\eta_2 D(V)$.

To motivate the evolutionary variational inequality we are going to propose for~\eqrefsub{sys:PDE}{3}, 
let us assume for the moment smoothness of the functions involved and suppose that~\eqrefsub{sys:PDE}{3} holds as an equality.
%
Multiplying this equation by $(\tilde S-S)$ and integrating over $\Om$ then allows us to use ineq.~\eqref{eq:554} and avoid $\partial \mathcal{P} (S)$ at the expense of an inequality.
The integral involving the Zaremba--Jaumann derivative can be modified via the
identity \eqref{eq:Jaum.Identity},
\begin{equation}\label{eq:208}
	\begin{aligned}
\int_\Omega \jaumannder{S} :(\tilde S{-}S) \dx &= \int_\Omega \!\Big(
\partial_t S:(\tilde S{-}S) + \big(\jaumannder{S}{-} \partial_t S):\tilde S
\Big)\dx\\
& = \int_\Omega \!\Big(
\partial_t \tilde S:(\tilde S{-}S) + \big(\jaumannder{S}{-} \partial_t S):\tilde S
\Big)\dx - \frac{\mathrm d}{\mathrm d t} \int_\Omega \half|\tilde S{-}S|^2
\dx ,
\end{aligned}
\end{equation}
where $\jaumannder{S}- \partial_t S= V\cdot \grad S + SW-WS$ 
and where we have used the fact that $\int_\Om (\jaumannder{S}{-} \partial_t S): S
\dx=0$ for $(V,S)$ is sufficiently regular.
 Upon integration in time from $t=0$ to $t=T'<T$, we then arrive at 
\begin{align*}
	\begin{split}
		&\int_0^{T'}\!\!\!\int_\Omega \pt\tilde\Stens:(\tilde S-\Stens)
		+\gamma\grad\Stens:\grad(\tilde S-\Stens)\,\dd x\dd t +
		\int_0^{T'}\!\!\Big(\potential(\tilde\Stens)-\potential(\Stens)\Big)\dt
		\\
		&+\int_0^{T'}\!\!\!\int_\Omega V\cdot\grad\Stens:\tilde S
		+(\Stens\rottensor\np{V}-\rottensor\np{V}\Stens):\tilde S -\eta\straintensor(\Vvel):(\tilde S-\Stens)\dx\dt \\
		&\qquad\qquad\qquad\qquad\qquad\qquad\qquad
		\geq \tfrac{1}{2}\|\tilde S(T')-\Stens(T')\|_{2}^2
		-\tfrac{1}{2}\|\tilde S( 0)-\Stens_0\|_{2}^2.
	\end{split}
\end{align*}
To avoid issues when passing to the limit along approximate solutions, 
we follow~\cite[Chapter 10]{Roubicek_2013} and drop the positive term
$\tfrac{1}{2}\|\tilde S(T')-\Stens(T')\|_{2}^2$ on the right-hand side in our
ultimate variational inequality for~\eqrefsub{sys:PDE}{3} (cf.\
eq.~\eqref{eq:varin} below). 

\begin{defn}[Generalized solution]\label{def:varsol} 
Let $\potential$ satisfy~\eqref{eq:propP}.
  We call a couple $(V,S)$ a \emph{generalized solution to \eqref{sys:PDE}--\eqref{eq:InitialConditions}}
  if for all $T'\in(0,T)$ the following
  holds: $(V,S)\in \LHprime\times\XTprime$ satisfies $\restriction{V}{\partial\Om\times(0,T)}=g$
  as well as
  \begin{align}
  \begin{split}
	&\int_0^T\int_\Omega \bb{
	-\Vvel\cdot\pt\Phi
	-\Vvel\otimes\Vvel:\grad\Phi
	+\eta \Stens:\grad\Phi
	+\mu\grad\Vvel:\grad\Phi
	}\dx\dt
	\\
	&\qquad
	=\int_0^T\int_\Omega F_0\cdot\Phi\dx\dt
	-\int_0^T\int_\Omega F_1:\grad\Phi\dx\dt
	+\int_\Omega\Vvel_0\cdot\Phi(\cdot,0)\dx,
	\end{split}
	\label{eq:weaksol_V}
	  \end{align}
  for all $\Phi\in\CRcisigma(\Om\times[0,T))$
   and 
	  \begin{align}
    \begin{split}
      &\int_0^{T'}\!\!\!\int_\Omega \pt\tilde\Stens:(\tilde S-\Stens)
      +\gamma\grad\Stens:\grad(\tilde S-\Stens)\,\dd x\dd t +
      \int_0^{T'}\!\!\Big(\potential(\tilde\Stens)-\potential(\Stens)\Big)\dt
      \\
      &+\int_0^{T'}\!\!\!\int_\Omega V\cdot\grad\Stens:\tilde S
      +(\Stens\rottensor\np{V}-\rottensor\np{V}\Stens):\tilde S -\eta\straintensor(\Vvel):(\tilde S-\Stens)\dx\dt \\
      &\qquad\qquad\qquad\qquad\qquad\qquad\qquad\qquad\qquad\qquad\qquad
      \geq-\tfrac{1}{2}\|\tilde S( 0)-\Stens_0\|_{2}^2
    \end{split}
    \label{eq:varin}
  \end{align}
  for 
  all $\tilde S \in \TestFn$.
\end{defn}

Observe that \eqref{eq:weaksol_V} is obtained by multiplying \eqrefsub{sys:PDE}{1} 
by the respective test functions 
and formally integrating by parts.
In particular, \eqref{eq:weaksol_V} is  
in accordance with
the notion of weak solutions for the classical Navier--Stokes problem,
since we take divergence-free test functions
and omit the pressure term.
Moreover, it is easy to see that the terms in~\eqref{eq:varin} are well-defined. 
For the integrals involving the nonlinear terms of the Zaremba--Jaumann rate, 
this follows from the Sobolev embedding $\HSR{1}(\Om)\hookrightarrow \LR{6}(\Om)$, the interpolation   
\begin{align}\label{eq:interpol}
	\LR{\infty}(0,T';\LR{2}(\Om))\cap \LR{2}(0,T';\LR{6}(\Om))\hookrightarrow \LR{\frac{10}{3}}(0,T';\LR{\frac{10}{3}}(\Om)),
\end{align}
and the generalized H\"older inequality with inverse exponents $\frac{3}{10}+\frac{1}{2}+\frac{1}{5}=1$.

\begin{rem}  In the formulation~\eqref{eq:varin} it is crucial that in the integral
		involving the convective part, only the term $V\cdot\nabla S:\tilde S$
		occurs, and not $V\cdot\nabla S:(\tilde S-S)$, since under the natural
		regularity hypotheses of~$S$ in Def.~\ref{def:varsol}, integrability of the term $V\cdot\nabla S:S$ is not
		ensured.
\end{rem}

It is worth noting that, despite the absence of the term $\tfrac{1}{2}\|\tilde S(T')-\Stens(T')\|_{2}^2$ in ineq.~\eqref{eq:varin}, generalized solutions in the sense of Def.~\ref{def:varsol} obey  the natural partial energy dissipation inequality for $S$.

\begin{prop}[Partial energy inequality]\label{prop:edin.genP}
	Any generalized solution $(V,S)$ in the sense of Definition~\ref{def:varsol} satisfies the partial energy dissipation inequality
	\begin{equation}\label{eq:edin.S.genP}
		\begin{aligned}
			\half\| S(T')\|_2^2 &+ \gamma \norm{\grad\Stens}_{\LR{2}\np{\Omega\times(0,T')}}^2
			+\int_0^{T'}\!\!\potential(\Stens)\dtau 
			\\
			&\qquad\qquad
			\leq\half\norm{\Stens_0}_2^2
			+\int_0^{T'}\!\!\int_\Omega \eta\straintensor(\Vvel):\Stens\dx\dtau
		\end{aligned}
	\end{equation}
for almost all $T'\in(0,T)$. 
\end{prop}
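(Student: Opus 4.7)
The plan is to test the variational inequality~\eqref{eq:varin} with $\tilde\Stens = \lambda \phi_n$ for $\lambda \in (0,1)$ and a sequence $(\phi_n) \subset \TestFn$ approximating $\Stens$, and then pass to the limit first in $n\to\infty$ and afterwards in $\lambda\to 1^-$. The crucial algebraic observation is that $\int \pt(\lambda\phi_n):(\lambda\phi_n-\Stens)$ equals $\lambda^2 \int \pt\phi_n:\phi_n - \lambda\int\pt\phi_n:\Stens$, producing a coefficient $\tfrac{\lambda(\lambda-1)}{2}<0$ in front of $\|\Stens(T')\|_2^2$. After dividing the resulting inequality by $(1-\lambda)>0$ and letting $\lambda\to 1^-$, this extracts the missing $\tfrac{1}{2}\|\Stens(T')\|_2^2$ on the left of~\eqref{eq:edin.S.genP}, precisely matching the limit $-\tfrac{(1-\lambda)^2}{2}\|\Stens_0\|_2^2$ of the right-hand side of~\eqref{eq:varin}.

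For the approximation I would set $\phi_n := T_n(\Stens_{h_n})$, combining the backward Steklov average $\Stens_h(t) := \tfrac{1}{h}\int_{t-h}^t \Stens(\tau)\dtau$ (with extension $\Stens \equiv \Stens_0$ for $\tau\leq 0$) with the Lipschitz truncation $T_n(A) := \min(1, n/|A|)\,A$. This truncation preserves symmetry and tracelessness, yields $\phi_n \in \LR{\infty}(0,T';\LR{\infty}(\Om)) \subset \LR{5}(\LR{5})$ so that $\phi_n \in \TestFn$, and admits the scalar-potential representation $T_n(A) = \nabla_A G_n(A)$ for the convex $G_n$ given by $\tfrac{1}{2}|A|^2$ for $|A|\leq n$ and $n|A| - \tfrac{n^2}{2}$ otherwise. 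For $h_n \to 0$ suitably coupled to $n \to \infty$, $\phi_n \to \Stens$ in $\LR{\infty}(\LR{2})\cap\LR{2}(\HSR{1})$ and $\phi_n(0) = T_n(\Stens_0) \to \Stens_0$ in $\LRdev{2}(\Om)$.

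The key identifications in passing to the limit are: (i) $\int \pt\phi_n:\phi_n = \tfrac{1}{2}(\|\phi_n(T')\|_2^2 - \|\phi_n(0)\|_2^2) \to \tfrac{1}{2}(\|\Stens(T')\|_2^2 - \|\Stens_0\|_2^2)$ for a.e.\ $T'$ by Lebesgue differentiation; (ii) $\liminf \int \pt\phi_n:\Stens \geq \tfrac{1}{2}(\|\Stens(T')\|_2^2 - \|\Stens_0\|_2^2)$ via the classical Steklov identity
$\int_0^{T'}\!\int_\Om \pt\Stens_h:\Stens\,\dx\dt = \tfrac{1}{2h}\int_0^{T'}\|\Stens(t) - \Stens(t{-}h)\|_2^2\,\dt + \tfrac{1}{2h}\int_0^{T'}\bb{\|\Stens(t)\|_2^2 - \|\Stens(t{-}h)\|_2^2}\,\dt$
(whose first summand is nonnegative and whose second converges to the desired limit), propagated through the truncation via the self-adjointness of $T_n'$ together with the pointwise convergence $T_n'(\Stens_h)\Stens\to\Stens$ and dominated convergence; (iii) the convective and rotation terms vanish in the limit via the potential identity $V\cdot\nabla\Stens:T_n(\Stens) = V\cdot\nabla G_n(\Stens)$ (which integrates to zero by $\Div V = 0$ and $V\cdot\nvec=0$), together with the pointwise cyclic-trace identity $(\Stens\rottensor(V)-\rottensor(V)\Stens):T_n(\Stens) = 0$ and the $\LR{5}(\LR{5})$-convergence $\phi_n \to T_n(\Stens)$ for fixed $n$ afforded by the uniform $\LR{\infty}$-bound; and (iv) $\limsup \int \potential(\lambda\phi_n) \leq \lambda\int \potential(\Stens)$ by convexity with $\potential(0)=0$, Jensen's inequality for the Steklov average, and the pointwise monotonicity $\potential(T_n(A)) \leq \potential(A)$ (automatic for integral-type $\potential$ as in~\eqref{eq:pot.example.intro}). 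The diffusive, elastic and forcing contributions converge in the obvious way.

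Combining these estimates and taking the $\limsup$, one obtains after dividing by $(1-\lambda)>0$ the intermediate inequality
$\tfrac{\lambda}{2}\|\Stens(T')\|_2^2 + \gamma\|\nabla\Stens\|_{\LR{2}(\Om\tim(0,T'))}^2 + \int_0^{T'}\potential(\Stens)\,\dtau \leq \tfrac{1}{2}\|\Stens_0\|_2^2 + \eta\int_0^{T'}\!\int_\Om \straintensor(V):\Stens\,\dx\dtau$
valid for a.e.\ $T'$; sending $\lambda \to 1^-$ then delivers~\eqref{eq:edin.S.genP}. The main technical hurdle will be the construction of the approximation $\phi_n \in \TestFn$: reconciling the $\LR{5}(\LR{5})$-requirement of $\TestFn$ (essential for the integrability of the convective term $\int V\cdot\nabla\Stens:\tilde\Stens$) with the Steklov-type lower bound for $\int \pt\phi_n:\Stens$ (which must survive the nonlinear Lipschitz truncation) and the convexity-based bound on $\potential$.
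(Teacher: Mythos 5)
Your strategy — rescaling the test function by $\lambda<1$ and letting $\lambda\to 1^-$ at the end to extract $\tfrac12\|S(T')\|_2^2$, with the necessary $\LR{5}(\LR{5})$ regularity produced by Lipschitz truncation — is genuinely different from what the paper does, and the $\lambda$-trick is elegant; but as written it has two gaps, one of which is fatal for the generality of the statement.

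The paper's own proof tests~\eqref{eq:varin} with $\tilde S = \eta_\delta S_\kappa$, where $S_\kappa$ is the backward Steklov average and $\eta_\delta$ is a temporal cutoff equal to $0$ on $[0,T'{-}\delta]$ and $1$ at $t=T'$. This single modification kills three difficulties at once as $\delta\to0$: the potential term vanishes (Jensen gives $\int_0^{T'}\potential(\tilde S)\le \delta\kappa^{-1}\int_0^{T'}\potential(S)\to0$); the convective, rotational, diffusive and strain terms all vanish because $\tilde S\to0$ in the relevant norms (note $S_\kappa\in\LR{\infty}(0,T';\HSR{1}(\Omega))\subset\LR{5}(0,T';\LR{5}(\Omega))$ for fixed $\kappa$, so $\eta_\delta S_\kappa$ actually lies in $\TestFn$ and $\to0$ in $\LR{5}(\LR{5})$); and $\tilde S(0)=0$, so the right-hand side stays $-\tfrac12\|S_0\|_2^2$. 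Only the time-derivative term survives, and the iterated limit $\delta\to0$, $\kappa\to0$ identifies it as $-\tfrac12\|S(T')\|_2^2$. Your approach has to control every term at the $\LR{2}$-level rather than make it vanish, which is where the trouble starts.

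The fatal gap is your step (iv). You need $\limsup_n\int\potential(\lambda\phi_n)\le\lambda\int\potential(S)$, and you reduce this to the pointwise truncation monotonicity $\potential(T_n(A))\le\potential(A)$. As you yourself flag, this holds for superposition potentials $\potential(S)=\int_\Omega\mathfrak P(S(x))\,\dd x$, but the standing assumption~\eqref{eq:propP} only requires $\potential:\LRdev{2}(\Om)\to[0,\infty]$ convex, l.s.c.\ with $\potential(0)=0$ — it need not be local. For a non-local convex functional, composing with the pointwise contraction $T_n$ can strictly increase the value, and the bound fails. So your argument proves a weaker statement than Proposition~\ref{prop:edin.genP}. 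The paper's cutoff avoids the issue entirely because it never needs to compare $\potential$ evaluated at a pointwise-modified argument.

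The second, independent gap is step (ii). You claim $\liminf\int_0^{T'}\!\int_\Om\pt\phi_n:S \ge \tfrac12(\|S(T')\|_2^2-\|S_0\|_2^2)$ by ``propagating the Steklov identity through the truncation'' using the symmetry of $T_n'$ and dominated convergence. Writing $\int\pt\phi_n:S=\int\pt S_{h_n}:T_n'(S_{h_n})S=\int\pt S_{h_n}:S-\int\pt S_{h_n}:\big(I-T_n'(S_{h_n})\big)S$, the first term indeed has the Steklov lower bound, but the correction term pairs the difference quotient $\pt S_{h_n}$ — whose $\LR{2}$-norm is not bounded as $h_n\to0$ under the regularity $S\in\XTprime$ — against a quantity that is only ``small'' in measure (supported on $\{|S_{h_n}|>n\}$). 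Dominated convergence does not apply, and the claimed lower bound is not established. Making this rigorous would require a quantitative interplay between $n$ and $h_n$ that you have not supplied; it is not a routine fill-in.

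Finally, a smaller remark: the truncation is not actually needed to land in $\TestFn$, since $S_{h_n}\in\LR{\infty}(0,T';\HSR{1}(\Omega))\subset\LR{5}(0,T';\LR{5}(\Omega))$ already for fixed $h_n>0$. You introduce $T_n$ to give the convective term the divergence structure $V\cdot\nabla G_n(S)$, but the paper's $\eta_\delta$-cutoff eliminates that term at no cost. Given both issues above, I'd recommend adopting the temporal-cutoff device rather than the Lipschitz truncation, or else restricting the proposition to integral-type potentials.
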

Observe that this proposition applies to any solution conforming to Def.~\ref{def:varsol} and
is independent of the approximation scheme chosen in our construction. Its proof will therefore be postponed to Section~\ref{ssec:pf.eni}.

\subsection{Main results}

The main achievement of this article is the following result,
which shows global-in-time existence of generalized solutions
in the sense of Definition \ref{def:varsol}.

\begin{thm}[Existence of generalized solutions
  to~\eqref{sys:PDE}--\eqref{eq:InitialConditions}]\label{thm:genP}
  Let $T\in(0,\infty]$, let $\Omega$ be a bounded domain in $\R^3$ with
  $\CR{1,1}$-boundary, and let $\potential$ satisfy~\eqref{eq:propP}. Let $\Vvel_0$,
  $\Stens_0$ 
  and $F$ be as in \eqref{el:data}, and let $g$ satisfy
  \eqref{el:cond_g} and $g\cdot\nvec=0$ on $\partial\Omega\times(0,T)$.  Then there exists a generalized
  solution $(V,S)$ 
  to \eqref{sys:PDE}--\eqref{eq:InitialConditions}
  in the sense of Definition~\ref{def:varsol}.
  
  Moreover, there exists an extension $\wvel$ of the boundary data $g$ 
  such that $v=V{-}w$ 
  satisfies the following energy-dissipation inequality: 
  For a.a.~$t\in (0,T)$ we have 
  \begin{equation}
\begin{aligned}
&\int_\Om \frac12|v(t)|^2
+\int_0^t \!\!\int_\Om
\mu |\nabla v|^2  \dx \dtau
\\
&
\ \leq\int_\Om \frac12|V_0{-}w(0)|^2 \dx
\\
&\quad
+\int_0^t\! \int_{\Omega}\bb{F_0\cdot\vvel
-(F_1{-}\widetilde F_1):\grad\vvel
+\wvel\otimes\np{\vvel{+}\wvel}:\grad\vvel
-\eta\Stens:\grad\vvel} \dx\dtau,
\end{aligned}
\label{est:EnergyInequality_v_combined}
\end{equation}
where $\widetilde F_1$ is determined by
$\Div \widetilde F_1=\pt\wvel-\mu\Delta\wvel$.
In particular, for a.a.~$t\in(0,T)$ we have the total energy-dissipation inequality
\begin{align}
\nonumber
&\int_\Om \frac12|v(t)|^2 + \frac12|S(t)|^2 \dx + \int_0^t \!\!\int_\Om \Big[
\mu |\nabla v|^2 + \gamma |\nabla S|^2\Big] \dx \dtau
+ \int_0^t \mathcal P(S) \dtau\\ 
\label{eq:GenSol.EDI}
&\quad \leq \int_\Om \frac12|V_0{-}w(0)|^2 + \frac12|S_0|^2 \dx \\
\nonumber
&\qquad + \int_0^t
\!\! \int_\Om \Big[F_0\cdot v - \np{F_1{-}\widetilde F_1}:\nabla v + w\otimes(v{+}w):\nabla v + \eta
D(w): S  \Big] \dx \dtau .
\end{align}
The function $\wvel$ can be chosen such that
$\wvel\in\LR{\infty}\np{0,T;\HSR{1}(\Omega)^3}$,
$\pt \wvel\in\LR{\infty}\np{0,T;\HSR{-1}(\Omega)^3}$,
and such that $\wvel$ satisfies the estimates
\begin{equation}\label{est:w.nonlin.mr}
\int_0^t\int_\Omega
\wvel\otimes\vvel:\grad\vvel\dx\dtau
\leq \frac{\mu}{2} \norm{\grad\vvel}_{\LR{2}\np{\Omegat}}^2,
\end{equation}
and
\begin{equation}\label{est:w.lin}
\begin{aligned}
\norm{\wvel}_{\LR{q}(0,T';\HSR{1}(\Omega))}
&\leq C\norm{g}_{\LR{q}(0,T';\HSR{1/2}(\partial\Omega))},
\\
\norm{\pt\wvel}_{\LR{q}\np{0,T';\HSR{-1}(\Omega)}}
&\leq C\norm{\pt g}_{\LR{q}\np{0,T';\HSR{-1/2}(\partial\Omega)}}
\end{aligned}
\end{equation} 
for all $T'\in(0,T)$ and $q\in[1,\infty]$ and a suitable constant $C=C(q,\Omega,\mu)>0$.
\end{thm}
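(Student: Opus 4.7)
The plan is to reduce the theorem to the smooth case handled by Theorem~\ref{thm:existence_weaksol}. Concretely, I would regularize $\mathcal P$ via Moreau--Yosida approximation
\[
\mathcal P_\varepsilon(S) := \inf_{\widetilde S \in \LRdev{2}(\Omega)} \Big\{\mathcal P(\widetilde S) + \tfrac{1}{2\varepsilon}\|\widetilde S - S\|_2^2\Big\},
\qquad \varepsilon>0.
\]
Each $\mathcal P_\varepsilon$ is convex, $C^{1,1}$, satisfies $\mathcal P_\varepsilon(0)=0$, has a monotone and globally Lipschitz Fréchet derivative, and $\mathcal P_\varepsilon \nearrow \mathcal P$ pointwise; moreover $\mathcal P_\varepsilon$ converges to $\mathcal P$ in the sense of Mosco. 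For each $\varepsilon$, Theorem~\ref{thm:existence_weaksol} supplies a weak solution $(V_\varepsilon,S_\varepsilon)$ with the same data $(V_0,S_0,F,g)$, together with an extension $w$ of $g$ and the associated energy-dissipation inequality.

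\textbf{Boundary extension.} In parallel, I would construct $w$ by the classical Hopf-type lift for the inhomogeneous Navier--Stokes problem: start with a solenoidal extension $w_*$ of $g$ inheriting the regularity from \eqref{el:cond_g}, multiply by a cutoff supported in a thin neighborhood of $\partial\Omega$, and restore solenoidality via a Bogovskii correction. Using Hardy's inequality one tunes the cutoff width so that \eqref{est:w.nonlin.mr} holds, while \eqref{est:w.lin} follows from the linear estimates for the trace lifting and the Bogovskii operator. The auxiliary field $\widetilde F_1$ is then obtained by applying the Bogovskii operator to $\partial_t w - \mu \Delta w$ at the level of the $v$-equation.

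\textbf{Compactness and limit passage — main obstacle.} From \eqref{eq:GenSol.EDI} for $(V_\varepsilon,S_\varepsilon)$ one reads off uniform bounds for $V_\varepsilon$ in $\LH$ and $S_\varepsilon$ in $\XT$; standard duality arguments yield uniform bounds on $\partial_t V_\varepsilon$, $\partial_t S_\varepsilon$ in suitable negative Sobolev spaces. Aubin--Lions gives strong convergence of $V_\varepsilon\to V$ in $\LR{2}(0,T';\LR{2}(\Omega))$ and $S_\varepsilon\to S$ in $\LR{2}(0,T';\LR{p}(\Omega))$ for $p<6$, with weak convergence of the gradients. The Navier--Stokes identity \eqref{eq:weaksol_V} is inherited by the classical Leray--Hopf argument. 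The principal difficulty is the variational inequality \eqref{eq:varin}: one starts from the $\varepsilon$-level identity (valid since $\mathcal P_\varepsilon$ is smooth) and uses convexity, $\partial\mathcal P_\varepsilon(S_\varepsilon):(\widetilde S - S_\varepsilon)\leq \mathcal P_\varepsilon(\widetilde S)-\mathcal P_\varepsilon(S_\varepsilon)$, to eliminate the subdifferential. The geometric terms $V_\varepsilon\!\cdot\!\nabla S_\varepsilon:\widetilde S$ and $(S_\varepsilon W(V_\varepsilon){-}W(V_\varepsilon)S_\varepsilon):\widetilde S$ pass to the limit thanks to the strong convergence of $(V_\varepsilon,S_\varepsilon)$ against the weakly converging factors; the linear parabolic terms use weak convergence and, for $\|\nabla S\|_2^2$, weak lower semicontinuity; Mosco convergence together with Fatou's lemma give
\[
\liminf_{\varepsilon\to 0}\int_0^{T'}\mathcal P_\varepsilon(S_\varepsilon)\,\dt \geq \int_0^{T'}\mathcal P(S)\,\dt,
\qquad
\int_0^{T'}\mathcal P_\varepsilon(\widetilde S)\,\dt \to \int_0^{T'}\mathcal P(\widetilde S)\,\dt.
\]
Combining these yields \eqref{eq:varin}.

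\textbf{Energy inequalities.} Finally, \eqref{est:EnergyInequality_v_combined} is inherited from the $\varepsilon$-level energy balance for $v_\varepsilon = V_\varepsilon - w$ by weak lower semicontinuity of the convex functionals on the left-hand side; adding this to the partial energy inequality for $S$ from Proposition~\ref{prop:edin.genP} (which applies to the limit by construction) produces the total inequality \eqref{eq:GenSol.EDI}. The main technical hurdle throughout is the interplay between the multivaluedness of $\partial\mathcal P$ and the quadratic geometric nonlinearities in the Zaremba--Jaumann term, which is precisely the reason for using the variational-inequality formulation combined with the identity \eqref{eq:208}.
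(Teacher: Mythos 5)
Your overall scaffold (Moreau regularization, Theorem~\ref{thm:existence_weaksol} at level $\varepsilon$, extraction of weak limits, Mosco-type lower semicontinuity of $\int_0^{T'}\mathcal P_\varepsilon(S_\varepsilon)\,\dt$, and assembling the energy inequalities via Proposition~\ref{prop:edin.genP}) matches the paper. But there is a real gap in the compactness step: you claim Aubin--Lions gives strong convergence $S_\varepsilon \to S$ in $\LR{2}(0,T';\LR{p}(\Omega))$, $p<6$, and you rely on this to pass to the limit in the geometric terms $(S_\varepsilon W(V_\varepsilon)-W(V_\varepsilon)S_\varepsilon):\tilde S$ and $D(V_\varepsilon):S_\varepsilon$. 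Aubin--Lions requires an $\varepsilon$-uniform bound on $\partial_t S_\varepsilon$ in a negative Sobolev space, and this is precisely what fails here: the only available estimate for $\partial_t S_\varepsilon$ (Lemma~\ref{lem:approxSol_timederivative}) carries a constant $M_{T',\potential_\varepsilon}$ that depends on the Lipschitz constant of $\partial\potential_\varepsilon$, which blows up like $1/\varepsilon$. The paper is explicit about this: ``we lose the compactness for $S^\ve$ because $\partial\potential_\varepsilon(S^\ve)$ cannot be controlled.'' Consequently $S_\varepsilon$ only converges weakly, and the terms $S_\varepsilon\,\partial_k V_\varepsilon^l$ are products of two weakly convergent sequences.

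The missing ingredient is Lemma~\ref{le:SW(V)}: integrate $\int S^\ve_{ij}\partial_k V^\ve_l\,\psi$ by parts in space, use the fixed Dirichlet trace $V_\varepsilon|_{\partial\Omega}=g$ to control the boundary term, and pass to the limit using strong $V_\varepsilon\to V$ in $\LR2$ together with weak convergence of $S_\varepsilon$ in $\LR2(0,T';\HSR1)$, then undo the integration by parts. This device, not Aubin--Lions for $S_\varepsilon$, is how the paper closes the argument, and it is also why the inhomogeneous boundary data enter the proof in an essential structural way beyond the construction of the lift $w$. Your handling of $V_\varepsilon\cdot\nabla S_\varepsilon:\tilde S$ (strong $V_\varepsilon$ against weak $\nabla S_\varepsilon$) and the remaining linear terms is fine; the boundary-extension construction via Hopf cutoff, Bogovskii correction and Hardy's inequality is compatible with the cited Farwig--Kozono--Sohr extension lemma the paper actually uses.
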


Note that \eqref{eq:GenSol.EDI} directly follows from summation of \eqref{est:EnergyInequality_v_combined}
and \eqref{eq:edin.S.genP},
which holds by Proposition \ref{prop:edin.genP},
and using $S:\grad\vvel=S:\straintensor(\vvel)$.
One readily verifies that all terms in \eqref{eq:GenSol.EDI}
are well defined when $\wvel$ has the stated regularity.
Moreover, for $g=0$ we have $\wvel=0$,
in which case the right-hand side of the 
energy-dissipation inequality \eqref{eq:GenSol.EDI}
simplifies significantly.
Another consequence is the following result,
that gives a class of data
such that the total energy 
stays bounded as $t\to\infty$.

\begin{cor}\label{cor:GlobalSolutionsEnergy}
In the situation of Theorem \ref{thm:genP},
let $T=\infty$ and assume
\[
\begin{aligned}
F_0&\in\LR{1}\np{0,\infty;\LR{2}(\Omega)},
\qquad
F_1\in \LR{2}(0,\infty;\LR{2}(\Omega)),
\\
g&\in\LR{\infty}\np{0,\infty;\HSR{1/2}(\partial\Omega)}
\cap\LR{1}\np{0,\infty;\HSR{1/2}(\partial\Omega)},
\\
\pt g&\in\LR{\infty}\np{0,\infty;\HSR{-1/2}(\partial\Omega)}
\cap\LR{2}\np{0,\infty;\HSR{-1/2}(\partial\Omega)}.
\end{aligned}
\]
Then $\np{\Vvel,\Stens}\in\LH\times\XT$ for $T=\infty$, 
so that the total energy remains bounded as $t\to\infty$.
\end{cor}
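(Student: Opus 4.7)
The plan is to feed the additional integrability assumptions on $F_0$, $F_1$, $g$ and $\pt g$ into the total energy--dissipation inequality \eqref{eq:GenSol.EDI} from Theorem~\ref{thm:genP} and to derive a uniform-in-$t$ bound on the energy of the shifted unknown $v$ and of $S$; since the lift $w$ is globally bounded in the relevant spaces, this will transfer to $V=v+w$ and hence to $\np{V,S}\in\LH\times\XT$ with $T=\infty$.

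First I would exploit the linear bounds \eqref{est:w.lin} with $q=\infty$, $q=1$ and $q=2$, together with the hypotheses on $g$ and $\pt g$, to deduce that the lift $\wvel$ from Theorem~\ref{thm:genP} satisfies $w\in\LR{\infty}(0,\infty;\HSR{1}(\Omega))\cap\LR{1}(0,\infty;\HSR{1}(\Omega))$ and $\pt w\in\LR{\infty}(0,\infty;\HSR{-1}(\Omega))\cap\LR{2}(0,\infty;\HSR{-1}(\Omega))$. Interpolation then gives $w\in\LR{q}(0,\infty;\HSR{1}(\Omega))$ for every $q\in[1,\infty]$, and together with the embedding $\HSR{1}(\Omega)\embeds\LR{6}(\Omega)$ this yields $w\in\LR{4}(0,\infty;\LR{4}(\Omega))$ as well as $\grad w\in\LR{1}(0,\infty;\LR{2}(\Omega))\cap\LR{2}(0,\infty;\LR{2}(\Omega))$. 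A Bogovskii-type choice for $\widetilde F_1$ (satisfying $\Div\widetilde F_1=\pt w-\mu\Delta w$) then places it in $\LR{2}(0,\infty;\LR{2}(\Omega))$.

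Setting $E(T'):=\esssup_{t\in(0,T')}\bp{\tfrac12\|v(t)\|_2^2+\tfrac12\|S(t)\|_2^2}$ and letting $\calD(T')$ denote the full dissipation integral on the left-hand side of \eqref{eq:GenSol.EDI} over $(0,T')$, I would estimate the right-hand side of \eqref{eq:GenSol.EDI} uniformly in $T'\in(0,\infty)$ as follows. The cubic term $\int_0^{T'}\!\int_\Om w\otimes v:\grad v$ is absorbed directly using \eqref{est:w.nonlin.mr}. The terms quadratic in $\grad v$ involving $F_1-\widetilde F_1$ and $w\otimes w$ are treated by Cauchy--Schwarz in space-time and Young's inequality with a small coefficient, using $F_1,\widetilde F_1\in\LR{2}(0,\infty;\LR{2})$ and $w\in\LR{4}(0,\infty;\LR{4})$; their $\grad v$ parts are absorbed into $\calD(T')$. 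The critical terms $\int F_0\cdot v$ and $\int\eta\straintensor(w):S$ are only $\LR{1}$ in time, so for them I would instead use the pointwise-in-time $\LR{2}$ control provided by $E(T')$:
\[
\BBp{\int_0^{T'}\!\!\int_\Om F_0\cdot v\dx\dtau}\leq\sqrt{2E(T')}\,\|F_0\|_{\LR{1}(0,\infty;\LR{2})},
\]
and analogously with $F_0,v$ replaced by $\eta\straintensor(w),S$, invoking $\straintensor(w)\in\LR{1}(0,\infty;\LR{2})$ from the first step.

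Collecting everything yields an inequality of the form $E(T')+c\,\calD(T')\leq K+K'\sqrt{E(T')}$ for a.a.\ $T'\in(0,\infty)$, with constants $K,K'$ depending only on the data and the norms of $w$, not on $T'$. This quadratic inequality in $\sqrt{E(T')}$ is immediately solvable and furnishes uniform-in-$T'$ bounds on both $E(T')$ and $\calD(T')$, whence $v\in\LR{\infty}(0,\infty;\LRsigma{2})\cap\LR{2}(0,\infty;\HSRN{1})$ and $S\in\LR{\infty}(0,\infty;\LRdev{2})\cap\LR{2}(0,\infty;\HSR{1})$; combining this with $V=v+w$ and the first step gives $\np{V,S}\in\LH\times\XT$ for $T=\infty$. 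The main obstacle is precisely the handling of the $\LR{1}$-in-time terms $F_0\cdot v$ and $\straintensor(w):S$, which cannot be absorbed into the $\LR{2}$-in-time dissipation; this mismatch is what forces the quadratic (rather than linear Gronwall-type) closure above.
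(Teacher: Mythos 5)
Your proposal is correct and follows essentially the same route as the paper. The paper also starts from the total energy-dissipation inequality \eqref{eq:GenSol.EDI}, transfers the hypotheses on $g$, $\pt g$ to $w$, $\pt w$ and $\widetilde F_1$ via \eqref{est:w.lin} and interpolation, absorbs the cubic term via \eqref{est:w.nonlin.mr} and the terms quadratic in $\grad v$ into the dissipation, and treats the $\LR 1$-in-time contributions $F_0\cdot v$ and $\eta\straintensor(w):S$ against the $\LR\infty$-in-time $\LR2$-part of the left-hand side; the only cosmetic difference is that the paper phrases this last step as Young's inequality with small parameter plus absorption (referring back to \eqref{est:approxSol_f0v} and \eqref{est:approxSol_dwS}), whereas you write it as a quadratic inequality in $\sqrt{E(T')}$ — the two formulations are equivalent.
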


The proof of both Theorem \ref{thm:genP} and Corollary \ref{cor:GlobalSolutionsEnergy}
will be given in Subsection~\ref{ssec:proof.main}.

\subsection{Strategy of the proof and existence for smooth potentials}\label{ssec:strategy}

\paragraph{Moreau envelope} The proof of  Theorem \ref{thm:genP} is based on approximating the nonsmooth potential $\potential$
by its Moreau envelope 
\begin{equation}
	\label{eq:YosMorEnvel}
	\mathcal{P}_\ve(S)=\inf_{S'\in
		\LRdev{2}(\Omega)}\bigg(\frac{1}{2\ve}\|S-S'\|_{\LRdev{2}(\Omega)}^2+\mathcal{P}(S')\bigg),\quad\ve\in(0,1].
\end{equation}
This regularization preserves the basic properties~\eqref{eq:propP} imposed on our potentials, that is, for each $\ve\in(0,1]$ the regularized potential $\mathcal{P}_\ve:	\LRdev{2}(\Omega)\to[0,\infty)$ is convex and lower semicontinuous with $\potential_\ve(0)=0$. 
Furthermore, each $\mathcal{P}_\ve$ is Fr\'echet differentiable and its differential 
$\partial\mathcal{P}_\ve$ is globally Lip\-schitz continuous 
with Lipschitz constant $1/\varepsilon$
(see~\cite[Section~12.4]{BC_2017} for example).  
Since $\Stens=0$ is a minimum of $\potential_\varepsilon$ and $\potential_\ve(0)=0$, 
the Lipschitz continuity of $\partial\mathcal{P}_\ve$ implies that
\[
\norm{\partial\potential_\varepsilon(\Stens)}_{\LRdev{2}(\Omega)}
=\norm{\partial\potential_\varepsilon(\Stens)-\partial\potential_\varepsilon(0)}_{\LRdev{2}(\Omega)}
\le \varepsilon^{-1}\norm{S}_{\LRdev{2}(\Omega)}.
\]
%
%
\paragraph{Weak solutions for smooth potentials}
In a first step of our analysis, we provide an existence result to system~\eqref{sys:PDE}--\eqref{eq:InitialConditions}
for smooth potentials $\potential\in\CR{1,1}(\LRdev{2}(\Omega))$
allowing us to use a standard concept of weak solutions. 

%
%

\begin{defn}[Weak solutions]\label{def:weaksol}
Let $\potential\in\CR{1,1}(\LRdev{2}(\Omega))$.
We call a couple $\np{\Vvel,\Stens}$ a 
\emph{weak solution to \eqref{sys:PDE}--\eqref{eq:InitialConditions}}
if
$\np{\Vvel,\Stens}\in\LHprime\times\XTprime$
for all $0<T'< T$,
if $\restriction{\Vvel}{\partial\Omega\times(0,T)}=g$,
and if the identities \eqref{eq:weaksol_V} and
\begin{equation}
\begin{aligned}
\int_0^T\!\!\int_\Omega &\bb{
-\Stens:\pt\Psi
+\Vvel\cdot\grad\Stens:\Psi
+\Stens\rottensor\np{\Vvel}:\Psi
-\rottensor\np{\Vvel}\Stens:\Psi
\\
&
+\partial\potential(\Stens):\Psi
+\gamma\grad\Stens:\grad\Psi
-\eta\grad\Vvel:\Psi
}\dx\dt
=\int_\Omega\Stens_0:\Psi(\cdot,0)\dx
\end{aligned}
\label{eq:weaksol_S}
\end{equation}
hold for all $\Phi\in\CRcisigma\np{\OmegaTzero}$ and $\Psi\in\CRcidev(\OmegaTzerocl)$.
\end{defn}


Our existence result for $\CR{1,1}(\LRdev{2}(\Omega))$-smooth potentials states as follows.
\begin{thm}\label{thm:existence_weaksol}
In addition to the hypotheses of Theorem \ref{thm:genP}, assume 
$\potential\in\CR{1,1}(\LRdev{2}(\Omega))$.
Then
there exists a weak solution $\np{\Vvel,\Stens}$ 
to \eqref{sys:PDE}--\eqref{eq:InitialConditions}
in the sense of Definition \ref{def:weaksol}.
This solution is weakly continuous in $\LR{2}(\Omega)$ in the sense that
\begin{equation}
  \label{eq:WeakContinuity_VS}
  \Vvel\in\CRweak(0,T;\LRsigma{2}(\Omega)), 
  \qquad \Stens\in\CRweak(0,T;\LRdev{2}(\Omega)), 
\end{equation}
with $\bp{\Vvel(0),\Stens(0)}=\bp{\Vvel_0,\Stens_0}$. 
Moreover, $\Vvel$ can be decomposed as $\Vvel=\vvel+\wvel$, 
where $\wvel$ is the same extension of $g$ as in Theorem \ref{thm:genP},
such that for all $t\in(0,T)$ we have the partial energy-dissipation inequalities
\eqref{est:EnergyInequality_v_combined} and
\begin{equation}
\begin{aligned}
&\half\norm{\Stens(t)}_2^2
+\gamma\norm{\grad\Stens}_{\LR{2}(\Omegat)}^2
+\int_0^t\int_\Om\partial\potential(\Stens):\Stens\,\dd x\dtau 
\\
&\qquad\qquad\qquad\qquad\qquad\qquad\qquad
\leq\half\norm{\Stens_0}_2^2 
+\int_0^t\int_\Omega \eta\straintensor(\Vvel):\Stens\dx\dtau.
\end{aligned}
\label{est:EnergyInequality_S_combined}
\end{equation}
\end{thm}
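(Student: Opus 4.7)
The plan is first to construct a solenoidal extension $\wvel$ of the boundary datum $g$ with the regularity and smallness claimed in Theorem~\ref{thm:genP}, in particular the Hardy-type estimate \eqref{est:w.nonlin.mr}. This is the classical Hopf--Leray construction: starting from a fixed solenoidal extension with the correct trace, multiply by a cut-off supported in a tubular neighborhood of $\partial\Omega$ of vanishing width, and repair the divergence by a Bogovski\u{i} operator; the Hardy inequality on $\Omega$ then delivers \eqref{est:w.nonlin.mr} provided the neighborhood is chosen sufficiently thin in terms of $\mu$. Setting $\Vvel = \vvel + \wvel$, problem \eqref{sys:PDE}--\eqref{eq:InitialConditions} reduces to an equation for $\vvel\in\LR{2}(0,T';\HSRNsigma{1}(\Omega))$ coupled to the stress equation for $\Stens$, with $F_1$ replaced by $F_1-\widetilde F_1$ where $\operatorname{div}\widetilde F_1=\pt\wvel-\mu\Delta\wvel$.

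\textbf{Step 2: Galerkin approximation.} I would employ a Galerkin scheme using a smooth basis $\{e_k\}\subset\HSRNsigma{1}(\Omega)$ (e.g.\ Stokes eigenfunctions) for $\vvel$ and a smooth basis $\{f_k\}\subset\HSRdev{1}(\Omega)$ (e.g.\ deviatoric Neumann--Laplace eigenfunctions) for $\Stens$, both orthonormal in $\LR{2}$. The Galerkin problem is an ODE in $\mathbb{R}^{2n}$ whose right-hand side is locally Lipschitz; here the hypothesis $\potential\in\CR{1,1}(\LRdev{2}(\Omega))$ is critical, as it guarantees that $\partial\potential$ is globally Lipschitz on $\LRdev{2}(\Omega)$. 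Carathéodory's theorem yields a local-in-time solution, and the a priori bound below extends it to $[0,T']$ for any $T'<T$.

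\textbf{Step 3: A priori bounds and compactness.} Testing the discretized momentum equation with $\vvel_n$ and the stress equation with $\Stens_n$ and summing, one obtains a discrete version of \eqref{eq:GenSol.EDI}. The key cancellations are $\int(\vvel\cdot\nabla\vvel)\cdot\vvel\,\dx=0$, the Jaumann identity \eqref{eq:Jaum.Identity} neutralizing the quadratic Zaremba--Jaumann terms, and $\int\partial\potential(\Stens):\Stens\,\dx\ge 0$ following from convexity and $\potential(0)=0$. The delicate drift term $\int\wvel\otimes\vvel:\nabla\vvel\,\dx$ is absorbed into the viscous dissipation via \eqref{est:w.nonlin.mr}, while Gronwall's lemma controls the remaining lower-order $\wvel$- and $F$-contributions. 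This produces uniform bounds on $(\vvel_n,\Stens_n)$ in $\LR{\infty}(\LR{2})\cap\LR{2}(\HSR{1})$ and, by the Lipschitz bound on $\partial\potential$, on $\partial\potential(\Stens_n)$ in $\LR{\infty}(\LR{2})$. From the Galerkin equations, combined with the interpolation \eqref{eq:interpol} to control the trilinear terms, I derive bounds on $\pt\vvel_n$ and $\pt\Stens_n$ in $\LR{4/3}(0,T';\HSR{-1})$. Aubin--Lions then provides strong convergence of both $\vvel_n$ and $\Stens_n$ in $\LR{2}(0,T';\LR{2}(\Omega))$ along a subsequence.

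\textbf{Step 4: Limit passage and closing arguments.} The main obstacle is to pass to the limit in the geometric nonlinearities $\Vvel_n\cdot\nabla\Stens_n$ and $\Stens_n\rottensor(\Vvel_n)-\rottensor(\Vvel_n)\Stens_n$ arising from the Zaremba--Jaumann derivative: each is a product of two weakly convergent factors, yet the strong $\LR{2}(\LR{2})$ convergence of one factor (respectively $\Vvel_n$ and $\Stens_n$) together with the weak $\LR{2}(\HSR{1})$ convergence of the other suffices to pass to the limit in the duality with a smooth test function. The reaction term $\partial\potential(\Stens_n)\to\partial\potential(\Stens)$ strongly in $\LR{2}(\LR{2})$ by Lipschitz continuity, and the Navier--Stokes nonlinearity $\Vvel_n\otimes\Vvel_n$ is handled by the standard argument. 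This yields \eqref{eq:weaksol_V} and \eqref{eq:weaksol_S}. The partial energy inequalities \eqref{est:EnergyInequality_v_combined} and \eqref{est:EnergyInequality_S_combined} then follow by weak-$*$ lower semicontinuity of the $\LR{2}$-norms and convexity of $S\mapsto\int\partial\potential(S):S\,\dx$. Finally, the bounds on $\pt\Vvel$ and $\pt\Stens$ in a dual Sobolev space, together with the $\LR{\infty}(\LR{2})$ bounds, give the weak continuity \eqref{eq:WeakContinuity_VS}, and attainment of the initial data is obtained by testing \eqref{eq:weaksol_V}, \eqref{eq:weaksol_S} against test functions non-vanishing at $t=0$.
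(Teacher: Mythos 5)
Your proposal follows the same overall strategy as the paper: extend $g$ to a solenoidal vector field $\wvel$ with the key smallness property, set $\Vvel=\vvel+\wvel$, solve the resulting system for $(\vvel,\Stens)$ by Galerkin with a priori bounds, compactness via Aubin--Lions, and a limit passage using strong $\LR{2}$ convergence of $\vvel_n$ and $\Stens_n$ to handle the quadratic terms. The minor differences (Stokes/Neumann-Laplace eigenfunctions versus the Galdi-type bases of Lemmas~\ref{lem:ONBL2sigma}--\ref{lem:ONBL2dev}; a Hopf--Leray cutoff plus Bogovski\u{\i} construction rather than citing \cite{FarwigKozonoSohr_GlobalLHWeakSolNSTimeDependentBdryData}; slightly different exponents for the time-derivative bounds) are all legitimate substitutes.

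One reasoning step in your Step~4 is, however, incorrect as stated: the functional $S\mapsto\int_\Omega\partial\potential(S):S\,\dx$ is \emph{not} convex in general for convex $\potential\in\CR{1,1}$. A simple scalar counterexample is $\mathfrak p(r)=\tfrac12 r^2$ for $r\le 1$ and $\mathfrak p(r)=r-\tfrac12$ for $r>1$: then $r\,\mathfrak p'(r)=r^2$ for $r\le1$ and $=r$ for $r>1$, whose derivative drops from $2$ to $1$ at $r=1$, so it fails to be convex. Thus you cannot invoke weak lower semicontinuity of this term via convexity. Fortunately you do not need to: you have already obtained strong convergence $\Stens_n\to\Stens$ in $\LR{2}(0,T';\LR{2}(\Omega))$, and since $\partial\potential$ is globally Lipschitz with $\partial\potential(0)=0$, both $\Stens_n$ and $\partial\potential(\Stens_n)$ converge strongly in $\LR{2}$, so the product converges in $\LR{1}$ and one obtains the \emph{equality} $\lim_n\int_0^t\int_\Omega\partial\potential(\Stens_n):\Stens_n = \int_0^t\int_\Omega\partial\potential(\Stens):\Stens$. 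This is exactly the argument in the paper (see \eqref{eq:EnergyInequality_potterm_conv}), and it closes the gap. With that correction the proof is sound.
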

Observe that the stress tensor $S$ obtained in Theorem~\ref{thm:existence_weaksol} enjoys better regularity properties than that in Theorem \ref{thm:genP}. 
In particular, we note that the fact that $(V,S)$  satisfies the tensor evolution problem in the weak sense implies an estimate on the time derivative $\partial_tS$ in $\LR{8/7}\np{0,T';\np{\HSR{1}(\Omega)}'}$ (cf.\ Remark~\ref{rem:weaksol_PDEmodified_bounds})
as well as the weak continuity of $t\mapsto S(t)$ in $\LRdev{2}(\Omega)$. 
Further note that 
\begin{equation}\label{est:dPS}
\int_\Omega\partial\potential(\Stens):\Stens\dx
=\int_\Omega\partial\potential(\Stens):(\Stens-0)\dx
\geq \potential(\Stens)-\potential(0)=\potential(\Stens).
\end{equation}
Hence, the partial energy-dissipation inequality \eqref{eq:edin.S.genP}, satisfied by any generalized solution due to Proposition \ref{prop:edin.genP},
may in general be weaker than the corresponding inequality~\eqref{est:EnergyInequality_S_combined}, which the weak solution constructed in Theorem~\ref{thm:existence_weaksol} conform to.

Our construction of solutions for smooth $\potential$ is based on a Galerkin
approximation that is manufactured in such a way that the global energy control in Corollary~\ref{cor:GlobalSolutionsEnergy} holds true for data with suitable time decay.
Exploiting the standard energy estimates and relying on compactness
arguments of Aubin--Lions type for $\Vvel$ and $\Stens$  
allows us to pass to the weak limit even in the nonlinear terms $\Vvel \cdot
\nabla\Stens$ and $\Stens   \rottensor(\Vvel)$. Of course, in the limit the
energy-dissipation balance \eqref{eq:401} will only survive as an
energy-dissipation inequality. 

When approaching the nonsmooth dissipation potentials $\potential$ by their
Moreau envelope $\potential_\ve$, we lose the compactness for
$\Stens^\ve$ because $\partial \potential_\varepsilon(\Stens^\ve)$
cannot be controlled. 
 Nevertheless, we are able to pass to the limit in the
critical terms of the form $\Stens^\ve \nabla \Vvel^\ve$ by integration by
parts and relying on the boundary conditions, see Lemma \ref{le:SW(V)}.

\subsection{Some remarks concerning the modeling}
\label{su:Modeling}


Our work is motivated by the modeling of geophysical flows, where rock is
considered as a fluid flowing on very long time scales with speeds of
millimeters per year, see
\cite{MoDuMu02MCMV,GerYue07RCMM,HerrenGeryaVand2017IRSDF,PHGAV2019SAFG}.  The
aim of those works is to understand the deformations of tectonic plates, the
formation and development of faults, and the simulation of aseismic slipping
processes.  The modeling often concerns a smaller fault area of weaker material
in the domain $\Omega$ that is driven by the more rigid tectonic plates around
the weaker material.  This situation is modeled in our case by prescribing
the velocity $g(t,\cdot)$ along the boundary $\partial \Omega$. 

The important common feature of these geodynamical models is the plasticity
threshold for the shear stress, which is defined 
 in terms of the norm of the deviatoric
stress tensor, namely $|\Stens|\leq \sigma_\text{yield}$, because of our
assumption $\trace (\Stens)=0$. This condition is mathematically formulated 
via dissipation potentials $\mathcal{P}$ of the form
$\potential (S)=\int_\Omega \mathfrak P(S(x))\dx$
with $\mathfrak P$ satisfying
$\mathfrak P( \Stens)=\infty$ for $|\Stens|> \sigma_\text{yield}$.

Indeed the evolution equation \eqref{sys:PDE}$_3$ for $\Stens$ has to be 
invariant under time-dependent changes of the observer (cf.\ \cite{Antm98PUVS})
which means that $\mathfrak P$ has to satisfy 
\[
\mathfrak P(Q\Stens Q^\top ) =  \mathfrak P( \Stens)  \quad \text{for all
} \Stens \in \R^{3\times 3}_\delta \text{ and } Q\in \text{SO}(\R^3).
\] 
Thus, $\mathfrak P$ can only depend on the three invariants of $\Stens \in
\R^{3 \times 3}_\text{sym}$, namely $\trace \Stens$,
$\trace(\Stens^2)-\trace(\Stens)^2$, and $\det\Stens$.  
Since we have restricted our analysis to the case $\trace \Stens=0$ (implying that $\trace(\Stens^2)-\trace(\Stens)^2=\snorm{\Stens}^2$), a very
typical choice is given in the form 
\[
\mathfrak P(\Stens)= \mathfrak p\big(|\Stens|\big) \quad \Longrightarrow \quad
\partial \mathfrak P(\Stens) = \partial \mathfrak p\big(|\Stens|\big)
\frac1{|\Stens|}\, \Stens,
\]
where $\mathfrak p:\R\to [0,\infty]$ is a lower semicontinuous, convex function
with $\mathfrak p(\sigma)=0$ for $\sigma\leq 0$. The choice
$\mathfrak p(\sigma) = \frac a2\sigma^2 $ for
$\sigma\in [0,\sigma_\text{yield}]$ and $\mathfrak p(\sigma)=\infty$ for
$\sigma>\sigma_\text{yield}$ gives the case in \eqref{eq:pot.example.intro}.

More general dissipation potentials may involve $\det \Stens$ as well. Using
that $\Stens\mapsto |\Stens|^6 +b (\det \Stens)^2$ is convex for
$|b|\leq b_*$ (e.g.\ $b_*=4$ suffices) 
and that $\frac{\partial \det
	\Stens}{\partial \Stens_{ij}}=\mathrm{cof}(\Stens)_{ij}$ 
(with $\mathrm{cof}(A)= \det(A)A^{-\top}$ for
invertible matrices) we may consider, for $a_2,a_4,a_6\geq 0$,  
\begin{align*}
\mathfrak P(\Stens)&= \frac{a_2}2 |\Stens|^2+ \frac{a_4}4 |\Stens|^4 +
\frac{a_6}6\big(|\Stens|^6+b(\det\Stens)^2
\big)  \\  \Longrightarrow& \;\;
\partial \mathfrak P(\Stens)= \big(a_2 {+}a_4|\Stens|^2{+} a_6|S|^4\big) S +
\frac{a_6b}3 \det(\Stens)\,\big( \text{cof}(\Stens) + \frac16|\Stens|^2 I\Big),
\end{align*}
where the last term was added to ensure $\partial\mathfrak P(\Stens)\in\R^{3\times 3}_\delta$,
and we used that $\trace \Stens =0$ implies
$\trace\big(\text{cof}(\Stens)\big)=- |\Stens|^2/2$. 
\medskip

In fact, without changing much in our analysis, we could have allowed the internal
stresses $\Stens$ to have a spherical part, i.e.\ we may have considered $\widehat \Stens
= \Stens + \widehat{p}  I \in \R^{3\times 3}_\text{sym}$ with $\Stens \in
\R^{3\times 3}_\delta$ and $\widehat{p}\in \R$. The analysis remains unchanged as
long as we keep the constraint $\Div V=0$ in \eqref{sys:PDE}$_2$.  This would
allow us to consider more general dissipation potentials allowing for a
coupling of $\Stens$ and $\widehat{p}$, e.g.\ in the form 
\[
\mathfrak P(\widehat \Stens) = \left\{ \begin{array}{cl} \frac a2 |\Stens|^2
    +\frac{b}2 \widehat{p}^2  & \text{for } 
    |S|\leq \sigma_\text{yield} + c\widehat{p}, \\ 
\infty & \text{for } |S|> \sigma_\text{yield}+ c\widehat{p},
\end{array}  \right. 
\]
where $a,b,c$ are positive material parameters. 

However, $\widehat{p}$ takes the position of an evolving additional pressure that
should be modeled together with the true pressure taking compressibility of the
fluid into account. Following
\cite{MoDuMu02MCMV,GerYue07RCMM,HerrenGeryaVand2017IRSDF} the
modeling assumption is then to identify $\widehat{p}$ and the pressure $P$ in
\eqref{sys:PDE}$_1$. Moreover, $P$ is then treated as an independent variable
solving an evolution equation like
$\frac1K \mathrm D_t P + \frac1\xi P =-\Div V$ for positive constants $K$ and $\xi$, which replaces
\eqref{sys:PDE}$_2$. The arising model is quite different from ours and will be
studied in subsequent work.

\section{Existence for smooth potentials}\label{sec:ex.smooth}

In this section we focus on the case of a smooth potential 
$\potential\in\CR{1,1}(\LRdev{2}(\Omega))$
and show existence of a weak solution to the system
\eqref{sys:PDE}--\eqref{eq:InitialConditions}
as stated in Theorem \ref{thm:existence_weaksol}.
To this end, we decompose the velocity field $\Vvel$ 
into a suitable extension of the boundary data $g$ 
and a remainder $\vvel$ with homogeneous boundary conditions
such that $\np{\vvel,\Stens}$ satisfies a perturbed system. 
While the existence of $\wvel$ will follow from well-known results,
we show existence of $\np{\vvel,\Stens}$ by a Galerkin method.
In the end, $\np{\Vvel,\Stens}=\np{\vvel+\wvel,\Stens}$
will be a weak solution with the properties asserted in Theorem \ref{thm:existence_weaksol}.

\subsection{Decomposition of the velocity field}
\label{sec:WeakSolPerturbed}

To show existence of a weak solution to the system
\eqref{sys:PDE}--\eqref{eq:InitialConditions}, we decompose the velocity and
pressure fields into two parts, $\Vvel=\vvel+\wvel$ and $\Vpres=\vpres+\wpres$,
where $\np{\wvel,\wpres}$ is a solution to the Stokes initial-value problem
with boundary data $w=g$ 
\begin{equation}
 \label{sys:StokesIVP}
\begin{pdeq}
\pt\wvel-\Div\bp{2\mu\straintensor(\wvel)-\wpres\idmatrix}
&=\widetilde{F} 
&&\tin\OmegaT, 
\\
\Div\wvel
&=0
&&\tin\OmegaT, 
\\
\wvel
&=g
&&\ton\partialOmegaT,
\\
\wvel(\cdot,0)
&=\wvel_0
&&\tin\Omega
\end{pdeq}
\end{equation}
for some functions $\widetilde{F}$ and $\wvel_0$,
and $\vvel$ satisfies the remaining system with homogeneous boundary data $v=0$ 
\begin{equation}
 \label{sys:PDE_perturbed}
\begin{pdeq}
\pt\vvel+\nsnonlin{\np{\vvel{+}\wvel}}-\Div 
 \bp{\eta\Stens+2\mu\straintensor(\vvel)-\vpres\idmatrix}
&=f
&&\tin \OmegaT, \\
\Div\vvel 
&=0
&&\tin \OmegaT, \\
\pt\Stens+\nsnonlinb{\np{\vvel{+}\wvel}}{\Stens}
+\Stens\rottensor\np{\vvel{+}\wvel}-\rottensor\np{\vvel{+}\wvel}\Stens 
\quad&
\\
{}+\partial\potential(\Stens)
-\gamma\Delta\Stens
-\eta \straintensor(\vvel{+}\wvel)
&=0
&&\tin\OmegaT,
\\
\vvel=0, \quad 
\nvec\cdot\grad\Stens
&=0
&&\ton\partialOmegaT,
\\
\vvel(\cdot,0)
=\vvel_0, \quad
\Stens(\cdot,0)
&=\Stens_0
&&\tin\Omega
\end{pdeq}
\end{equation}
with $f= F-\widetilde{F}$ and $\vvel_0=\Vvel_0-\wvel_0$.
In this way, we have decomposed the question of existence of solutions 
to \eqref{sys:PDE}--\eqref{eq:InitialConditions} into two 
separate problems. 

This decomposition method is a
common way to treat inhomogeneous boundary data $g\neq 0$, and was successfully
used to show existence of weak solutions to the classical Navier--Stokes
initial-value problem in different configurations, see
\cite{FarwigGaldiSohr_NewClassWeakSolNSNonhData,
  FarwigKozonoSohr_GlobalWeakSolNSNonzeroBdryCond_2010,
  FarwigKozonoSohr_GlobalLHWeakSolNSTimeDependentBdryData,
  FarwigKozonoSohr_GlobalWeakSolNSNonhBdryDataDiv_2011} for example.
Observe that this decomposition is by no means unique
since the functions $\widetilde F$ and $\wvel_0$ are not determined by 
the original problem \eqref{sys:PDE}--\eqref{eq:InitialConditions}.
This freedom allows us to construct $\wvel$ as a divergence-free extension of 
the boundary data $g$ satisfying~\eqref{est:w.nonlin.mr} and~\eqref{est:w.lin} and to specify $\tilde F, w_0$ afterwards.
%

\subsection{Weak solutions to the modified system}
In this section we prescribe a class of vector fields $\wvel$ such
that the modified system \eqref{sys:PDE_perturbed} has a solution, see
Assumption \ref{asm:extension} below. 
In particular, for the moment
we do not assume that the function $\wvel$ in \eqref{sys:PDE_perturbed} 
is related to some given boundary function $g$.

For showing the existence of weak solutions to the modified system
\eqref{sys:PDE_perturbed} we make the following assumptions. Let
$0<T\leq\infty$ and $\Omega\subset\R^3$ be a bounded Lipschitz domain.  Let
the dissipation potential $\potential$ satisfy \eqref{eq:propP} 
and $\potential\in\CR{1,1}(\LRdev{2}(\Omega))$.  
Similarly to \eqref{el:data}, for the data we
assume
\begin{equation}\label{el:data_perturbed}
\begin{aligned}
&\vvel_0\in\LRsigma{2}(\Omega), \quad
\Stens_0\in\LRdev{2}(\Omega),
&&
f=f_0+\Div f_1,
\\
&f_0\in\LRloc{1}\np{[0,T);\LR{2}(\Omega)^3},
&&
f_1\in\LRloc{2}\np{[0,T);\LR{2}(\Omega)^{3\times3}}.
\end{aligned}
\end{equation}
Moreover, the function $\wvel$ is assumed to 
have the following properties.

\begin{asm}\label{asm:extension}
The function $\wvel$ satisfies
\[
\wvel\in\LRloc{4}\np{[0,T);\LR{4}(\Omega)^3}, 
\qquad
\grad\wvel\in\LRloc{2}\np{[0,T);\LR{2}(\Omega)^{3\times3}},
\]
and one of the following three properties:
\begin{enumerate}[label=(\alph*)]
\item
$\wvel\in\LRloc{s}\np{[0,T);\LR{r}(\Omega)^3}$
for some $r\in(3,\infty)$, $s\in(2,\infty)$
with $\frac{2}{s}+\frac{3}{r}=1$,
\label{item:extension_i}
\item
$\wvel\in\CR{0}\np{0,T;\LR{3}(\Omega)^3}$
with $\norm{\wvel}_{\LR{\infty}\np{0,T;\LR{3}(\Omega)}}\leq\alpha$ for $\alpha>0$ sufficiently small,
\label{item:extension_ii}
\item
$\wvel\in\CR{0}\np{0,T;\LR{3}(\Omega)^3}$
and for all $t\in (0,T)$ we have 
\begin{align}
\forall\, \vvel\in\HSRNsigma{1}(\Omega):\quad
\snormL{\int_\Omega 
\wvel(t)\otimes\vvel:\grad\vvel
\dx}
&\leq\frac{\mu}{2}
\norm{\grad\vvel}_{2}^2,
\label{est:nonlinearterm_smallness_v}
\\
\forall\, \Stens\in\HSRdev{1}(\Omega):\quad
\snormL{\int_\Omega 
\wvel(t)\cdot\grad\Stens:\Stens
\dx}
&\leq\frac{\gamma}{2}
\norm{\Stens}_{1,2}^2,
\label{est:nonlinearterm_smallness_S}
\end{align}
where $\mu$ and $\gamma$ are the constants appearing in \eqref{sys:PDE}. 
\label{item:extension_iii}
\end{enumerate}
\end{asm}
Observe that in our main results in Theorem \ref{thm:genP} and Theorem \ref{thm:existence_weaksol}
we consider boundary data $g$ satisfying \eqref{el:cond_g}.
As we will see in Subsection \ref{subsec:extension},
this suffices to construct an extension $\wvel$ of $g$ 
satisfying Assumption \ref{asm:extension}\ref{item:extension_iii}. 
However, since there is not much difference in the proof,
we shall establish solutions to \eqref{sys:PDE_perturbed} 
in all cases described in Assumption \ref{asm:extension}.

\begin{rem}\label{rem:extension}
  Note that condition \ref{item:extension_i} cannot directly be generalized to
  the case $r=3$, $s=\infty$, which is treated in \ref{item:extension_ii} and
  \ref{item:extension_iii}.  Moreover, the smallness of the extension $\wvel$
  required in \ref{item:extension_ii} naturally transfers to a smallness
  assumption on the associated boundary data $g$.  In contrast, although
  condition \ref{item:extension_iii} is a direct consequence of
  \ref{item:extension_ii}, it does not require such a condition.  As we shall
  see in Subsection \ref{subsec:extension}, 
  we can find an extension $\wvel$ satisfying
  \ref{item:extension_iii} without imposing a smallness assumption on $g$.
\end{rem}

Below, we will show existence to problem
\eqref{sys:PDE_perturbed} in the following sense.  

\begin{defn}\label{def:weaksol_PDEmodified}
We call a couple $\np{\vvel,\Stens}$ a \emph{weak solution to \eqref{sys:PDE_perturbed}}
if
$
\np{\vvel,\Stens}\in\LHprime\times\XTprime
$ 
for all $0<T'< T$,
if $\restriction{\vvel}{\partial\Omega\times(0,T)}=0$
and if the identities
\begin{align}
\begin{split}
&\int_0^T\int_\Omega \bb{
-\vvel\cdot\pt\Phi
-\np{\vvel{+}\wvel}\otimes\np{\vvel{+}\wvel}:\grad\Phi
+\eta \Stens:\grad\Phi
+\mu\grad\vvel:\grad\Phi
}\dx\dt
\\
&\qquad\qquad=\int_0^T\int_\Omega \bb{f_0\cdot\Phi-f_1:\grad\Phi}\dx\dt
+\int_\Omega\vvel_0\cdot\Phi(\cdot,0)\dx,
\end{split}
\label{eq:weaksol_perturbed_v}
\\
\begin{split}
&\int_0^T\int_\Omega \bb{
-\Stens\cdot\pt\Psi
+\np{\vvel{+}\wvel}\cdot\grad\Stens:\Psi
+\big(\Stens\rottensor\np{\vvel{+}\wvel} 
      -\rottensor\np{\vvel{+}\wvel}\Stens\big):\Psi
\\
&\qquad+\partial\potential(\Stens):\Psi
+\gamma\grad\Stens:\grad\Psi
-\eta \straintensor\np{\vvel{+}\wvel}:\Psi
}\dx\dt
=\int_\Omega\Stens_0:\Psi(\cdot,0)\dx
\end{split}
\label{eq:weaksol_perturbed_S}
\end{align}
hold for all $\Phi\in\CRcisigma\np{\OmegaTzero}$ and $\Psi\in\CRcidev(\OmegaTzerocl)$.
\end{defn}

We shall provide a proof of the following existence result.

\begin{thm}\label{thm:existence_PDEmodified}
Let $\vvel_0$, $\Stens_0$ and $f$ be as in \eqref{el:data_perturbed},
let $\potential\in\CR{1,1}(\LRdev{2}(\Omega))$
satisfy \eqref{eq:propP},
and let $\wvel$ be as in Assumption \ref{asm:extension}.
Then there exists a weak solution 
$\np{\vvel,\Stens}$
to \eqref{sys:PDE_perturbed}
in the sense of Definition \ref{def:weaksol_PDEmodified}.
Additionally, this solution is weakly continuous in $\LR{2}(\Omega)$, 
that is, 
\begin{equation}\label{eq:WeakContinuity}
\vvel\in\CRweak(0,T;\LR{2}(\Omega)^3), 
\qquad \Stens\in\CRweak(0,T;\LR{2}(\Omega)^{3\times3}),
\end{equation}
with $\bp{\vvel(0),\Stens(0)}=\bp{\vvel_0,\Stens_0}$,
and it satisfies the energy inequalities
\begin{align}
&\begin{aligned}
&\half\norm{\vvel(t)}_2^2
+\mu\norm{\grad\vvel}_{\LR{2}(\Omegat)}^2
\\
&
\leq\half \norm{\vvel_0}_2^2
+\int_0^t\! \int_{\Omega}\bb{f_0\cdot\vvel
-f_1:\grad\vvel
+\wvel\otimes\np{\vvel{+}\wvel}:\grad\vvel
-\eta\Stens:\grad\vvel} \dx\dtau,
\end{aligned}
\label{est:EnergyInequality_v}
\\
&\begin{aligned}
&\half\norm{\Stens(t)}_2^2
+\gamma\norm{\grad\Stens}_{\LR{2}(\Omegat)}^2
+\int_0^t\int_\Omega\partial\potential(\Stens):\Stens\dx\dtau
\\
&\qquad
\leq\half\norm{\Stens_0}_2^2 
+\int_0^t\int_\Omega \bb{-\wvel\cdot\grad\Stens:\Stens
+\eta\grad\np{\vvel{+}\wvel}:\Stens}\dx\dtau
\end{aligned}
\label{est:EnergyInequality_S}
\end{align}
for all $t\in[0,T)$.
In particular, we conclude the total energy inequality
\begin{equation}
  \label{est:EnergyInequality}
\begin{aligned}
&\half \bp{\norm{\vvel(t)}_{\LR{2}(\Omega)}^2
+\norm{\Stens(t)}_{\LR{2}(\Omega)}^2}
+\mu\norm{\grad\vvel}_{\LR{2}(0,t;\LR{2}(\Omega))}^2
+\gamma\norm{\grad\Stens}_{\LR{2}(0,t;\LR{2}(\Omega))}^2
\\
&+\int_0^t\!\int_\Omega\partial\potential(\Stens):\Stens\dx\dtau 
\leq \half \bp{ \norm{\vvel_0}_{\LR{2}(\Omega)}^2
+ \norm{\Stens_0}_{\LR{2}(\Omega)}^2}
\\
&+\int_0^t\!\int_\Omega 
\bb{f_0\cdot\vvel
-f_1:\grad\vvel
+\wvel\otimes\np{\vvel{+}\wvel}:\grad\vvel
-\wvel\cdot\grad\Stens:\Stens
+\eta\grad\wvel:\Stens}
\dx\dtau
\end{aligned}
\end{equation}
for all $t\in[0,T)$.
\end{thm}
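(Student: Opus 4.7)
}
My plan is to construct $(\vvel,\Stens)$ by a Galerkin approximation and pass to the limit using the structural cancellation between the coupling terms in the two equations. Fix $T'<T$. I would pick countable bases $\{\varphi_k\}\subset\CRcisigma(\Omega)$ dense in $\HSRNsigma{1}(\Omega)\cap\HSR{2}(\Omega)$ and $\{\psi_k\}\subset\CRidev(\overline{\Omega})$ dense in $\HSRdev{1}(\Omega)$ (e.g.\ eigenfunctions of the Stokes and Neumann-Laplace operators on the respective spaces), and look for $\vvel_n=\sum_{k=1}^n\alpha_k^n(t)\varphi_k$, $\Stens_n=\sum_{k=1}^n\beta_k^n(t)\psi_k$ solving the coupled ODE system obtained by testing \eqref{sys:PDE_perturbed} against $\varphi_j$ and $\psi_j$. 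Since $\potential\in\CR{1,1}(\LRdev{2}(\Omega))$, the map $\Stens\mapsto\partial\potential(\Stens)$ is globally Lipschitz on $\LRdev{2}(\Omega)$, so the resulting finite system has locally Lipschitz right-hand side and admits a local solution by the Carath\'eodory theory.

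Next I would derive the \emph{a priori} estimates. Testing the Galerkin velocity equation by $\vvel_n$ and the stress equation by $\Stens_n$, the key identity \eqref{eq:Jaum.Identity} makes the $\rottensor(\vvel_n{+}\wvel)$-terms disappear when paired with $\Stens_n$, and $\int\nsnonlinb{(\vvel_n{+}\wvel)}{\Stens_n}\!:\!\Stens_n\dx=-\int\nsnonlinb{\wvel}{\Stens_n}\!:\!\Stens_n\dx$ by $\Div(\vvel_n{+}\wvel)=0$ and the boundary conditions. This produces the discrete analogues of \eqref{est:EnergyInequality_v} and \eqref{est:EnergyInequality_S}. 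The $\wvel$-dependent nonlinear terms are controlled via Assumption \ref{asm:extension}: case \ref{item:extension_iii} yields the required absorption directly, while in cases \ref{item:extension_i}--\ref{item:extension_ii} one uses H\"older, Sobolev embeddings $\HSR1\embeds\LR6$, the interpolation \eqref{eq:interpol} and Young's inequality (with smallness in case \ref{item:extension_ii}) to absorb them into $\mu\norm{\grad\vvel_n}_2^2$ and $\gamma\norm{\grad\Stens_n}_2^2$. This gives uniform bounds on $(\vvel_n,\Stens_n)$ in $\LHprime\times\XTprime$ and extends the solution globally on $[0,T')$.

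For compactness I would estimate the time derivatives. The velocity equation gives $\pt\vvel_n$ uniformly bounded in $\LR{4/3}(0,T';(\HSRNsigma{1}(\Omega)\cap\HSR2(\Omega))')$ by the standard Navier--Stokes argument (using $\vvel_n\in\LR{10/3}(\LR{10/3})$ from interpolation, the bound $\norm{\partial\potential(\Stens_n)}_2\le L\norm{\Stens_n}_2$, and Assumption \ref{asm:extension}). Analogously, $\pt\Stens_n$ is bounded in $\LR{4/3}(0,T';(\HSRdev{1}(\Omega))')$. By the Aubin--Lions lemma, along a subsequence $\vvel_n\to\vvel$ and $\Stens_n\to\Stens$ strongly in $\LR2(0,T';\LR2(\Omega))$ and weakly in the corresponding spaces, and weakly-$\ast$ in $\LR\infty(\LR2)$. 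Strong convergence handles the quadratic terms $\np{\vvel_n{+}\wvel}\otimes\np{\vvel_n{+}\wvel}$ in the velocity equation. For the Zaremba--Jaumann terms in the stress equation, I would rewrite $\int\nsnonlinb{\np{\vvel_n{+}\wvel}}{\Stens_n}\!:\!\Psi\dx=-\int\Stens_n\!:\!\np{\vvel_n{+}\wvel}\!\cdot\!\grad\Psi\dx$, after which the product of strong-in-$\LR2\LR2$ $\vvel_n$ and weak-in-$\LR2\LR2$ $\Stens_n$ against the smooth test function passes to the limit; the terms $\Stens_n\rottensor\np{\vvel_n{+}\wvel}-\rottensor\np{\vvel_n{+}\wvel}\Stens_n$ pass to the limit as the product of a strongly converging and a weakly converging factor in $\LR2\LR2$. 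Finally, $\partial\potential(\Stens_n)\to\partial\potential(\Stens)$ strongly in $\LR2\LR2$ by Lipschitz continuity of $\partial\potential$.

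The main obstacle is indeed this passage to the limit in the Zaremba--Jaumann terms; the integration-by-parts trick combined with the strong compactness of $\vvel_n$ (the more regular of the two sequences) is the decisive ingredient. Once convergence is established, the partial energy inequalities \eqref{est:EnergyInequality_v}--\eqref{est:EnergyInequality_S} follow from the discrete versions by weak lower semicontinuity of the norms and the dissipation terms, and the fact that the convex function $\potential$ makes $\Stens\mapsto\int\partial\potential(\Stens)\!:\!\Stens\dx$ weakly lower semicontinuous. Adding them and exploiting the symmetry of $\Stens$ so that $\Stens\!:\!\grad\vvel=\Stens\!:\!\straintensor(\vvel)$ yields the cancellation of $\eta\Stens\!:\!\grad\vvel$ and produces \eqref{est:EnergyInequality}. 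Weak continuity \eqref{eq:WeakContinuity} and attainment of initial data follow from the uniform bounds on $(\pt\vvel,\pt\Stens)$ in the dual spaces together with $\vvel\in\LR\infty(\LR2)$, $\Stens\in\LR\infty(\LR2)$ by a standard argument (continuous representative in $\CRweak$).
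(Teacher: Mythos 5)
Your plan matches the paper's proof step by step: Galerkin approximation with divergence-free/deviatoric bases, local existence via Lipschitz ODE theory, discrete energy balances obtained from the cancellations $\int\Stens_n\rottensor:\Stens_n=0$ and $\int(\vvel_n{+}\wvel)\cdot\grad\Stens_n:\Stens_n=\int\wvel\cdot\grad\Stens_n:\Stens_n$, Gronwall under Assumption~\ref{asm:extension}, Aubin--Lions compactness in $\LR2(0,T';\LR2)$ for both $\vvel_n$ and $\Stens_n$, and passage to the limit followed by lower semicontinuity. Two small corrections, neither affecting the viability of the argument: first, the identity you write has the wrong sign --- $\int(\vvel_n{+}\wvel)\cdot\grad\Stens_n:\Stens_n\dx=+\int\wvel\cdot\grad\Stens_n:\Stens_n\dx$ (the $\vvel_n$ contribution vanishes; the term appears with a minus only after it is moved to the other side of the energy balance). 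Second, the claim that $\Stens\mapsto\int_\Omega\partial\potential(\Stens):\Stens\dx$ is weakly lower semicontinuous for a general convex $\CR{1,1}$ potential is not a standard fact and is not needed: because $\Stens_n\to\Stens$ strongly in $\LR2(0,T';\LRdev2(\Omega))$ and $\partial\potential$ is globally Lipschitz with $\partial\potential(0)=0$, the term $\int_0^t\int_\Omega\partial\potential(\Stens_n):\Stens_n$ actually converges to $\int_0^t\int_\Omega\partial\potential(\Stens):\Stens$, which is exactly what the paper uses. (Also, for $\pt\Stens_n$ the cubic term $\int\Vvel_n\cdot\grad\Stens_n:\Psi$ only gives a bound in $\LR{8/7}(0,T';(\HSR1(\Omega))')$, not $\LR{4/3}$, since one factor carries a full gradient in $\LR2$; this is still more than enough for Aubin--Lions.)
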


\subsubsection{Approximate solutions}
\label{subsec:ApproxSol}

First, we construct a sequence of
approximate solutions to \eqref{sys:PDE_perturbed}.
To this end, we introduce suitable basis functions.

\begin{lem}\label{lem:ONBL2sigma}
  There exists a sequence $\np{\varphi_k}\subset\CRcisigma(\Omega)$, which is
  an orthonormal basis of $\LRsigma{2}(\Omega)$, such that for all
  $\Phi\in\CRcisigma(\OmegaTzero)$ and all $\varepsilon>0$ there exist 
  $N\in \N$ and $\gamma_1,\dots,\gamma_N\in\CRc{1}([0,T))$ such that
\begin{equation}\label{est:ONBL2sigma}
\max_{t\in\nb{0,T}} \norml{\sum_{k=1}^N\gamma_k(t)\varphi_k-\Phi}_{\CR{2}(\Omega)}
+\max_{t\in\nb{0,T}} \norml{\sum_{k=1}^N\pt\gamma_k(t)\varphi_k-\pt\Phi}_{\CR{1}(\Omega)}
<\varepsilon.
\end{equation}
\end{lem}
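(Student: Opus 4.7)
The plan is to construct the basis by Gram--Schmidt orthonormalization in $\LRsigma{2}(\Omega)$ of a countable dense subset of $\CRcisigma(\Omega)$ taken in the Fr\'echet topology of test functions, and then to obtain the approximation property for $\Phi$ by separating time and space through a Bernstein-polynomial approximation in $t$ followed by expansion of the resulting spatial profiles in the basis.

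First, $\CRcisigma(\Omega)$ equipped with the natural Fr\'echet topology of uniform convergence of all derivatives on a compact exhaustion of $\Omega$ is separable, so one may pick a countable linearly independent subset $\{e_n\}_{n\in\N}\subset\CRcisigma(\Omega)$ whose span is Fr\'echet-dense. Gram--Schmidt orthonormalization with respect to the $\LR{2}$-inner product yields $\{\varphi_k\}\subset\CRcisigma(\Omega)$ satisfying $\mathrm{span}\{\varphi_1,\dots,\varphi_N\}=\mathrm{span}\{e_1,\dots,e_N\}$ for every $N\in\N$. Since $\CRcisigma(\Omega)$ is dense in $\LRsigma{2}(\Omega)$, $\{\varphi_k\}$ is an orthonormal basis of $\LRsigma{2}(\Omega)$; and the coincidence of the finite-dimensional spans transfers the Fr\'echet density from $\{e_n\}$ to $\bigcup_N\mathrm{span}\{\varphi_1,\dots,\varphi_N\}$, so in particular every $\psi\in\CRcisigma(\Omega)$ can be approximated in $\CR{2}(\Omega)$-norm by finite linear combinations of the $\varphi_k$.

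For a given $\Phi\in\CRcisigma(\OmegaTzero)$, fix a compact $K\compemb\Omega$ and $T_0\in(0,T)$ with $\supp\Phi\subset K\times[0,T_0]$, so that in particular $\Phi(\cdot,T_0)=0$. Viewing $\Phi$ as a $\CR{\infty}$-map from $[0,T_0]$ into the Banach space of $\CR{2}(\overline\Omega)^3$-functions supported in $K$, I would apply the vector-valued Bernstein polynomial
\[
B_n\Phi(\cdot,t)=\sum_{j=0}^n\binom{n}{j}\bp{\tfrac{t}{T_0}}^{j}\bp{1{-}\tfrac{t}{T_0}}^{n-j}\Phi(\cdot,jT_0/n),\qquad t\in[0,T_0];
\]
the classical divided-difference representation of its derivative yields $B_n\Phi\to\Phi$ in $\CR{1}([0,T_0];\CR{2}(\overline\Omega))$ as $n\to\infty$. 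Since $B_n\Phi(\cdot,T_0)=0$, extending by zero and multiplying by a fixed cut-off $\chi\in\CRc{\infty}([0,T))$ with $\chi\equiv 1$ on $[0,T_0]$ produces an expression of the form $\sum_{j=0}^n\beta_{n,j}(t)\psi_{n,j}(x)$, with $\beta_{n,j}\in\CRc{1}([0,T))$ and $\psi_{n,j}=\Phi(\cdot,jT_0/n)\in\CRcisigma(\Omega)$, which approximates $\Phi$ and $\pt\Phi$ uniformly on $[0,T)$ in the $\CR{2}(\Omega)$- and $\CR{1}(\Omega)$-norms, respectively.

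It then remains to approximate each of the finitely many profiles $\psi_{n,j}$ in $\CR{2}(\Omega)$-norm by a finite linear combination of the $\varphi_k$ (using the density from the first step) and to collect terms, which produces $N\in\N$ and $\gamma_1,\dots,\gamma_N\in\CRc{1}([0,T))$ verifying \eqref{est:ONBL2sigma}. The main technical point is the \emph{simultaneous} $\CR{1}$-in-time and $\CR{2}$-in-space approximation by one and the same finite linear combination; Bernstein polynomials are well suited to this because on $\CR{1}$-functions with Banach-space values they converge together with their time derivative, and their explicit polynomial structure in $t$ is compatible with the cut-off needed to enforce compact support in $[0,T)$.
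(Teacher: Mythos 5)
The paper's own ``proof'' is a one-line citation to \cite[Lemma 2.3]{GaldiReviewOnIVP}, so your self-contained construction is a genuinely different (and more informative) route. The two-step scheme --- Gram--Schmidt orthonormalization in $\LRsigma{2}(\Omega)$ of a countable linearly independent subset of $\CRcisigma(\Omega)$ whose finite spans are dense in a topology controlling the $\CR{2}(\overline{\Omega})$-norm, combined with a tensor-product approximation in time via vector-valued Bernstein polynomials --- is a sound way to obtain the required \emph{simultaneous} $\CR{1}$-in-$t$ / $\CR{2}$-in-$x$ approximation.

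Two points in the write-up need repair, though neither is fatal. First, the Fr\'echet topology of uniform convergence of all derivatives on a compact exhaustion of $\Omega$ does \emph{not} control the $\CR{2}(\overline{\Omega})$-norm: an approximant that is close to $\psi$ on a fixed compact $K$ may be large on $\Omega\setminus K$, so Fr\'echet density of the finite spans of $\{\varphi_k\}$ does not imply the claimed $\CR{2}(\overline{\Omega})$-density. The fix is straightforward: since $\overline{\Omega}$ is compact, $\CR{2}(\overline{\Omega})^3$ is separable, so one may simply choose $\{e_n\}$ dense in $\CRcisigma(\Omega)$ with respect to the $\CR{2}(\overline{\Omega})$-norm (alternatively, use the LF-topology of test functions, which constrains supports); Gram--Schmidt then preserves the finite-dimensional spans exactly as you observe. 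Second, extending $B_n\Phi$ by zero past $T_0$ is not automatically $\CR{1}$ in $t$: writing $b_{n,j}(t)=\binom{n}{j}(t/T_0)^j(1-t/T_0)^{n-j}$, the polynomials with $j$ near $n$ do not vanish to second order at $T_0$, so the individual coefficient functions $\beta_{n,j}$ need not lie in $\CRc{1}([0,T))$ after the zero-extension. This is resolved by using the fact that $\supp\Phi$ is compact in $\Omega\times[0,T)$, so one can fix $\delta>0$ and $T_0<T$ with $\Phi(\cdot,t)\equiv 0$ for $t\in[T_0-\delta,T_0]$. Then for $n$ large, each term either has $\psi_{n,j}=\Phi(\cdot,jT_0/n)\equiv 0$ (when $jT_0/n>T_0-\delta$) or has $b_{n,j}$ vanishing at $T_0$ to order $n-j-1\ge n\delta/T_0-1\to\infty$; after discarding the terms with vanishing spatial profile, the remaining $\beta_{n,j}$ extend by zero to $\CR{1}$-functions compactly supported in $[0,T)$, and the auxiliary cut-off $\chi$ is in fact not needed. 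With these two adjustments, your argument is complete and correct.
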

\begin{proof}
See \cite[Lemma 2.3]{GaldiReviewOnIVP}.
\end{proof}

\begin{lem}\label{lem:ONBL2dev}
  There exists a sequence $\np{\psi_k}\subset\CRidev(\overline{\Omega})$,
  which is an orthonormal basis of $\LRdev{2}(\Omega)$, such that for all
  $\Psi\in\CRcidev(\OmegaTzerocl)$ and all $\varepsilon>0$ there exist 
  $N\in \N$ and 
  $\widetilde\gamma_1,\dots,\widetilde\gamma_N\in\CRc{1}([0,T))$ such that
\begin{equation}\label{est:ONBL2dev}
\max_{t\in\nb{0,T}} \norml{\sum_{k=1}^N\widetilde\gamma_k(t)\psi_k-\Psi}_{\CR{2}(\Omega)}
+\max_{t\in\nb{0,T}} \norml{\sum_{k=1}^N\pt\widetilde\gamma_k(t)\psi_k-\pt\Psi}_{\CR{1}(\Omega)}
<\varepsilon.
\end{equation}
\end{lem}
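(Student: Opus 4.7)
The plan is to mimic the proof of Lemma~\ref{lem:ONBL2sigma}, with the space of symmetric trace-free tensor fields $\CRidev(\overline{\Omega})$ taking the role of the solenoidal smooth fields $\CRcisigma(\Omega)$. The basis itself is constructed by Gram--Schmidt orthonormalization. Since $\CRidev(\overline{\Omega})$, equipped with the countable family of seminorms $\psi\mapsto\|\psi\|_{\CR{k}(\overline{\Omega})}$ for $k\in\N_0$, is a separable Fr\'echet space, I can pick a countable dense sequence $\{\tilde\psi_k\}\subset\CRidev(\overline{\Omega})$, discard those elements lying in the linear span of their predecessors (preserving density while achieving linear independence), and orthonormalize in $\LRdev{2}(\Omega)$. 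The resulting $\{\psi_k\}\subset\CRidev(\overline{\Omega})$ satisfies $\mathrm{span}\{\psi_1,\dots,\psi_N\}=\mathrm{span}\{\tilde\psi_1,\dots,\tilde\psi_N\}$ for every $N$ and is total in $\LRdev{2}(\Omega)$, since $\CRidev(\overline{\Omega})$ is dense there.

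For the approximation property~\eqref{est:ONBL2dev}, I would build the finite sum in three layers. Fix $\Psi\in\CRcidev(\OmegaTzerocl)$, supported in $\overline{\Omega}\times[0,T^*]$ with $T^*<T$. First, mollify in time: choose a standard mollifier $\rho_h\in\CRci(\R)$ with $\supp\rho_h\subset[-h,h]$ and set $\Psi_h\coloneqq\Psi\ast_t\rho_h\in\CRcidev(\OmegaTzerocl)$ for $h$ small, so that $\Psi_h\to\Psi$ and $\partial_t\Psi_h\to\partial_t\Psi$ uniformly in $t$ in the norms $\CR{2}(\overline{\Omega})$ and $\CR{1}(\overline{\Omega})$, respectively. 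Second, starting from the integral representation $\Psi_h(t,x)=\int\rho_h(t-s)\Psi(s,x)\,\mathrm{d}s$, discretize by a Riemann sum $\sum_j\gamma_j(t)\Psi(s_j,x)$ with $\gamma_j(t)\coloneqq\rho_h(t-s_j)\,\Delta s\in\CRci([0,T))$ along a partition $\{s_j\}$ of $[0,T^*]$; a crucial observation is that the $t$-derivative of this sum, $\sum_j\gamma_j'(t)\Psi(s_j,x)$ with $\gamma_j'(t)=\rho_h'(t-s_j)\,\Delta s$, is itself a Riemann sum approximating $\partial_t\Psi_h$. Third, use the $\CR{2}(\overline{\Omega})$-density of $\mathrm{span}\{\psi_k\}$ in $\CRidev(\overline{\Omega})$ to replace each of the finitely many $\Psi(\cdot,s_j)$ by a finite linear combination $\sum_{m=1}^{N}c_{j,m}\psi_m$. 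Setting $\widetilde\gamma_m(t)\coloneqq\sum_j c_{j,m}\gamma_j(t)\in\CRci([0,T))$ and combining the three estimates via the triangle inequality yields~\eqref{est:ONBL2dev} once $h$ and $\Delta s$ are chosen small enough and $N$ large enough.

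The hard point will be the simultaneous control of the approximation in the spatial $\CR{2}$ norm and of its time derivative in the spatial $\CR{1}$ norm, uniformly in $t\in[0,T]$. It is resolved by the fact that the three discretization steps decouple cleanly: the mollification error decays in both norms as $h\to 0$ by standard convolution estimates; the Riemann-sum error in $\Psi_h$ and the analogous Riemann-sum error in $\partial_t\Psi_h$ (with kernel $\rho_h'$ in place of $\rho_h$) are both governed by $\Delta s$ via uniform convergence of Riemann sums for the smooth compactly supported integrand $s\mapsto\rho_h^{(j)}(t-s)\Psi(s,x)$; and the spatial projection error on each $\Psi(\cdot,s_j)$ is taken in $\CR{2}(\overline{\Omega})$, which is automatically $\CR{1}(\overline{\Omega})$-small as well. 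Thus a single choice of discretization parameter serves both orders of regularity, and no delicate interplay between $h$ and $\Delta s$ is required.
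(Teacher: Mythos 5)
Your proposal reproduces what the paper outsources to Galdi's Lemma~2.3: construct $\{\psi_k\}$ by Gram--Schmidt from a countable sequence dense in the Fr\'echet topology of $\CRidev(\overline{\Omega})$, then approximate the time-dependent test tensor by stacking a temporal smoothing, a discretization of the resulting convolution into a finite sum of separated terms $\gamma_j(t)\Psi(s_j,\cdot)$, and finally a spatial projection onto $\operatorname{span}\{\psi_1,\dots,\psi_N\}$ in the $\CR{2}(\overline{\Omega})$-norm. The overall architecture is correct and is the intended one.

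One technical step as written does not quite work and needs a fix: you take a \emph{symmetric} mollifier $\rho_h$ with $\supp\rho_h\subset[-h,h]$, but $\Psi\in\CRcidev(\OmegaTzerocl)$ is defined on $[0,T)$ and is in general nonzero at $t=0$. Evaluating $\Psi\ast_t\rho_h$ for $t\in[0,h)$ then requires $\Psi$ at negative times; the tacit zero extension would make $\Psi_h(0)\not\to\Psi(0)$, so the claimed uniform convergence of $\Psi_h$ and $\pt\Psi_h$ to $\Psi$ and $\pt\Psi$ on all of $[0,T]$ fails near $t=0$. Either use a one-sided mollifier $\rho_h$ supported in $[-2h,-h]$, so that $\Psi_h(t)$ only samples $\Psi(s)$ for $s\in[t{+}h,t{+}2h]\subset(0,T)$ when $t\ge0$ and $h$ is small (the resulting $\gamma_j$ restricted to $[0,T)$ still lie in $\CRc{1}([0,T))$), or first extend $\Psi$ smoothly to $\overline{\Omega}\times(-\delta,T)$ (e.g.\ via a Whitney/Seeley extension in time) and mollify the extension. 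With this correction the rest of your argument --- the Riemann-sum discretization, the observation that differentiation in $t$ commutes with it, and the finite spatial projection --- goes through and yields \eqref{est:ONBL2dev}.
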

\begin{proof}
One can follow the proof of \cite[Lemma 2.3]{GaldiReviewOnIVP}.
\end{proof}

\begin{rem}\label{rem:basis}
Observe that $(\varphi_k)$ is a basis of $\HSRNsigma{1}(\Omega)$,
and $(\psi_k)$ is a basis of $\HSRdev{1}(\Omega)$.
To see this, consider $\varphi\in\CRcisigma(\Omega)$ 
and let $\Phi\in\CRcisigma(\OmegaTzero)$
such that $\Phi\np{\cdot,0}=\varphi$.
Let $\varepsilon>0$ and $\gamma_{1},\dots,\gamma_{N}$
as in Lemma \ref{lem:ONBL2sigma}.
Then
\[
\norml{\sum_{k=1}^{N}\gamma_k(0)\varphi_k-\varphi}_{\CR{2}(\Omega)}
\leq
\max_{t\in\nb{0,T}}\norml{\sum_{k=1}^{N}\gamma_k(t)\varphi_k-\Phi(\cdot,t)}_{\CR{2}(\Omega)}
<\varepsilon.
\]
Since $\CRcisigma(\Omega)$ is dense in $\HSRNsigma{1}(\Omega)$
and $\Omega$ is bounded, this shows the claim for $(\varphi_k)$.
Taking $\psi\in\CRidev(\overline{\Omega})$
and $\Psi\in\CRcidev\np{\OmegaTzerocl}$ with $\Psi\np{\cdot,0}=\psi$ instead,
we can use Lemma \ref{lem:ONBL2dev}
to conclude the statement for $(\psi_k)$ in the same way.
\end{rem}

With these bases at hand, we now construct a sequence of approximate solutions 
in the following way.
For $k\in\N$
we call $\np{\vvel,\Stens}$ an approximate solution 
(of order $k$)
if there exist $\alpha_r,\,\beta_r\in\CR{1}(0,T)\cap\CR{0}([0,T))$, $r=1,\dots,k$,
such that
\begin{equation}\label{eq:approxSol_decomp}
\vvel(x,t)=\vvel_k(x,t)=\sum_{r=1}^k\alpha_r(t)\varphi_r(x),\qquad
\Stens(x,t)=\Stens_k(x,t)=\sum_{r=1}^k\beta_r(t)\psi_r(x),
\end{equation}
and for all $\ell\in\set{1,\dots,k}$ the pair $\np{\vvel,\Stens}$ satisfies
\begin{align}
\begin{aligned}
\int_\Omega \bb{
\pt\vvel\cdot\varphi_\ell
-\np{\vvel{+}\wvel}\otimes\np{\vvel{+}\wvel}:\grad\varphi_\ell
&+\eta \Stens:\grad\varphi_\ell
+\mu\grad\vvel:\grad\varphi_\ell
}\dx
\\
&=\int_\Omega 
f_0\cdot
\varphi_\ell\dx
-\int_\Omega 
f_1:\grad\varphi_\ell\dx,
\end{aligned}\label{eq:approxSol_v}
\\
\begin{aligned}
\int_\Omega \bb{
\pt\Stens\cdot\psi_\ell
+\np{\vvel{+}\wvel}\cdot\grad\Stens:\psi_\ell
+\Stens\rottensor\np{\vvel{+}\wvel}:\psi_\ell
-\rottensor\np{\vvel{+}\wvel}\Stens:\psi_\ell
&\\
+\partial\potential(\Stens):\psi_\ell
+\gamma\grad\Stens:\grad\psi_\ell
-\eta\grad\np{\vvel{+}\wvel}:\psi_\ell
&}\dx
=0
\end{aligned}\label{eq:approxSol_S}
\end{align}
in $(0,T)$ and
\begin{equation}
\int_\Omega\vvel(0)\cdot\varphi_\ell\dx
=\int_\Omega\vvel_0\cdot\varphi_\ell\dx,
\qquad
\int_\Omega\Stens(0):\psi_\ell\dx
=\int_\Omega\Stens_0:\psi_\ell\dx.
\label{eq:approxSol_IV}
\end{equation}
The existence of approximate solutions is guaranteed by the following result.

\begin{lem}\label{lem:approxSol_existence}
For all $k\in\N$ there exists an approximate solution 
$\np{\vvel,\Stens}=\np{\vvel_k,\Stens_k}$,
which satisfies the partial energy-dissipation equalities
\begin{subequations}
 \label{eq:ApprSolEDBal}
\begin{align}
&\begin{aligned}
&\half\norm{\vvel(t)}_2^2
+\mu\norm{\grad\vvel}_{\LR{2}(\Omegat)}^2
\\
&
=\half \norm{\vvel(0)}_2^2
+\int_0^t\int_{\Omega}\bb{f_0\cdot\vvel-f_1:\grad\vvel
+\wvel\otimes\np{\vvel{+}\wvel}:\grad\vvel
-\eta\Stens:\grad\vvel}\dx\dtau,
\end{aligned}
\label{eq:approxSol_EnergyEquality_v}
\\
&\begin{aligned}
&\half\norm{\Stens(t)}_2^2
+\gamma\norm{\grad\Stens}_{\LR{2}(\Omegat)}^2
+\int_0^t\int_\Omega\partial\potential(\Stens):\Stens\dx\dtau
\\
&\qquad
=\half\norm{\Stens(0)}_2^2 
+\int_0^t\int_\Omega \bb{-\wvel\cdot\grad\Stens:\Stens
+\eta\grad\np{\vvel{+}\wvel}:\Stens}\dx\dtau
\end{aligned}
\label{eq:approxSol_EnergyEquality_S}
\end{align}
\end{subequations}
for all $t\in(0,T)$.
Moreover, for $0<T'< T$, there exists a constant $M_{T'}>0$, 
only depending on the data and $T'$
but independent of $k$ and $\potential$,
such that
\begin{equation}\label{est:approxSol_uniformbound}
\begin{aligned}
\sup_{t\in(0,T')}\!\! \Bp{  \norm{\vvel(t)}_{2}^2 + \norm{\Stens(t)}_{2}^2 }
+\int_0^{T'}\!\!\!  \Bp{\norm{\vvel(t)}_{1,2}^2 
&+\norm{\Stens(t)}_{1,2}^2 + \potential(\Stens)} \!\dt
 \leq M_{T'},
\end{aligned}
\end{equation}
so that the sequence $\np{\vvel_k,\Stens_k}_{k\in\N}$ is bounded in $\LHprime\times\XTprime$.
\end{lem}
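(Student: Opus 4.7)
\smallskip
\noindent
\emph{Proof proposal.} The plan is to reduce the construction to a Carath\'eodory problem for a system of ODEs in the unknown coefficients $(\alpha_r, \beta_r)_{r=1}^k$, then derive the energy identities by testing against the solutions themselves, and finally extract uniform bounds via the cancellation structure together with Assumption~\ref{asm:extension}.

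First, I would substitute the ansatz~\eqref{eq:approxSol_decomp} into \eqref{eq:approxSol_v}--\eqref{eq:approxSol_S} and read off a first-order ODE system for $(\alpha, \beta) \in \R^{2k}$. The nonlinearities arising from the quadratic transport and rotation terms are polynomial in $(\alpha,\beta)$ and hence locally Lipschitz; the term $\partial\potential(S_k)$ contributes a globally Lipschitz map because $\potential\in\CR{1,1}(\LRdev{2}(\Omega))$; the coefficients involving $f_0, f_1, w$ are merely $\LRloc{1}$ or $\LRloc{2}$ in $t$. I would therefore invoke a Carath\'eodory-type existence theorem to obtain a local absolutely continuous solution on some $[0,t_k)$, with initial values~\eqref{eq:approxSol_IV} realized by the orthogonal projections of $V_0, S_0$ onto $\vecspan(\varphi_1, \dots, \varphi_k)$ and $\vecspan(\psi_1,\dots,\psi_k)$.

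Next, I would derive~\eqref{eq:approxSol_EnergyEquality_v} and \eqref{eq:approxSol_EnergyEquality_S} by multiplying \eqref{eq:approxSol_v} by $\alpha_\ell$ and \eqref{eq:approxSol_S} by $\beta_\ell$, summing over $\ell=1,\dots,k$, and integrating in time; this is legitimate because $v_k$ and $S_k$ themselves lie in the respective finite-dimensional test spaces. The key algebraic cancellations I would use are: (i) $\int_\Omega v\otimes v:\grad v=\int_\Omega v\otimes w:\grad v=0$, which follow from $\Div v=\Div w=0$ and $v|_{\partial\Omega}=0$, leaving exactly the residual $\int_\Omega w\otimes(v{+}w):\grad v$; (ii) $\int_\Omega v\cdot\grad S:S=\tfrac12\int_\Omega v\cdot\grad|S|^2=0$ for the same reason; and (iii) $\int_\Omega(SW(v{+}w)-W(v{+}w)S):S=0$, which is a consequence of $S=S^\transpose$, $W(\cdot)^\transpose=-W(\cdot)$ and the cyclic property of the trace, namely $\trace(SWS)=\trace(WS^2)=\trace(WSS)$.

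Finally, for the uniform bound \eqref{est:approxSol_uniformbound}, I would add the two energy equalities; the cross terms $-\eta\int S:\grad v$ and $\eta\int\grad v:S$ cancel, leaving only $\eta\int\grad w:S$ on the right-hand side. The residual velocity and stress convective contributions $\int_0^t\!\int_\Omega w\otimes(v{+}w):\grad v$ and $\int_0^t\!\int_\Omega w\cdot\grad S:S$ are then absorbed into $\mu\|\grad v\|_2^2$ and $\gamma\|\grad S\|_2^2$ using Assumption~\ref{asm:extension}: case~\ref{item:extension_iii} is immediate from \eqref{est:nonlinearterm_smallness_v}--\eqref{est:nonlinearterm_smallness_S}, whereas cases~\ref{item:extension_i} and \ref{item:extension_ii} require a standard H\"older/interpolation estimate of the form $\|w\,u\|_2\lesssim\|w\|_r\|u\|_{2r/(r-2)}$ combined with Sobolev embedding and Young's inequality. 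Combining this with $\int_\Omega\partial\potential(S):S\geq\potential(S)-\potential(0)=\potential(S)\geq 0$ (by convexity and $\potential(0)=0$) and Gronwall's inequality yields a bound $M_{T'}$ depending only on the data and $T'$. This in particular prevents blow-up of $(\alpha,\beta)$, so the local ODE solution extends up to any $T'<T$. The main technical obstacle is the uniform absorption of the $w$-dependent terms across all three cases of Assumption~\ref{asm:extension}; independence of $M_{T'}$ from $\potential$ will follow because $\potential$ enters the estimate only via the nonnegative dissipation $\int\partial\potential(S):S$, which we drop when closing the Gronwall argument.
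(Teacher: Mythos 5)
Your proposal is correct and follows essentially the same route as the paper: Galerkin reduction to an ODE system for $(\alpha,\beta)$, derivation of the two energy equalities by testing with $v_k$ and $S_k$ and exploiting the transport/rotation cancellations, and then a Gronwall argument to obtain $M_{T'}$ independent of $k$ and $\potential$. One small improvement over the paper's phrasing: invoking a Carath\'eodory-type existence theorem (rather than Picard--Lindel\"of, as the paper states) is indeed the more appropriate tool here, since the ODE right-hand side inherits only $\LRloc{1}$/$\LRloc{2}$ time regularity from $f_0,f_1,w$; the paper's Lipschitz constant is in the state variables only, and the coefficients are then absolutely continuous rather than $\CR{1}$ in time. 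Your explicit remark that $(SW(V)-W(V)S):S=0$ via the cyclic trace and the symmetry/antisymmetry of $S$ and $W$ is also a welcome clarification; the paper omits it, though it is needed to arrive at \eqref{eq:approxSol_EnergyEquality_S}. One point worth making more precise: in cases \ref{item:extension_i} and \ref{item:extension_ii} the $w$-dependent convective terms are not absorbed outright into $\mu\|\grad v\|_2^2$ and $\gamma\|\grad S\|_2^2$; after the H\"older/Gagliardo--Nirenberg interpolation one is left with a term of the form $\int_0^t(\|w\|_r^s+\|w\|_r^2)(\|v\|_2^2+\|S\|_2^2)\,\dtau$, and it is precisely this term that forces the Gronwall step (which you do invoke at the end, so this is a matter of ordering rather than a gap).
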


\begin{rem}\label{rem:approxSol_totalEnergy}
  The energy balances of the kinetic energy and of the stored elastic energy
  are expressed in \eqref{eq:ApprSolEDBal} in two separate equations.
  By summation we obtain the total energy equality
\begin{equation}\label{eq:approxSol_EnergyEquality}
\begin{aligned}
&\half\norm{\vvel(t)}_2^2
+\half\norm{\Stens(t)}_2^2
+\mu\norm{\grad\vvel}_{\LR{2}(\Omegat)}^2
+\gamma\norm{\grad\Stens}_{\LR{2}(\Omegat)}^2
\\
&+\int_0^t\int_\Omega\partial\potential(\Stens):\Stens\dx\dtau
=\half \norm{\vvel(0)}_2^2+\half\norm{\Stens(0)}_2^2 
\\
&+\int_0^t\int_\Omega \bb{f_0\cdot\vvel-f_1:\grad\vvel
+\wvel\otimes\np{\vvel{+}\wvel}:\grad\vvel-\wvel\cdot\grad\Stens:\Stens
+\eta\grad\wvel:\Stens}\dx\dtau.
\end{aligned}
\end{equation}
\end{rem}

\begin{proof}
We reduce the equations \eqref{eq:approxSol_v}--\eqref{eq:approxSol_IV} to
an initial-value problem for the coefficient function 
$\np{\alpha,\beta}=\np{\alpha_1,\dots,\alpha_k,\beta_1,\dots,\beta_k}$. 
Let $\np{\cdot}^\prime=\ddt$ denote the time derivative. 
Due to orthogonality properties of the two bases $\np{\varphi_k}$ and $\np{\psi_k}$,
we then obtain
\begin{equation}\label{sys:coeffIVP}
\begin{aligned}
\alpha^\prime(t) &= F^1(\alpha(t),\beta(t),t), 
&
\beta^\prime(t) &=F^2(\alpha(t),\beta(t)), 
\\
\alpha_\ell(0) &= \int_\Omega \vvel_0\cdot\varphi_\ell\dx,
&
\beta_\ell(0) &= \int_\Omega\Stens : \psi_\ell\dx 
\qquad 
\np{\ell=1,\dots,k},
\end{aligned}
\end{equation}
where $F^j=(F^j_1,\dots,F^j_k)$, $j=1,2$, with
\[
\begin{aligned}
F^1_\ell(\alpha,\beta,t)
&=\sum_{r,s=1}^n\alpha_r\alpha_s 
\int_\Omega\varphi_r\otimes\varphi_s:\grad\varphi_\ell\dx
\\
&\quad
+\sum_{r=1}^k\alpha_r\int_\Omega \bp{\wvel\otimes\varphi_r+\varphi_r\otimes\wvel}:\grad\varphi_\ell\dx
+\int_\Omega \wvel\cdot\grad\wvel\cdot\varphi_\ell\dx
\\
&\quad
-\eta\sum_{r=1}^k\beta_r\int_\Omega\psi_r:\grad\varphi_\ell\dx
-\mu\sum_{r=1}^k\alpha_r\int\nabla\varphi_r:\nabla\varphi_\ell\dx
\\
&\quad
+\int_\Omega 
f_0(\cdot,t)\cdot\varphi_\ell
-f_1(\cdot,t):\grad\varphi_\ell\dx,
\end{aligned}
\]
and
\[
\begin{aligned}
&F^2_\ell(\alpha,\beta)
=-\sum_{r,s=1}^k\alpha_r\beta_s\int\varphi_r\cdot\grad\psi_s:\psi_\ell\dx
-\sum_{r=1}^k\beta_r\int\wvel\cdot\grad\psi_s:\psi_\ell\dx 
\\
&\qquad
-\sum_{r,s=1}^k\beta_r\alpha_s\int_\Omega
\bb{\psi_r\rottensor\np{\varphi_r}-\rottensor\np{\varphi_r}\psi_s}:\psi_\ell\dx
\\
&\qquad
-\int_\Omega\bb{\psi_r\rottensor\np{\wvel}-\rottensor\np{\wvel}\psi_s}:\psi_\ell\dx
\\
&\qquad
-\int_\Omega\partial\potential(\sum_{r=1}^k\alpha_r\psi_r):\psi_\ell\dx
-\gamma\sum_{r=1}^k\beta_r\int_\Omega\grad\psi_r:\grad\psi_\ell\dx 
\\
&\qquad
+\eta\sum_{r=1}^k\alpha_r\int_\Omega\grad\varphi_r:\psi_\ell\dx
+\eta\int_\Omega\grad\wvel:\psi_\ell\dx
\end{aligned}
\]
for $\ell=1,\dots,k$.
In particular, since $\partial\potential$ is Lipschitz continuous by assumption,
\eqref{sys:coeffIVP} 
is an initial-value problem with right-hand side $F=(F^1,F^2)$
that satisfies a local Lipschitz condition.
By the Picard--Lindel\"of theorem we thus obtain a
unique local solution $(\alpha,\beta)$ to \eqref{sys:coeffIVP}.
Let $(0,T_k)\subset(0,T)$ denote its maximal existence interval. 
Then $\np{\vvel,\Stens}$ defined by \eqref{eq:approxSol_decomp}
satisfy \eqref{eq:approxSol_v}--\eqref{eq:approxSol_S} in $(0,T_k)$
and \eqref{eq:approxSol_IV}.

To conclude the energy equalities \eqref{eq:approxSol_EnergyEquality_v} and
\eqref{eq:approxSol_EnergyEquality_S} for all $t\in(0,T_k)$, we multiply
\eqref{eq:approxSol_v} by $\alpha_\ell$ and \eqref{eq:approxSol_S} by
$\beta_\ell$, sum over $\ell=1,\dots,k$ and integrate over the time interval
$(0,t)$.  This leads to the two equalities \eqref{eq:ApprSolEDBal} by 
employing the identities
\[
\begin{aligned}
\int_\Omega\vvel\otimes\np{\vvel{+}\wvel}:\grad\vvel\dx
&=\half\int_\Omega\np{\vvel{+}\wvel}\cdot\grad\snorm{\vvel}^2\dx
=\half\int_{\partial\Omega}\np{\vvel{+}\wvel}\cdot\nvec\snorm{\vvel}^2\,\dsigma
=0, \\
\int_\Omega\vvel\cdot\grad\Stens:\Stens\dx
&=\half\int_\Omega\vvel\cdot\grad\snorm{\Stens}^2\dx
=\half\int_{\partial\Omega}\vvel\cdot\nvec\snorm{\Stens}^2\,\dsigma
=0,
\end{aligned}
\] 
which hold due to $\vvel=0$ on $\partial\OmegaTk$.

Next we show that $T_k=T$.
To this end, we add \eqref{eq:approxSol_EnergyEquality_v} and 
\eqref{eq:approxSol_EnergyEquality_S} to obtain \eqref{eq:approxSol_EnergyEquality}
and further estimate the right-hand side 
of \eqref{eq:approxSol_EnergyEquality} for $t\in(0,T_k)$. 
The initial terms can be estimated with
Bessel's inequality as
\begin{equation}\label{est:approxSol_IV}
\norm{\vvel(0)}_2^2\leq\norm{\vvel_0}_{2}^2,
\qquad
\norm{\Stens(0)}_2^2\leq\norm{\Stens_0}_{2}^2.
\end{equation}
For the linear terms,
we employ a combination of H\"older's and Young's inequalities to obtain
\begin{align}
\int_0^t\int_\Omega f_0\cdot\vvel\dx\dtau
&\leq \Cc{c}(\varepsilon) \norm{f_0}_{\LR{1}(0,t;\LR{2}(\Omega))}^2 
+ \varepsilon \norm{\vvel}_{\LR{\infty}(0,t;\LR{2}(\Omega))}^2,
\label{est:approxSol_f0v}
\\
\int_0^t\int_\Omega f_1:\grad\vvel\dx\dtau
&\leq \Cc{c}(\varepsilon) \norm{f_1}_{\LR{2}(\Omegat)}^2 
+ \varepsilon \norm{\grad\vvel}_{\LR{2}(\Omegat)}^2,
\label{est:approxSol_f1v}
\\
\int_0^t\int_\Omega \grad\wvel:\Stens\dx\dtau
&\leq \Cc{c}(\varepsilon) \norm{\grad\wvel}_{\LR{1}(0,t;\LR{2}(\Omega))}^2 
+ \varepsilon \norm{\Stens}_{\LR{\infty}(0,t;\LR{2}(\Omega))}^2,
\label{est:approxSol_dwS}
\\
\int_0^t\int_\Omega \np{\wvel\otimes\wvel}:\grad\vvel\dx\dtau
&\leq \Cc{c}(\varepsilon) \norm{\wvel}_{\LR{4}(\Omegat)}^4 
+ \varepsilon \norm{\grad\vvel}_{\LR{2}(\Omegat)}^2
\label{est:approxSol_wwdv}
\end{align}
for any $\varepsilon>0$.
Next we address the nonlinear terms, where we need to discuss the
different cases in Assumption \ref{asm:extension}. 

\underline{Case $s<\infty$:} We use part (a) of Assumption \ref{asm:extension}
with $r>3$ and define $p\in (2,6)$ via $1/p=1/2-1/r$. With
$\theta=3/2-3/p=3/r=1-2/s \in (0,1)$ the Gagliardo--Nirenberg inequality gives
\[
\begin{aligned}
\norm{\vvel(t)}_p
&\leq \Cc{c}\norm{\vvel(t)}_2^{1-\theta}\norm{\grad\vvel(t)}_2^{\theta},
\\
\norm{\Stens(t)}_p
&\leq \Cc{c}\norm{\Stens(t)}_2^{1-\theta}\norm{\grad\Stens(t)}_2^{\theta}
+\Cc{c}\norm{\Stens(t)}_2
\end{aligned}
\]
for all $t\in[0,T_k]$,
so that H\"older's and Young's inequalities lead to
\begin{align}
\begin{split}
&\int_0^t\int_\Omega
\wvel\otimes\vvel:\grad\vvel\dx\dtau
\leq\int_0^t \norm{\wvel}_{r}\norm{\vvel}_{p}\norm{\grad\vvel}_{2}\dtau
\\
&\qquad\leq\Cc{c}\int_0^t \norm{\wvel}_{r}\norm{\vvel}_{2}^{1-\theta}\norm{\grad\vvel}_{2}^{1+\theta}\dtau 
\\
&\qquad\leq \varepsilon \norm{\grad\vvel}_{\LR{2}\np{\Omegat}}^2
+\Cc{c}(\varepsilon)\int_0^t \norm{\wvel}_{r}^s\norm{\vvel}_{2}^2\dtau,
\end{split}
\label{est:approxSol_wvdv}
\\
\begin{split}
&\int_0^t\int_\Omega\wvel\cdot\grad\Stens:\Stens\dx\dtau
\leq\int_0^t\norm{\wvel}_{r}\norm{\grad\Stens}_{2}\norm{\Stens}_{p}\dtau 
\\
&\qquad\leq\Cc{c}\int_0^t\norm{\wvel}_{r}\Bp{\norm{\grad\Stens}_{2}^{1+\theta}\norm{\Stens}_{2}^{1-\theta}
+\norm{\grad\Stens}_{2}\norm{\Stens}_{2}}\dtau
\\
&\qquad\leq\varepsilon\norm{\grad\Stens}_{\LR{2}\np{\Omegat}}^2
+\Cc{c}(\varepsilon)\int_0^t\bp{\norm{\wvel}_{r}^s+\norm{\wvel}_{r}^2}\norm{\Stens}_{2}^2\dtau.
\end{split}
\label{est:approxSol_wdSS}
\end{align}
Let now $t'\in(0,T_k)$ be arbitrary. 
Using \eqref{est:approxSol_IV}--\eqref{est:approxSol_wdSS}
and choosing $\varepsilon=\frac{1}{6}$,
for $t\in[0,t']$ we may then estimate the right-hand side of~\eqref{eq:approxSol_EnergyEquality} as
\[
\begin{aligned}
	&\norm{\vvel(t)}_{\LR{2}(\Omega)}^2
	+\norm{\Stens(t)}_{\LR{2}(\Omega)}^2
	+\norm{\grad\vvel}_{\LR{2}(\Omegat)}^2
	+\norm{\grad\Stens}_{\LR{2}(\Omegat)}^2
	\\
	&\qquad\qquad\qquad\hspace{5cm}
	+\int_0^t\int_\Omega\partial\potential(\Stens):\Stens\,
	\dx\dtau
	\\
	&
	\leq\Cc{c}\Bp{
		\norm{\vvel_0}_2^2+\norm{\Stens_0}_2^2 
		+\norm{f_0}_{\LR{1}(0,t;\LR{2}(\Omega))}^2
		+\norm{f_1}_{\LR{2}(\Omegat)}^2 
		+\norm{\grad\wvel}_{\LR{1}(0,t;\LR{2}(\Omega))}^2
		\\
		&\qquad\qquad\qquad
		+\norm{\wvel}_{\LR{4}(\Omegat)}^4
		+\int_0^t \bp{
			\norm{\wvel}_{r}^s
			+\norm{\wvel}_{r}^2}
		\bp{\norm{\vvel}_{2}^2+\norm{\Stens}_{2}^2}\dtau
	}
\\&\; +\frac{1}{2} \big(
\norm{\vvel}_{\LR{\infty}(0,t';\LR{2}(\Omega))}^2
{+}\norm{\Stens}_{\LR{\infty}(0,t';\LR{2}(\Omega))}^2
{+}\norm{\grad\vvel}_{\LR{2}\np{\Omega\times(0,t')}}^2
{+}\norm{\grad\Stens}_{\LR{2}\np{\Omega\times(0,t')}}^2
\big).
\end{aligned}
\]
We now take the supremum over $t\in[0,t']$ and 
make use of the elementary inequality $\sum_{i=1}^M\sup_t N_i(t)\le M\sup_t(\sum_{i=1}^M N_i(t))$ for $M\in\mathbb{N}$ 
and functions $N_i\ge0$. 
After rearranging terms, relabelling $t'$ by $t$,
and adding the squared norm of $\vvel$ and $\Stens$ in $\LR{2}(\Omegat)$, this yields the inequality
\[
\begin{aligned}
		&\norm{\vvel}_{\LR{\infty}(0,t;\LR{2}(\Omega))}^2
	+\norm{\Stens}_{\LR{\infty}(0,t;\LR{2}(\Omega))}^2
	+\norm{\vvel}_{\LR{2}\np{0,t;\HSR{1}(\Omega)}}^2
	+\norm{\Stens}_{\LR{2}\np{0,t;\HSR{1}(\Omega)}}^2 
	\\
	&\qquad\qquad\qquad\hspace{5cm}
	+\int_0^t\int_\Omega\partial\potential(\Stens):\Stens\,
	\dx\dtau
	\\
	&
	\leq\Cc{c}\Bp{
		\norm{\vvel_0}_2^2+\norm{\Stens_0}_2^2 
		+\norm{f_0}_{\LR{1}(0,t;\LR{2}(\Omega))}^2
		+\norm{f_1}_{\LR{2}(\Omegat)}^2 
		+\norm{\grad\wvel}_{\LR{1}(0,t;\LR{2}(\Omega))}^2
		\\
		&\qquad\qquad\qquad
		+\norm{\wvel}_{\LR{4}(\Omegat)}^4
		+\int_0^t \bp{
			\norm{\wvel}_{r}^s
			+\norm{\wvel}_{r}^2+1}
		\bp{\norm{\vvel}_{2}^2
			+\norm{\Stens}_{2}^2}\dtau
	}.
\end{aligned}
\]
Since the last term in the last line is bounded above by 
\[\int_0^t \bp{
	\norm{\wvel}_{r}^s
	+\norm{\wvel}_{r}^2+1}
	\bp{\norm{\vvel}_{\LR{\infty}(0,\tau;\LR{2}(\Omega))}^2
	+\norm{\Stens}_{\LR{\infty}(0,\tau;\LR{2}(\Omega))}^2}\dtau,
\]
an application of Gronwall's inequality 
eventually leads to the bound
\begin{equation}\label{est:approxSol_Gronwall}
	\begin{aligned}
		&\norm{\vvel}_{\LR{\infty}(0,t;\LR{2}(\Omega))}^2
		+\norm{\Stens}_{\LR{\infty}(0,t;\LR{2}(\Omega))}^2
		+\norm{\vvel}_{\LR{2}\np{0,t;\HSR{1}(\Omega)}}^2
		+\norm{\Stens}_{\LR{2}\np{0,t;\HSR{1}(\Omega)}}^2 
		\\
		&\qquad+\int_0^t\potential(\Stens)\dtau
		\leq\Cc{c}\bp{
			\norm{\vvel_0}_2^2+\norm{\Stens_0}_2^2 
			+\norm{f_0}_{\LR{1}(0,t;\LR{2}(\Omega))}^2
			+\norm{f_1}_{\LR{2}(\Omegat)}^2 
			\\
			&\qquad
			+\norm{\grad\wvel}_{\LR{1}(0,t;\LR{2}(\Omega))}^2
			+\norm{\wvel}_{\LR{4}(\Omegat)}^4}
		\exp\Bp{\Cc{c}\int_0^t \bp{
				\norm{\wvel}_{r}^s
				+\norm{\wvel}_{r}^2+1}
			\dtau}
	\end{aligned}
\end{equation}
for $t\in(0,T_k)$ in the case $s<\infty$,
where we also used \eqref{est:dPS}.

\underline{Case $s=\infty$:} We can apply
\eqref{est:nonlinearterm_smallness_v} and \eqref{est:nonlinearterm_smallness_S}
to obtain
\begin{align}
\int_0^t\int_\Omega
\wvel\otimes\vvel:\grad\vvel\dx\dtau
&\leq \frac{\mu}{2} \norm{\grad\vvel}_{\LR{2}\np{\Omegat}}^2,
\label{est:approxSol_wvdv_sinfty}
\\
\int_0^t\int_\Omega\wvel\cdot\grad\Stens:\Stens\dx\dtau
&\leq\frac{\gamma}{2}\bp{
\norm{\Stens}_{\LR{2}\np{\Omegat}}^2
+\norm{\grad\Stens}_{\LR{2}\np{\Omegat}}^2}.
\label{est:approxSol_wdSS_sinfty}
\end{align}
Using \eqref{est:approxSol_wvdv_sinfty} and \eqref{est:approxSol_wdSS_sinfty}
instead of \eqref{est:approxSol_wvdv} and \eqref{est:approxSol_wdSS},
the argument from above leads to
\begin{equation}\label{est:approxSol_Gronwall_sinfty}
\begin{aligned}
&\norm{\vvel}_{\LR{\infty}(0,t;\LR{2}(\Omega))}^2
+\norm{\Stens}_{\LR{\infty}(0,t;\LR{2}(\Omega))}^2
+\norm{\vvel}_{\LR{2}\np{0,t;\HSR{1}(\Omega)}}^2
+\norm{\Stens}_{\LR{2}\np{0,t;\HSR{1}(\Omega)}}^2 \\
&
+\int_0^t\potential(\Stens)\dtau
\leq\Cc{c}\bp{
\norm{\vvel_0}_2^2+\norm{\Stens_0}_2^2 
+\norm{f_0}_{\LR{1}(0,t;\LR{2}(\Omega))}^2
+\norm{f_1}_{\LR{2}(\Omegat)}^2 
\\
&\qquad\qquad\qquad\qquad\qquad\qquad
+\norm{\grad\wvel}_{\LR{1}(0,t;\LR{2}(\Omega))}^2
+\norm{\wvel}_{\LR{4}(\Omegat)}^4}
\e^{\Cc{c}t}
\end{aligned}
\end{equation}
in the case $s=\infty$.

Now consider general $s\in(2,\infty]$ again.
For any $T'\in(0,T_k)$, 
the right-hand side of \eqref{est:approxSol_Gronwall} and \eqref{est:approxSol_Gronwall_sinfty}
can be bounded uniformly in $t\in(0,T')$ 
by a constant $M_{T'}>0$ that only depends on the data and $T'$. 
In particular, $M_{T'}$ is independent of $k\in\N$ and $\potential$,
and we conclude \eqref{est:approxSol_uniformbound} in both cases.
By Parseval's identity, 
this shows
\[
\sum_{r=1}^k \snorm{\alpha_r(t)}^2+\snorm{\beta_r(t)}^2 
= \norm{\vvel(t)}_2^2+\norm{\Stens(t)}_2^2\leq M_{T'}
\]
for all $t\in(0,T')$.
Hence the solution $\np{\alpha,\beta}$ does not blow-up at $t=T'$, 
and we conclude $T_k=T$
together with
\eqref{est:approxSol_uniformbound}.
\end{proof}

\begin{lem}\label{lem:approxSol_timederivative}
For any  $0<T'< T$,
the sequence $\np{\partial_t\vvel_k,\partial_t\Stens_k}_{k\in\N}$
is bounded in 
\[
\LR{1}\bp{0,T';\bp{\HSRNsigma{1}(\Omega)}'}\times\LR{8/7}\bp{0,T';\bp{\HSR{1}(\Omega)^{3\times3}}'}
\]
with
\[
\begin{aligned}
\norm{\pt\vvel_k}_{\LR{1}\np{0,T';\np{\HSRNsigma{1}(\Omega)}'}}
&\leq M_{T'}',
\\
\norm{\pt\Stens_k}_{\LR{8/7}\np{0,T';\np{\HSR{1}(\Omega)}'}}
&\leq M_{T',\potential},
\end{aligned}
\]
where $M_{T'}'$ is independent of $\potential$,
but $M_{T',\potential}$ depends on $\potential$.
\end{lem}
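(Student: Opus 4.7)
The plan is to bound $\partial_t v_k$ and $\partial_t S_k$ in the asserted dual spaces by testing the Galerkin identities \eqref{eq:approxSol_v} and \eqref{eq:approxSol_S} against arbitrary elements of $\HSRNsigma{1}(\Omega)$ and $\HSRdev{1}(\Omega)$, respectively, after first projecting onto the finite-dimensional Galerkin subspace. For $\phi\in\HSRNsigma{1}(\Omega)$ let $P_k^\sigma\phi$ denote the $\LR{2}$-orthogonal projection onto $\mathrm{span}\{\varphi_1,\dots,\varphi_k\}$. Since $\partial_t v_k$ lies in this span, $\LR{2}$-orthogonality gives $\int_\Omega\partial_t v_k\cdot\phi\,\dx=\int_\Omega\partial_t v_k\cdot P_k^\sigma\phi\,\dx$, and a linear combination of \eqref{eq:approxSol_v} over $\ell=1,\dots,k$ expresses this integral through the convective, diffusive, stress and forcing contributions evaluated on $P_k^\sigma\phi$. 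The analogous construction with a projection $P_k^\delta$ onto $\mathrm{span}\{\psi_1,\dots,\psi_k\}$ handles $\partial_t S_k$ via \eqref{eq:approxSol_S}. A decisive input here is the $\HSR{1}$-stability of both projections, which one secures by choosing the bases in Lemmata \ref{lem:ONBL2sigma} and \ref{lem:ONBL2dev} to be simultaneously orthogonal in $\LR{2}$ and $\HSR{1}$ — for instance as spectral bases of the Stokes operator and of the Neumann Laplacian on deviatoric tensor fields — so that $\|P_k^\sigma\phi\|_{1,2}\leq C\|\phi\|_{1,2}$ and $\|P_k^\delta\psi\|_{1,2}\leq C\|\psi\|_{1,2}$.

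With these identities in place, each right-hand side contribution is estimated in a suitable $\LR{q}(0,T')$-space by H\"older, the Sobolev embedding $\HSR{1}(\Omega)\embeds\LR{p}(\Omega)$ for $p\leq 6$, Gagliardo--Nirenberg interpolation, and the uniform bound \eqref{est:approxSol_uniformbound}. For the velocity equation, the slowest-decaying contribution is the external forcing $\int f_0\cdot P_k^\sigma\phi\,\dx$, estimated by $\|f_0\|_2\|\phi\|_{1,2}$ with $\|f_0\|_2\in\LR{1}(0,T')$ by assumption. The convective term $\int(v_k{+}w)\otimes(v_k{+}w):\nabla P_k^\sigma\phi\,\dx$ is bounded by $\|v_k{+}w\|_4^2\|\nabla\phi\|_2$, which lies in $\LR{4/3}(0,T')$ via $\|v_k\|_4\leq C\|v_k\|_2^{1/4}\|v_k\|_{1,2}^{3/4}\in\LR{8/3}(0,T')$ together with the control on $w$ in Assumption \ref{asm:extension}; the viscous and stress terms are controlled in $\LR{2}(0,T')$ and $\LR{\infty}(0,T')$, respectively. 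Altogether this yields $\partial_t v_k\in\LR{1}(0,T';(\HSRNsigma{1}(\Omega))')$ with a bound $M_{T'}'$ independent of $\mathcal{P}$, since the velocity equation does not involve $\partial\mathcal{P}$.

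For $\partial_t S_k$ the critical terms are the convective contributions coming from the Zaremba--Jaumann rate. The transport term $\int v_k\cdot\nabla S_k:P_k^\delta\psi\,\dx$ is bounded, after applying Sobolev to $P_k^\delta\psi\in\LR{4}(\Omega)$, by $C\|v_k\|_4\|\nabla S_k\|_2\|\psi\|_{1,2}$; with $\|v_k\|_4\in\LR{8/3}(0,T')$ and $\|\nabla S_k\|_2\in\LR{2}(0,T')$, H\"older in time places this contribution in $\LR{8/7}(0,T')$, which is precisely the asserted exponent. The rotational term $\int(S_kW(v_k){-}W(v_k)S_k):P_k^\delta\psi\,\dx$ is treated symmetrically by interpolating $S_k$ in $\LR{4}(\Omega)$ and reading $\|\nabla v_k\|_2$ in $\LR{2}(0,T')$, yielding again an $\LR{8/7}(0,T')$-bound. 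The subdifferential contribution $\int\partial\mathcal{P}(S_k):P_k^\delta\psi\,\dx$ is controlled in $\LR{\infty}(0,T')$ through the global Lipschitz estimate $\|\partial\mathcal{P}(S_k)\|_2\leq\mathrm{Lip}(\partial\mathcal{P})\|S_k\|_2$ implied by $\mathcal{P}\in\CR{1,1}(\LRdev{2}(\Omega))$; it is precisely through this estimate that the constant $M_{T',\mathcal{P}}$ acquires its dependence on $\mathcal{P}$. The diffusive and stress-coupling terms are at worst $\LR{2}(0,T')$. The principal technical hurdle is therefore the $\HSR{1}$-stability of the Galerkin projections; once that is secured, the proof reduces to routine H\"older and interpolation calculations based on \eqref{est:approxSol_uniformbound}.
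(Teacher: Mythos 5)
Your overall strategy coincides with the paper's: reduce the time-derivative estimate to testing the Galerkin identities against the $\LR{2}$-projection of a general test function and bound each contribution by H\"older, Sobolev/Gagliardo--Nirenberg interpolation, and the energy bound \eqref{est:approxSol_uniformbound}. The derived exponents ($\LR{1}$ from $f_0$, $\LR{4/3}$ from the convective term for $\partial_t v_k$; $\LR{8/7}$ from $\|v_k\|_4\|\nabla S_k\|_2$ and $\|S_k\|_4\|\nabla v_k\|_2$ for $\partial_t S_k$) match the paper's, and your reading that the Lipschitz bound $\|\partial\potential(S_k)\|_2\le\mathrm{Lip}(\partial\potential)\|S_k\|_2$ is the sole source of the $\potential$-dependence in $M_{T',\potential}$ is exactly how the paper gets it.

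Your identification of the $\HSR{1}$-stability of the Galerkin projections as the decisive technical input is sharp and correct: the paper's two-line sketch (``these bounds follow as for the classical Navier--Stokes equations'') never addresses it, yet without $\|\nabla P_k^\delta\psi\|_2\lesssim\|\psi\|_{1,2}$ and $\|\nabla P_k^\sigma\phi\|_2\lesssim\|\phi\|_{1,2}$ the pointwise dual-norm estimates do not close. However, your proposed fix of choosing spectral bases of the Stokes operator and of the Neumann Laplacian on deviatoric tensor fields is incompatible with the construction the paper actually relies on: Lemma \ref{lem:ONBL2sigma} demands $(\varphi_k)\subset\CRcisigma(\Omega)$ and Lemma \ref{lem:ONBL2dev} demands $(\psi_k)\subset\CRidev(\overline\Omega)$, together with a $\CR{2}(\overline\Omega)$-approximation property that is invoked in the proof of Theorem \ref{thm:existence_PDEmodified} to pass from the Galerkin test functions $\varphi_\ell\chi$, $\psi_\ell\chi$ to general $\Phi\in\CRcisigma(\OmegaTzero)$, $\Psi\in\CRcidev(\OmegaTzerocl)$. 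On a $\CR{1,1}$-domain the Stokes and Neumann eigenfunctions are only $\HSR{2}$-regular and the former are not compactly supported, so they cannot serve as the bases in those lemmata. The genuine issue you have found is therefore not resolved by swapping in a spectral basis without also reworking the approximation step (and the formation of the Galerkin ODE): one must either verify the $\HSR{1}$-stability for the smooth $\LR{2}$-orthonormal basis actually used (which is not automatic from Gram--Schmidt in $\LR{2}$), or weaken the test-function approximation topology in Lemmata \ref{lem:ONBL2sigma}--\ref{lem:ONBL2dev} and in the limit passage of Theorem \ref{thm:existence_PDEmodified} to one compatible with merely $\HSR{2}$-regular eigenbases. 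As it stands your proof needs to reconcile these two competing demands on the basis, and it is worth noting that the paper itself leaves this point to the cited Navier--Stokes literature.
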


\begin{proof}
These bounds follow as for the classical Navier--Stokes equations,
and we only give a brief sketch here,
where we focus on the estimate of $(\pt\Stens_k)$.
The bound for $(\pt\vvel_k)$ follows similarly.
Let $\Stens=\Stens_k$ and recall the 
interpolation inequality
\[
\norm{\Stens(t)}_{4}
\leq\Cc{c}\norm{\Stens(t)}_{2}^{1/4}\norm{\Stens(t)}_{1,2}^{3/4}
\]
for a.a.~$t\in(0,T)$.
Since $\Stens$ is a finite linear combination of elements of the basis $(\psi_\ell)$
of $\HSRdev{1}(\Omega)$,
one deduces from \eqref{eq:approxSol_S}
that
$\pt\Stens(t)\in\bp{\HSR{1}(\Omega)}'$ for a.a.~$t\in(0,T)$
and 
\[
\begin{aligned}
&\norm{\pt\Stens}_{\np{\HSR{1}(\Omega)}'}
\leq \Cc{c}\bp{
\norm{\vvel}_2^{1/4}\norm{\vvel}_{2}^{3/4}\norm{\grad\Stens}_{2}
+\norm{\wvel}_4\norm{\grad\Stens}_{2}
\\
&\qquad\qquad\qquad\quad
+\norm{\Stens}_{2}^{1/4}\norm{\Stens}_{1,2}^{3/4}\norm{\grad\np{\vvel{+}\wvel}}_{2}
+\norm{\Stens}_2
+\norm{\grad\Stens}_{2}
+\norm{\grad\np{\vvel{+}\wvel}}_{2}
},
\end{aligned}
\]
where we used the estimate 
$\norm{\potential(\Stens)}_{2}\leq \Cc{c}\norm{\Stens}_{2}$
for some $\potential$-dependent constant $\Cclast{c}>0$,
which follows from the Lipschitz continuity of $\partial\potential$  
and $\partial\potential(0)=0$.
Using H\"older's inquality and \eqref{est:approxSol_uniformbound},
one concludes the asserted uniform bound for $(\pt\Stens_k)$.
\end{proof}

\subsubsection{Existence of weak solutions to \eqref{sys:PDE_perturbed}}
\label{subsec:ExistenceWeakSolModifiedSystem}

Based on the previous preparations
we establish the existence of weak solutions to \eqref{sys:PDE_perturbed}
as stated in Theorem \ref{thm:existence_PDEmodified}.

\begin{proof}[Proof of Theorem \ref{thm:existence_PDEmodified}]
Let $\np{\vvel_k,\Stens_k}_{k\in\N}$
be the sequence of approximate solutions in $(0,T)$
from Lemma 
\ref{lem:approxSol_existence}.
Due to the uniform bounds from \eqref{est:approxSol_uniformbound} 
for $T'\in(0,T)$,
combined with a classical diagonalization argument,
we obtain the existence of a subsequence, which we also denote by $\np{\vvel_k,\Stens_k}$,
and a pair $\np{\vvel,\Stens}$ with
$\np{\vvel,\Stens}\in\LHprime\times\XTprime$ for each $T'\in(0,T)$
such that
\[
\begin{aligned}
\vvel_k&\rightharpoonup\vvel && \tin \LR{2}(0,T';\HSR{1}(\Omega)^3),
\\
\Stens_k&\rightharpoonup\Stens && \tin \LR{2}(0,T';\HSR{1}(\Omega)^{3\times3}),
\\
\vvel_k&\overset{\ast}{\rightharpoonup}\vvel && \tin \LR{\infty}(0,T';\LRsigma{2}(\Omega)),
\\
\Stens_k&\overset{\ast}{\rightharpoonup}\Stens && \tin \LR{\infty}(0,T';\LRdev{2}(\Omega)),
\\
\end{aligned}
\]
as $k\to\infty$.
Moreover, since by Lemma \ref{lem:approxSol_timederivative} the sequences
$(\pt\vvel_k)$ and $(\pt\Stens_k)$ are bounded in $\LR{1}(0,T';\bp{\HSRNsigma{1}(\Omega)}')$ and 
$\LR{8/7}(0,T';\bp{\HSR{1}(\Omega)^{3\times3}}')$, respectively,
and since the Sobolev embedding $\HSR{1}(\Omega)\embeds\LR{2}(\Omega)$ is compact,
the Aubin--Lions lemma further implies the strong convergence
\[
\begin{aligned}
\vvel_k&\to\vvel && \tin \LR{2}(0,T';\LR{2}(\Omega)^3),
\\
\Stens_k&\to\Stens && \tin \LR{2}(0,T';\LR{2}(\Omega)^{3\times3})
\end{aligned}
\]
as $k\to\infty$.
Let us show that 
$\np{\vvel,\Stens}$ is a weak solution to \eqref{sys:PDE_perturbed},
that is, that \eqref{eq:weaksol_perturbed_v} and \eqref{eq:weaksol_perturbed_S}
are satisfied.
To this end, let $\chi\in\CRci([0,T))$
and let $T'\in(0,T)$ such that $\supp\chi\subset[0,T')$.
Fix $\ell\in\N$.
Multiply \eqref{eq:approxSol_v}
and \eqref{eq:approxSol_S}
by $\chi(t)$, integrate over $(0,T)$
and pass to the limit $k\to\infty$
exploiting the above convergence properties.
For example, 
in view of \eqref{eq:approxSol_IV},
for $k\geq\ell$ we have 
\[
\begin{aligned}
\int_0^T\int_\Omega\pt\vvel_k\cdot\varphi_\ell\chi\dx\dt
&=-\int_0^T\int_\Omega\vvel_k\cdot\varphi_\ell\pt\chi\dx\dt
-\int_\Omega\vvel_0\cdot\varphi_\ell\chi(0)\dx
\\
&\to-\int_0^T\int_\Omega\vvel\cdot\varphi_\ell\pt\chi\dx\dt
-\int_\Omega\vvel_0\cdot\varphi_\ell\chi(0)\dx.
\end{aligned}
\]
Employing the strong convergence of $(\vvel_k)$ in $\LR{2}(\OmegaTprime)$,
we further conclude
\[
\int_0^T\int_\Omega\vvel_k\otimes\vvel_k:\grad\varphi_\ell\chi\dx\dt
\to\int_0^T\int_\Omega\vvel\otimes\vvel:\grad\varphi_\ell\chi\dx\dt.
\]
Similarly, we can derive
\[
\int_0^T\int_\Omega\partial\potential(\Stens_k):\psi_\ell\chi\dx\dt
\to\int_0^T\int_\Omega\partial\potential(\Stens):\psi_\ell\chi\dx\dt
\]
from the Lipschitz continuity of $\partial\potential$ 
and the strong convergence of $(\Stens_k)$ in $\LR{2}(\OmegaTprime)$.
Convergence of the remaining terms can be shown in a similar fashion,
and we conclude
\eqref{eq:weaksol_perturbed_v} and \eqref{eq:weaksol_perturbed_S}
for all $\Phi$ and $\Psi$ of the form
$\Phi(x,t)=\varphi_\ell(x)\chi(t)$ and $\Psi(x,t)=\psi_\ell(x)\chi(t)$
with $\ell\in\N$.
Finally, an approximation argument based on 
Lemma \ref{lem:ONBL2sigma}
and Lemma \ref{lem:ONBL2dev}
allows us to pass to general $\Phi\in\CRcisigma\np{\OmegaTzero}$
and $\Psi\in\CRcidev\np{\OmegaTzerocl}$, respectively.
Consequently, $\np{\vvel,\Stens}$ is a weak solution to \eqref{sys:PDE_perturbed}.

Now let us show the energy inequalities \eqref{est:EnergyInequality_v} 
and \eqref{est:EnergyInequality_S}.
Similarly to \cite[Proof of Theorem 3.1]{GaldiReviewOnIVP},
one can show that $\np{\vvel_k(t),\Stens_k(t)}\rightharpoonup\np{\vvel(t),\Stens(t)}$ 
in $\LR{2}(\Omega)$
as $k\to\infty$ for all $t\in(0,T)$
after possibly modifying the solution $\np{\vvel,\Stens}$ 
on a set of measure zero in $(0,T)$.
This property and the weak convergence in $\LR{2}(0,T';\HSR{1}(\Omega))$ imply
\begin{align}
\half\norm{\vvel(t)}_2^2
+\mu\norm{\grad\vvel}_{\LR{2}(\Omegat)}^2
&\leq
\liminf_{k\to\infty}\bp{
\half\norm{\vvel_k(t)}_2^2
+\mu\norm{\grad\vvel_k}_{\LR{2}(\Omegat)}^2
},
\label{est:EnergyInequality_lhs_conv_v}
\\
\half\norm{\Stens(t)}_2^2
+\gamma\norm{\grad\Stens}_{\LR{2}(\Omegat)}^2
&\leq
\liminf_{k\to\infty}\bp{
\half\norm{\Stens_k(t)}_2^2
+\gamma\norm{\grad\Stens_k}_{\LR{2}(\Omegat)}^2
}.
\label{est:EnergyInequality_lhs_conv_S}
\end{align}
Moreover, the strong convergence $\Stens_k\to\Stens$ in $\LR{2}(\OmegaTprime)$
and the Lipschitz continuity of $\partial\potential$ lead to
\begin{equation}\label{eq:EnergyInequality_potterm_conv}
\lim_{k\to\infty}\int_0^t\int_\Omega\partial\potential(\Stens_k):\Stens_k\dx\dt
=\int_0^t\int_\Omega\partial\potential(\Stens):\Stens\dx\dt.
\end{equation}
By construction we further have 
$\norm{\vvel(0)}_{2}\leq\norm{\vvel_0}_{2}$ and
$\norm{\Stens(0)}_{2}\leq\norm{\Stens_0}_{2}$
due to Bessel's inequality,
and we can directly conclude
\begin{align}
\lim_{k\to\infty}
\int_0^t\int_\Omega 
\bb{f_0\cdot\vvel_k
-f_1:\grad\vvel_k
}
\dx\dt
&=\int_0^t\int_\Omega 
\bb{f_0\cdot\vvel
-f_1:\grad\vvel
}
\dx\dt,
\label{eq:EnergyInequality_rhslin_conv_v1}
\\
\lim_{k\to\infty}
\int_0^t\int_\Omega 
\wvel\otimes\wvel:\grad\vvel_k
\dx\dt
&=\int_0^t\int_\Omega 
\wvel\otimes\wvel:\grad\vvel
\dx\dt,
\label{eq:EnergyInequality_rhslin_conv_v2}
\\
\lim_{k\to\infty}
\int_0^t\int_\Omega 
\eta\grad\wvel:\Stens_k
\dx\dt
&=\int_0^t\int_\Omega 
\eta\grad\wvel:\Stens
\dx\dt
\label{eq:EnergyInequality_rhslin_conv_S}
\end{align}
since $f,\wvel\otimes\wvel,\grad\wvel\in\LR{2}\np{\OmegaTprime}$.
Moreover, the strong convergence of $(\Stens_k)$ in $\LR{2}(\OmegaTprime)$ 
and the weak convergence of $(\grad\vvel_k)$ in $\LR{2}(\OmegaTprime)$ 
imply
\begin{equation}\label{eq:EnergyInequality_rhsnonlinCouple_conv}
\lim_{k\to\infty}\int_0^t\int_\Omega\Stens_k:\grad\vvel_k\dx\dt
=\int_0^t\int_\Omega\Stens:\grad\vvel\dx\dt.
\end{equation}
For the remaining terms, assume for the moment that $\wvel\in\CRci(\OmegaT)$.
Then $(\wvel\otimes\vvel_k)$ converges to $\wvel\otimes\vvel$ 
strongly in $\LR{2}\np{\OmegaT}$.
Hence we obtain
\begin{equation}\label{eq:EnergyInequality_rhsnonlin1_conv}
\lim_{k\to\infty}\int_0^t\int_\Omega\wvel\otimes\vvel_k:\grad\vvel_k\dx\dt
=\int_0^t\int_\Omega\wvel\otimes\vvel:\grad\vvel\dx\dt.
\end{equation}
For general $\wvel\in\LR{s}\np{0,T;\LR{r}(\Omega)}$
we obtain \eqref{eq:EnergyInequality_rhsnonlin1_conv} by approximating 
$\wvel$ by elements from $\CRci(\OmegaT)$
and exploiting that $\LH\embeds\LR{q}\np{0,T;\LR{p}(\Omega)}$ 
with $1/p=1/2-1/r$, $1/q=1/2-1/s$,
so that $2/q+3/p=3/2$.
Observe that here we use $\wvel\in\CR{0}(0,T;\LR{3}(\Omega))$ if $s=\infty$.
An analogous argument shows  
\begin{equation}\label{eq:EnergyInequality_rhsnonlin2_conv}
\lim_{k\to\infty}\int_0^t\int_\Omega\wvel\cdot\grad\Stens_k:\Stens_k\dx\dt
=\int_0^t\int_\Omega\wvel\cdot\grad\Stens:\Stens\dx\dt.
\end{equation}
Finally, we combine 
\eqref{est:EnergyInequality_lhs_conv_v}--\eqref{eq:EnergyInequality_rhsnonlin2_conv}
with the energy equalities \eqref{eq:approxSol_EnergyEquality_v} and 
\eqref{eq:approxSol_EnergyEquality_S}
to conclude the energy inequalities \eqref{est:EnergyInequality_v}
and \eqref{est:EnergyInequality_S}.
Moreover, in the same way as for the
the classical Navier--Stokes initial-value problem
(see \cite[Lemma 2.2]{GaldiReviewOnIVP} for example),
the weak solution can be redefined on a set of measure zero
such that it is weakly continuous in the sense of \eqref{eq:WeakContinuity}.
This finishes the proof of Theorem \ref{thm:existence_PDEmodified}. 
\end{proof}

\begin{rem}\label{rem:weaksol_PDEmodified_bounds}
From the proof of Theorem \ref{thm:existence_PDEmodified}
we directly obtain 
\[
\begin{aligned}
\norm{\vvel}_{\LR{\infty}\np{0,T';\LR{2}(\Omega)}}^2
+\norm{\Stens}_{\LR{\infty}\np{0,T';\LR{2}(\Omega)}}^2 
\qquad\qquad\qquad\qquad\qquad\qquad&\\
+\norm{\vvel}_{\LR{2}\np{0,T';\HSR{1}(\Omega)}}^2
+\norm{\Stens}_{\LR{2}\np{0,T';\HSR{1}(\Omega)}}^2
+\int_0^{T'}\!\potential(\Stens)\dt
&\leq M_{T'},
\\
\norm{\pt\vvel}_{\LR{1}\np{0,T';\np{\HSRNsigma{1}(\Omega)}'}}
&\leq M_{T'}',
\\
\norm{\pt\Stens}_{\LR{8/7}\np{0,T';\np{\HSR{1}(\Omega)}'}}
&\leq M_{T',\potential},
\end{aligned}
\]
for each $0<T'<T$,
where $M_{T'}$, $M_{T'}'$ and $M_{T',\potential}$ are given in 
Lemma \ref{lem:approxSol_existence} and Lemma \ref{lem:approxSol_timederivative}. 
\end{rem}

\subsection{Existence of a suitable extension}
\label{subsec:extension}

Since we have shown existence of a weak solution to \eqref{sys:PDE_perturbed}
in Theorem \ref{thm:existence_PDEmodified},
it remains to address the existence of solutions $\wvel$ 
to the Stokes initial-value problem \eqref{sys:StokesIVP}.
As mentioned above,
the forcing term $\widetilde{F}$ and the initial value $\wvel_0$ 
in \eqref{sys:StokesIVP} are not prescribed 
by the original problem,
whence we have some freedom in their choice.
For example, 
we can simply consider data $\widetilde{F}=\wvel_0=0$
and use existing theory for the Stokes initial-value problem
with inhomogeneous Dirichlet data
(see \cite{Grubb_NonhDirichletNSlowRegLpSobolevSpaces_2001,
FursikovGunzburgerHou_TraceTheorems3DTimeDependentVectorFields_2002,
Raymond_StokesNSNonhomBdryCond_2007} for example)
to obtain a suitable extension $\wvel$ 
satisfying Assumption \ref{asm:extension} in case \ref{item:extension_i}.
Proceeding like this for the cases \ref{item:extension_ii} and \ref{item:extension_iii} 
would require smallness of $g$.
In the following we focus on case \ref{item:extension_iii},
which is more general than \ref{item:extension_ii},
and show that smallness of $g$ is not necessary
if we exploit the freedom we have in the choice of $\widetilde{F}$
and $\wvel_0$.
For this purpose, we use the following 
well-known lemma.

\begin{lem}\label{lem:ext.nonl.small.v}
Let $\Omega$ be a bounded domain with connected $\CR{1,1}$-boundary,
$T\in(0,\infty]$
and let $g$
satisfy
\begin{subequations}\label{eq:Cond.g}
\begin{align}
\label{el:cond_g_lem}
&g\in\LR{\infty}\np{0,T;\HSR{1/2}(\partial\Omega)^3},\quad
\pt g\in\LR{\infty}\np{0,T;\HSR{-1/2}(\partial\Omega)^3},
\\
\label{eq:compatibility_g}
&\qquad\qquad\int_{\partial\Omega} g(t)\cdot\nvec=0\quad \text{for a.a.~}t\in(0,T).
\end{align}
\end{subequations}
Then, for each $\alpha>0$ there exists a function
\begin{equation}\label{el:extension_nonlinearterm_small_v}
\wvel_\alpha\in\LR{\infty}\np{0,T;\HSR{1}(\Omega)^3},\quad
\pt \wvel_\alpha\in\LR{\infty}\np{0,T;\HSR{-1}(\Omega)^3}
\end{equation}
with $\wvel_\alpha=g$ on $\partial\OmegaT$
and $\Div\wvel_\alpha=0$ in $\OmegaT$ such that
\[
\forall\, \vvel_1,\vvel_2\in\HSRN{1}(\Omega):\quad
\snormL{\int_\Omega 
\wvel_\alpha(t)\otimes\vvel_1:\grad\vvel_2
\dx}
\leq\alpha
\norm{\grad\vvel_1}_{2}\norm{\grad\vvel_2}_{2}
\]
and for all $q\in[1,\infty]$ there exists a constant $\Cc{C}=\Cclast{C}\np{\Omega,\alpha,q}>0$ 
such that
for all $T'\in(0,T)$ it holds
\begin{equation}\label{est:extension}
\begin{aligned}
\norm{\wvel_\alpha}_{\LR{q}(0,T';\HSR{1}(\Omega))}
&\leq \Cclast{C}\norm{g}_{\LR{q}(0,T';\HSR{1/2}(\partial\Omega))}
\\
\norm{\pt\wvel_\alpha}_{\LR{q}\np{0,T';\HSR{-1}(\Omega)}}
&\leq \Cclast{C}\norm{\pt g}_{\LR{q}\np{0,T';\HSR{-1/2}(\partial\Omega)}}.
\end{aligned}
\end{equation} 
\end{lem}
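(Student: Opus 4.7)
The plan is to follow the classical construction due to Hopf (cf.\ Galdi's monograph), adapted to the time-dependent setting. It proceeds in three stages: first a preliminary divergence-free extension of $g$ to all of $\Omega$; second, a localization of this extension in a thin boundary strip via a Hopf cutoff combined with a vector potential (to preserve the divergence constraint); third, an estimate of the resulting trilinear form via Hardy's inequality.

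Exploiting the compatibility condition \eqref{eq:compatibility_g}, I would first invoke a standard linear extension operator (e.g.\ the solution operator of the stationary inhomogeneous Stokes problem in the $C^{1,1}$-domain $\Omega$) applied pointwise in $t$ to obtain a preliminary $W\in\LR{\infty}(0,T;\HSR{1}(\Omega)^3)$ that is divergence-free, satisfies $W=g$ on $\partialOmegaT$, and obeys the linear bound $\norm{W(t)}_{\HSR{1}(\Omega)}\leq C\norm{g(t)}_{\HSR{1/2}(\partial\Omega)}$. Since the operator is linear in its datum, the same construction applied to $\pt g$ produces $\pt W\in\LR{\infty}(0,T;\HSR{-1}(\Omega)^3)$ with the analogous bound.

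For small $\delta\in(0,1)$, I would then introduce the Hopf cutoff $\chi_\delta(x)$, a smooth function of $\dist(x,\partial\Omega)$ with $\chi_\delta\equiv 1$ on a thin inner strip and $\chi_\delta\equiv 0$ outside a slightly thicker one, satisfying the key pointwise estimate $\snorm{\grad\chi_\delta(x)}\leq C/(\dist(x,\partial\Omega)\snorm{\ln\delta})$. Because naive multiplication would destroy the divergence constraint, I would fix a vector potential $A(\cdot,t)\in\HSR{2}(\Omega)^3$ with $\curl A=W$ (available since $\Div W=0$, with $\norm{A(t)}_{\HSR{2}}\leq C\norm{W(t)}_{\HSR{1}}$) and define
\[
\wvel_\alpha(x,t):=\curl\bp{\chi_\delta(x)\,A(x,t)}=\chi_\delta(x)\,W(x,t)+\grad\chi_\delta(x)\times A(x,t),
\]
so that $\Div\wvel_\alpha=0$ automatically, and $\chi_\delta\equiv 1$ near $\partial\Omega$ yields $\wvel_\alpha|_{\partialOmegaT}=g$. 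The trilinear estimate then follows from the expansion above: the contribution of $\chi_\delta W$ is small in $\LR{3}(\Omega)$ by the shrinking support of $\chi_\delta$, while the leading piece involving $\grad\chi_\delta$ is controlled by the Hardy inequality $\norm{v/\dist(\cdot,\partial\Omega)}_{2}\leq C\norm{\grad v}_{2}$ on $\HSRN{1}(\Omega)$ (valid in $C^{1,1}$-domains) together with the 3D embedding $\HSR{2}\hookrightarrow\LR{\infty}$ applied to $A$, giving an estimate of the order $\norm{W(t)}_{\HSR{1}}/\snorm{\ln\delta}$ against $\norm{\grad\vvel_1}_{2}\norm{\grad\vvel_2}_{2}$. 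The bounds \eqref{est:extension} follow from the linearity of the map $g\mapsto W\mapsto A\mapsto\wvel_\alpha$ combined with the elementary (now $\delta$-dependent) estimate $\norm{\wvel_\alpha(t)}_{\HSR{1}}\leq C(\delta)\norm{W(t)}_{\HSR{1}}$, and similarly for $\pt\wvel_\alpha$ by differentiating the linear construction in $t$.

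The hard part is the simultaneous calibration of $\delta$: it must be chosen once and for all so that the smallness constant is $\leq\alpha$ uniformly in $t\in(0,T)$. This is possible precisely because \eqref{el:cond_g_lem} yields a uniform-in-time bound on $\norm{g(t)}_{\HSR{1/2}(\partial\Omega)}$, hence on $\norm{W(t)}_{\HSR{1}}$ and $\norm{A(t)}_{\HSR{2}}$. Once $\delta=\delta(\alpha,\Omega,\norm{g}_{\LR{\infty}_t\HSR{1/2}})$ is fixed, both the smallness property and the linear bounds \eqref{est:extension} hold with a constant depending only on $\Omega$, $\alpha$ and $q$, as asserted.
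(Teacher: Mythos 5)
The paper's own proof is a two-sentence citation to \cite[Proposition 5.4]{FarwigKozonoSohr_GlobalLHWeakSolNSTimeDependentBdryData}, together with the remark that passing from $\alpha=\tfrac{1}{4}$ to arbitrary $\alpha>0$ is straightforward. Your argument reconstructs the Hopf-type extension (preliminary linear divergence-free extension of $g$, logarithmic cutoff in a boundary strip, vector potential to preserve solenoidality, Hardy's inequality) that underlies that cited reference, so the approach is essentially the same --- you simply supply the details the paper delegates. Two loose ends are worth flagging, though neither is fatal to the approach. First, concluding $\partial_t\wvel_\alpha\in\LR{\infty}\np{0,T;\HSR{-1}(\Omega)}$ from $\partial_t g\in\LR{\infty}\np{0,T;\HSR{-1/2}(\partial\Omega)}$ by ``linearity'' of $g\mapsto W\mapsto A\mapsto\wvel_\alpha$ presupposes that these operators remain bounded at the lower (negative) Sobolev level, which requires its own justification (very weak Stokes theory, curl elliptic regularity) rather than mere formal differentiation in $t$. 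Second, since $\delta$ must be calibrated to $\norm{g}_{\LR{\infty}\np{0,T;\HSR{1/2}(\partial\Omega)}}$ to achieve the uniform-in-$t$ smallness, the constant in \eqref{est:extension} in fact carries an implicit dependence on that norm; this is a mild imprecision already present in the lemma's statement, and your closing sentence restates it without resolving the tension.
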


\begin{proof}
In 
\cite[Proposition 5.4]{FarwigKozonoSohr_GlobalLHWeakSolNSTimeDependentBdryData}
the statement was shown with $\alpha=1/4$.
An adaptation of the proof for arbitrary $\alpha>0$ is straightforward.
\end{proof}

In order to ensure \eqref{est:nonlinearterm_smallness_S},
it is sufficient to assume smallness of $g\cdot\nvec$ in a suitable norm.

\begin{lem}\label{lem:extension_nonlinearterm_small_S}
In the situation of Lemma \ref{lem:ext.nonl.small.v}
it holds
\[
\forall\, \Stens\in\HSR{1}(\Omega)^{3\times3}:\quad
\snormL{\int_\Omega 
\wvel_\alpha(t)\cdot\grad\Stens:\Stens
\dx}
\leq \Cc{C}\norm{g\cdot\nvec}_{\LR{\infty}(0,T;\LR{2}(\partial\Omega))}
\norm{\Stens}_{1,2}^2
\]
for some constant $\Cclast{C}=\Cclast{C}(\Omega)>0$.
\end{lem}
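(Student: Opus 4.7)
The plan is to reduce the volume integral to a boundary integral via integration by parts, exploiting the fact that $\wvel_\alpha$ is divergence-free and coincides with $g$ on $\partial\Omega$. Concretely, since $\Div\wvel_\alpha=0$ and $\Stens$ is symmetric, for smooth $\Stens$ we have
\[
\wvel_\alpha\cdot\grad\Stens:\Stens=\tfrac{1}{2}\wvel_\alpha\cdot\grad|\Stens|^2
=\tfrac{1}{2}\Div\bp{\wvel_\alpha|\Stens|^2},
\]
so that the divergence theorem combined with $\wvel_\alpha|_{\partial\Omega}=g$ yields
\[
\int_\Omega\wvel_\alpha(t)\cdot\grad\Stens:\Stens\dx
=\tfrac{1}{2}\int_{\partial\Omega}\np{g(t)\cdot\nvec}|\Stens|^2\dsigma.
\]
By density of smooth tensors in $\HSR{1}(\Omega)^{3\times3}$ and the continuity of the trace operator, this identity extends to general $\Stens\in\HSR{1}(\Omega)^{3\times 3}$.

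Next, I would estimate the boundary integral by H\"older's inequality,
\[
\snormL{\int_{\partial\Omega}\np{g(t)\cdot\nvec}|\Stens|^2\dsigma}
\leq \norm{g(t)\cdot\nvec}_{\LR{2}(\partial\Omega)}\,\norm{\Stens}_{\LR{4}(\partial\Omega)}^2,
\]
and then invoke the Sobolev trace chain
\[
\HSR{1}(\Omega)\embeds\HSR{1/2}(\partial\Omega)\embeds\LR{4}(\partial\Omega),
\]
which is valid since $\partial\Omega$ is a two-dimensional Lipschitz (in fact $\CR{1,1}$) manifold and, in dimension two, $\HSR{1/2}\embeds\LR{4}$ is the borderline Sobolev embedding. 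This produces
\[
\norm{\Stens}_{\LR{4}(\partial\Omega)}^2\leq \Cc{C}\,\norm{\Stens}_{1,2}^2,
\]
with $\Cclast{C}=\Cclast{C}(\Omega)$.

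Combining the two estimates and bounding $\norm{g(t)\cdot\nvec}_{\LR{2}(\partial\Omega)}$ by $\norm{g\cdot\nvec}_{\LR{\infty}(0,T;\LR{2}(\partial\Omega))}$ (for a.a.~$t$) yields the asserted inequality with a constant depending only on $\Omega$. No step is particularly delicate: the main observation is simply that the divergence-free structure of $\wvel_\alpha$ converts the critical volume term into a boundary term controlled by $g\cdot\nvec$ alone, so that only the normal component of the boundary data enters the estimate; this is precisely what makes condition \eqref{est:nonlinearterm_smallness_S} automatic when $g\cdot\nvec=0$.
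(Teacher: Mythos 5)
Your proof is correct and follows essentially the same route as the paper: rewrite the integrand as $\tfrac12\wvel_\alpha\cdot\grad|\Stens|^2$, use $\Div\wvel_\alpha=0$ and $\wvel_\alpha|_{\partial\Omega}=g$ to reduce to a boundary integral over $g\cdot\nvec\,|\Stens|^2$, then H\"older plus the trace embedding $\HSR{1}(\Omega)\hookrightarrow\HSR{1/2}(\partial\Omega)\hookrightarrow\LR{4}(\partial\Omega)$. Your extra remarks on the density argument and on why $\HSR{1/2}\hookrightarrow\LR{4}$ holds on the two-dimensional boundary are accurate but not materially different from the paper's reasoning.
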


\begin{proof}
We have
\[
\int_\Omega 
\wvel_\alpha(t)\cdot\grad\Stens:\Stens
\dx
=\half\int_\Omega 
\wvel_\alpha(t)\cdot\grad\snorm{\Stens}^2
\dx
=\half\int_{\partial\Omega} 
g(t)\cdot\nvec\snorm{\Stens}^2
\,\dsigma.
\]
Now H\"older's inequality and Sobolev embeddings imply
\[
\snormL{
\int_\Omega 
\wvel_\alpha(t)\cdot\grad\Stens:\Stens
\dx}
\leq\Cc{c}\norm{g(t)\cdot\nvec}_{2}\norm{\Stens}_{\LR{4}(\partial\Omega)}^2
\leq\Cc{c}\norm{g(t)\cdot\nvec}_{2}\norm{\Stens}_{\HSR{1/2}(\partial\Omega)}^2.
\]
The statement now follows from a standard trace inequality.
\end{proof}

In particular, Lemma \ref{lem:extension_nonlinearterm_small_S}
implies that condition \eqref{est:nonlinearterm_smallness_S}
is satisfied automatically when $g\cdot\nvec=0$,
which is the case we are interested in.

\subsection{Existence of a weak solution}
\label{subsec:existence.weaksol}

Combining now Theorem \ref{thm:existence_PDEmodified},
Lemma \ref{lem:ext.nonl.small.v} and Lemma \ref{lem:extension_nonlinearterm_small_S},
we show existence of a weak solution to the original problem 
\eqref{sys:PDE}--\eqref{eq:InitialConditions}
in the sense of Definition \ref{def:weaksol}.

\begin{proof}[Proof of Theorem \ref{thm:existence_weaksol}]
  Let $\wvel=\wvel_\alpha$ from Lemma \ref{lem:ext.nonl.small.v} with
  $\alpha=\mu/2$.  Then the Aubin--Lions lemma implies
  $\wvel\in\CR{0}(0,T;\LR{2}(\Omega))$, and in virtue of $g\cdot\nvec=0$ and
  Lemma \ref{lem:extension_nonlinearterm_small_S}, we see that Assumption
  \ref{asm:extension} \ref{item:extension_iii} is satisfied.  From H\"older's
  inequality and Sobolev embeddings we further conclude the remaining
  properties of Assumption \ref{asm:extension}.  Since
  $\wvel\in\CR{0}(0,T;\LR{2}(\Omega))$, we can define
  $\wvel_0\coloneqq\wvel(\cdot,0)\in\LR{2}(\Omega)$.  Moreover, we set
  $\widetilde{F}\coloneqq\pt\wvel-\mu\Delta\wvel$.  Since every element of
  $\HSR{-1}(\Omega)^n$ can be represented as the divergence of a tensor field
  from $\LR{2}(\Omega)^{n\times n}$ (see \cite[Lemma
  1.6.1]{Sohr_TheNavierStokesEquations_2001} for example), we obtain
  $\widetilde{F}=\Div\widetilde{F}_1$ for some
  $\widetilde{F}_1\in\LR{\infty}(0,T;\LR{2}(\Omega)^{n\times n})$.  
  Now we set $f_0\coloneqq F_0$, $f_1\coloneqq F_1-\widetilde{F}_1$,
  $\vvel_0\coloneqq\Vvel_0-\wvel_0$, and let $\np{\vvel,\Stens}$ be the weak
  solution to \eqref{sys:PDE_perturbed} from Theorem
  \ref{thm:existence_PDEmodified}.  Since for all $\Phi\in\CRci(\OmegaTzero)^3$
  we have
\[
  \int_0^T\!\!\int_\Omega\! \bb{ -\wvel\cdot\pt\Phi +\mu\grad\wvel:\grad\Phi }
  \dx\dt =-\int_0^T\!\!\int_\Omega\! \widetilde{F}_1:\grad\Phi\dx\dt
  +\int_\Omega\!\wvel_0\cdot\Phi(\cdot,0)\dx,
\]
the pair $\np{\Vvel,\Stens}\coloneqq\np{\vvel{+}\wvel,\Stens}$ is a weak
solution to \eqref{sys:PDE}--\eqref{eq:InitialConditions} in the sense of
Definition \ref{def:weaksol}.  
Moreover, \eqref{eq:WeakContinuity_VS} follows
from \eqref{eq:WeakContinuity} and $\wvel\in\CR{0}(0,T;\LR{2}(\Omega))$,
and \eqref{est:EnergyInequality_v_combined} 
is a direct consequence of \eqref{est:EnergyInequality_v}.
Similarly to the proof of Lemma \ref{lem:extension_nonlinearterm_small_S},
we further derive
\[
\int_\Omega 
\wvel(t)\cdot\grad\Stens:\Stens
\dx
=\half\int_{\partial\Omega} 
g(t)\cdot\nvec\snorm{\Stens}^2
\,\dsigma=0
\]
since $g\cdot\nvec=0$. 
With this identity, \eqref{est:EnergyInequality_S_combined} directly follows from
\eqref{est:EnergyInequality_S}.
\end{proof}

We can further derive the following estimate,
which will be needed for passing from smooth Moreau
envelopes $\potential_\ve$ to nonsmooth
potentials $\potential$.

\begin{lem}
For all $0<T'<T$ 
there exists a constant $M_{T'}>0$, which is independent of $\potential$,
such that
\begin{equation}\label{est:weaksol_bounds}
\begin{aligned}
&\norm{\Vvel}_{\LR{\infty}\np{0,T';\LR{2}(\Omega)}}
+\norm{\Vvel}_{\LR{2}\np{0,T';\HSR{1}(\Omega)}}
+\norm{\pt\Vvel}_{\LR{1}\np{0,T';\np{\HSRNsigma{1}(\Omega)}'}}
\\
&\qquad+\norm{\Stens}_{\LR{\infty}\np{0,T';\LR{2}(\Omega)}}
+\norm{\Stens}_{\LR{2}\np{0,T';\HSR{1}(\Omega)}}
+\int_0^{T'}\potential(\Stens)\dt
\leq M_{T'}.
\end{aligned}
\end{equation}
\end{lem}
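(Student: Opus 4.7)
The key observation is that every bound needed is already available from the proof of Theorem~\ref{thm:existence_weaksol} with constants that are explicitly $\potential$-independent. The plan is simply to collect these bounds for the perturbation $\vvel$ and for the extension $\wvel$ separately and combine them via $\Vvel=\vvel+\wvel$.

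First, since $(\vvel,\Stens)$ is the weak solution to the modified system~\eqref{sys:PDE_perturbed} supplied by Theorem~\ref{thm:existence_PDEmodified}, Remark~\ref{rem:weaksol_PDEmodified_bounds} yields
\[
\norm{\vvel}_{\LR{\infty}(0,T';\LR{2}(\Omega))}
+\norm{\vvel}_{\LR{2}(0,T';\HSR{1}(\Omega))}
+\norm{\Stens}_{\LR{\infty}(0,T';\LR{2}(\Omega))}
+\norm{\Stens}_{\LR{2}(0,T';\HSR{1}(\Omega))}
+\int_0^{T'}\!\potential(\Stens)\dt
\le M_{T'},
\]
together with $\norm{\pt\vvel}_{\LR{1}(0,T';(\HSRNsigma{1}(\Omega))')}\le M_{T'}'$. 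Crucially, by Lemma~\ref{lem:approxSol_existence} and Lemma~\ref{lem:approxSol_timederivative} the constants $M_{T'}$ and $M_{T'}'$ depend only on $T'$ and on the data $\vvel_0$, $\Stens_0$, $f_0$, $f_1$, $\wvel$, but \emph{not} on $\potential$ (the $\potential$-dependent bound in Lemma~\ref{lem:approxSol_timederivative} concerns only $\pt\Stens$, which is not part of the estimate we need to prove).

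Second, the extension $\wvel=\wvel_{\mu/2}$ chosen in the proof of Theorem~\ref{thm:existence_weaksol} satisfies, by Lemma~\ref{lem:ext.nonl.small.v} with $q=\infty$,
\[
\norm{\wvel}_{\LR{\infty}(0,T';\HSR{1}(\Omega))}
\le C\norm{g}_{\LR{\infty}(0,T';\HSR{1/2}(\partial\Omega))},
\qquad
\norm{\pt\wvel}_{\LR{\infty}(0,T';\HSR{-1}(\Omega))}
\le C\norm{\pt g}_{\LR{\infty}(0,T';\HSR{-1/2}(\partial\Omega))},
\]
where $C$ depends only on $\Omega$ and $\mu$ and, in particular, not on $\potential$. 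Since the inclusions $\LR{\infty}(0,T';X)\hookrightarrow \LR{2}(0,T';X)$ and $\LR{\infty}(0,T';Y)\hookrightarrow \LR{1}(0,T';Y)$ are continuous on a bounded time interval (and under the assumed time-integrability of $g$ and $\pt g$ from~\eqref{el:cond_g}), both bounds transfer to the relevant Bochner norms with a factor depending only on $T'$.

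Combining these two sets of bounds via $\Vvel=\vvel+\wvel$ and the triangle inequality yields
\[
\norm{\Vvel}_{\LR{\infty}(0,T';\LR{2}(\Omega))}
+\norm{\Vvel}_{\LR{2}(0,T';\HSR{1}(\Omega))}
+\norm{\pt\Vvel}_{\LR{1}(0,T';(\HSRNsigma{1}(\Omega))')}
\le M_{T'}'',
\]
with a $\potential$-independent constant $M_{T'}''$, while the bounds on $\Stens$ and on $\int_0^{T'}\potential(\Stens)\dt$ already appear in the estimate for $(\vvel,\Stens)$. Absorbing everything into a single constant (still denoted $M_{T'}$) completes the proof. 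There is no genuine obstacle here: the whole point of having tracked the $\potential$-independence of constants already in Lemmas~\ref{lem:approxSol_existence}--\ref{lem:approxSol_timederivative} and of having chosen the extension $\wvel$ independently of $\potential$ is precisely to make this concluding estimate essentially a matter of bookkeeping.
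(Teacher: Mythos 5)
Your proposal is correct and follows essentially the same route as the paper: bound $(\vvel,\Stens,\pt\vvel)$ via Remark~\ref{rem:weaksol_PDEmodified_bounds} (noting that $M_{T'}$ and $M_{T'}'$ there are $\potential$-independent), bound $\wvel$ via \eqref{est:extension} from Lemma~\ref{lem:ext.nonl.small.v}, and combine using $\Vvel=\vvel+\wvel$. The explicit tracking of $\potential$-independence in your argument matches the intent of the paper's terser proof.
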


\begin{proof}
By \eqref{est:extension} we have
\[
\begin{aligned}
\norm{\wvel}_{\LR{\infty}\np{0,T';\LR{2}(\Omega)}}
&+\norm{\wvel}_{\LR{2}\np{0,T';\HSR{1}(\Omega)}}
+\norm{\pt\wvel}_{\LR{1}\np{0,T';\np{\HSRNsigma{1}(\Omega)}'}}
\\
&\leq\Cc{c}(T')\bp{
\norm{\wvel}_{\LR{\infty}\np{0,T';\HSR{1}(\Omega)}}
+\norm{\pt\wvel}_{\LR{\infty}\np{0,T';\HSR{-1}(\Omega)}}
}\\
&\leq\Cc{c}(T')\bp{
\norm{g}_{\LR{\infty}\np{0,T;\HSR{1/2}(\partial\Omega)}}
+\norm{\pt g}_{\LR{\infty}\np{0,T;\HSR{-1/2}(\partial\Omega)}}
}.
\end{aligned}
\]
In view of Remark \ref{rem:weaksol_PDEmodified_bounds} and the identity $\Vvel=\vvel+\wvel$, 
this shows \eqref{est:weaksol_bounds}.
\end{proof}

\section{Generalized solutions for nonsmooth potentials}
\label{su:GenSolNonsmooth}

The primary purpose of this section is to prove our main result,
Theorem~\ref{thm:genP}, on the existence of generalized solutions in case of a
nonsmooth potential $\mathcal{P}$, see Section~\ref{ssec:proof.main}. Recall
that throughout this manuscript $\potential\colon\LRdev{2}(\Om)\to[0,\infty]$
is assumed to be convex and lower semicontinuous with $\potential(0)=0.$ As
outlined in Section~\ref{ssec:strategy}, we proceed by regularization,
approximating $\mathcal{P}$ by its Moreau envelope $\{\mathcal{P}_\ve\}$, which
allows us to take advantage of Theorem~\ref{thm:existence_weaksol} providing
existence for smooth potentials. The key to passing to the limit $\ve\to0$ in
the tensorial evolutionary variational inequality for the approximate solutions
is the convergence result in Lemma~\ref{le:SW(V)}.

\subsection{Weak versus generalized solutions}
\label{ssec:weak.vs.gen}

Here, we provide a basic consistency analysis concerning our new notion of
solution for rate-type viscoelastoplastic models involving a
nonsmooth potential in the tensorial evolution. In particular, we will show
that the weak solutions from Theorem~\ref{thm:existence_weaksol} obtained in
the case of a smooth potential satisfy a variational inequality (for $S$) and
hence are generalized solutions in the sense of Definition~\ref{def:varsol}.
This result will further serve as a basic ingredient in the proof of
Theorem~\ref{thm:genP}.
%
We will also provide sufficient regularity conditions for $(V,S)$ that allow us
to conclude that generalized solutions are already weak solutions in the sense
of Definition \ref{def:weaksol}.  For this purpose, we need an approximation
property for the induced potential $\mathscr{P}$ acting on Bochner functions
\begin{align}\label{eq:def.scrP}
	\mathscr{P}(\tilde
	S):=\int_0^{T'}\mathcal{P}(\tilde S(t))\dt,\qquad \tilde S\in\LR{2}(0,T';\LR{2}_\delta(\Omega)).
\end{align}
The approximation condition states:
\begin{align}
	\begin{aligned}
	\label{eq:calP.appr.prop}
	\forall\, \tilde S\in \LR{2}(0,T';\LR{2}_\delta(\Omega)) &\ \exists\, (\tilde S_n)_{n\in \N} \subset
	\TestFn :
	\\&\tilde S_n\rightharpoonup  \tilde S\;\; \tin\LR{2}(0,T';\LR{2}(\Omega))\text{ and }  \mathscr P(\tilde S_n) \to \mathscr P(\tilde S). 
\end{aligned}
\end{align}
This property is certainly satisfied 
if $\mathscr P(S)=\int_0^{T'}\!\!\!\int_\Omega \mathfrak P(\Stens(x,t))\dx\dt$ for 
a continuous convex function $\mathfrak{P}\colon\R^{3\times3}_\delta\to[0,\infty)$ with $\mathfrak P(0)=0$ and a quadratic upper bound.
In this case 
it suffices to choose an arbitrary sequence $(\tilde \Stens_n)$ converging to $\tilde \Stens$  strongly in $\LR{2}(0,T';\LR{2}(\Omega))$, because $\mathscr P$ is norm continuous.
%
If $\mathfrak P$ is the sum of such a function 
and an \textit{indicator function} 
\begin{equation*}
\iota_A(S):= \left\{\begin{array}{cl} 0 & \text{for } 
	S\in A, \\ 
		\infty & \text{for } S\not\in A
	\end{array}  \right.
\end{equation*}
for a closed and convex subset $A\subset \R^{3\times3}_\delta$,
a suitable sequence $(\tilde \Stens_n)$ can be constructed by means of mollification
 taking into account the convexity and closedness of the set $A$ as well as the fact that $\mathfrak P(0)=0$ (which implies $0\in A$).
In particular, \eqref{eq:calP.appr.prop} holds for the plasticity potential $\mathcal P$
defined in \eqref{eq:pot.example.intro}. 

\begin{lem}[Weak versus generalized solutions]\label{l:vi.aux}
 \mbox{}\\
  (A)   Assume that $\mathcal P\in \CR{1,1}(\LR{2}_\delta(\Omega))$. 
  If $(V,S)$ is a weak solution in the sense of Definition~\ref{def:weaksol}
  that additionally satisfies the partial energy dissipation inequality~\eqref{est:EnergyInequality_S_combined}, 
  then it is also a generalized solution in the sense of
  Definition \ref{def:varsol}. 
\\[0.2em] 
(B) If $(V,S)$ is a generalized solution in the sense of
  Definition \ref{def:varsol} with the additional regularity
 \begin{equation}
    \label{eq:Cond.Gen.Weak}
  S\in \HSR{1}(0,T';\LR{2}(\Omega)) \cap \LR{2}(0,T';\HSR{2}(\Omega))\cap 
  \LR{\infty}(0,T';\LR{\infty}(\Omega)) 
  \end{equation}
 for all $T'<T$,
and if $\mathcal P$ satisfies the approximation property \eqref{eq:calP.appr.prop},
then it is also a weak solution in the sense of Definition
\ref{def:weaksol}, where \eqref{eq:weaksol_S} is replaced by 
\begin{equation}
\label{eq:weak.nonsmo}
\begin{split}
\int_0^T\int_\Omega \bb{
&{-}\Stens:\pt\Psi
+\Vvel\cdot\grad\Stens:\Psi
+\big(\Stens\rottensor\np{\Vvel}-\rottensor\np{\Vvel}\Stens\big):\Psi
\\
&+\beta:\Psi
+\gamma\grad\Stens:\grad\Psi
-\eta\straintensor(\Vvel):\Psi
}\dx\dt
=\int_\Omega\Stens_0:\Psi(0)\dx
\end{split}
\end{equation}
for all $\Psi\in\CRcidev(\OmegaTzerocl)$, where
$\beta \in \LR{2}(0,T;\LR{2}_\delta(\Omega))$ with
$\beta(t) \in \partial\mathcal P(S(t))$ for a.a.\ $t\in (0,T)$. 
\end{lem}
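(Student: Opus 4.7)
For part~(A), suppose $(V,S)$ is a weak solution for a smooth $\mathcal P$ that additionally satisfies \eqref{est:EnergyInequality_S_combined}. Fix $T'\in(0,T)$ and $\tilde S\in\TestFn$. The weak $\LR{2}(\Omega)$-continuity of $S$ lets us test \eqref{eq:weaksol_S} on $[0,T']$ and produces a boundary contribution $\int_\Omega S_0:\tilde S(0)\dx-\int_\Omega S(T'):\tilde S(T')\dx$. Combined with the identity
\[
\int_0^{T'}\!\!\int_\Omega\partial_t\tilde S:(\tilde S-S)\dx\dt=\tfrac12\|\tilde S(T')\|_2^2-\tfrac12\|\tilde S(0)\|_2^2-\int_0^{T'}\!\!\int_\Omega\partial_t\tilde S:S\dx\dt,
\]
which lets us replace $-\int\partial_t\tilde S:S\dx\dt$ by its spatial counterpart via the weak form, the terms $V\cdot\nabla S:\tilde S$ and $(SW-WS):\tilde S$ cancel, and the pieces containing $\nabla S$ and $D(V)$ reorganize into $-\gamma\|\nabla S\|_{\LR{2}(\Omega\times(0,T'))}^2+\eta\int_0^{T'}\!\!\int_\Omega D(V):S\dx\dt$. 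The convexity inequality $\int\partial\mathcal P(S):\tilde S\dx\le\int\partial\mathcal P(S):S\dx+\mathcal P(\tilde S)-\mathcal P(S)$ contributes the $\mathcal P$-difference of~\eqref{eq:varin} and a remainder $-\int\partial\mathcal P(S):S\dx\dt$. The partial energy inequality \eqref{est:EnergyInequality_S_combined} bounds the sum of these remainders below by $\tfrac12\|S(T')\|_2^2-\tfrac12\|S_0\|_2^2$, and a completion of squares yields $\text{LHS of \eqref{eq:varin}}\ge\tfrac12\|\tilde S(T')-S(T')\|_2^2-\tfrac12\|\tilde S(0)-S_0\|_2^2$, from which \eqref{eq:varin} follows by dropping the first, nonnegative term.

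For part~(B), the regularity \eqref{eq:Cond.Gen.Weak} makes $S\in\TestFn$ admissible, so for $\Psi\in\TestFn$ and $\lambda>0$ one may substitute $\tilde S=S+\lambda\Psi$ into \eqref{eq:varin}. The cancellations $(SW-WS):S=0$ (pointwise, from $W$ skew and $S$ symmetric) and $\int_\Omega V\cdot\nabla S:S\dx=0$ (from $\divv V=0$ and $V\cdot n=0$) eliminate all contributions quadratic in $S$. Dividing by $\lambda$ and letting $\lambda\downarrow 0$---where by convexity the difference quotient $\lambda^{-1}[\mathcal P(S+\lambda\Psi)-\mathcal P(S)]$ is monotone non-decreasing in $\lambda>0$ and converges to the directional derivative---one obtains
\[
\mathscr P'(S;\Psi)\ge -J(\Psi)\qquad\forall\,\Psi\in\TestFn,
\]
where $J(\Psi):=\int[\partial_tS+V\cdot\nabla S+SW-WS-\eta D(V)]:\Psi\dx\dt+\gamma\int\nabla S:\nabla\Psi\dx\dt$. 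Using $S\in\LR{2}(0,T';\HSR{2}(\Omega))$, integration by parts in the last summand together with the definition $\beta:=-\partial_tS-V\cdot\nabla S-(SW-WS)+\eta D(V)+\gamma\Delta S\in\LR{2}(0,T';\LR{2}_\delta(\Omega))$ gives $\mathscr P'(S;\Psi)\ge\int\beta:\Psi\dx\dt$ for $\Psi$ vanishing on $\partial\Omega$.

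The approximation property \eqref{eq:calP.appr.prop} now extends this subgradient inequality to all of $\LR{2}(0,T';\LR{2}_\delta(\Omega))$, so that $\beta\in\partial\mathscr P(S)$, and the Rockafellar-type characterization of subdifferentials of integral functionals with a convex lsc integrand yields $\beta(t)\in\partial\mathcal P(S(t))$ for a.a.\ $t\in(0,T')$. A further analysis of the variational inequality with test functions of nonzero boundary trace produces the Neumann condition $\nabla S\cdot n=0$ on $\partial\Omega\times(0,T')$. The resulting pointwise identity $\partial_tS+V\cdot\nabla S+SW-WS+\beta-\eta D(V)-\gamma\Delta S=0$ in $\LR{2}$, integrated by parts in time (using $\Psi(T)=0$ and $S(0)=S_0$) and in space (using Neumann), then produces~\eqref{eq:weak.nonsmo}.

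\textbf{Expected main obstacle.} The principal challenge is promoting the directional-derivative inequality $\mathscr P'(S;\Psi)\ge\int\beta:\Psi\dx\dt$ obtained on interior test functions to the pointwise subdifferential relation $\beta(t)\in\partial\mathcal P(S(t))$ a.e.; this simultaneously requires the approximation property \eqref{eq:calP.appr.prop} (to enlarge the effective test class while preserving control of $\mathscr P$) and the $\HSR{2}$-regularity of $S$ (to identify $\beta$ as an $\LR{2}$ function through integration by parts). A related, delicate step is the derivation of the Neumann boundary condition in the nonsmooth regime, where the variational inequality alone does not immediately pin down $\nabla S\cdot n$.
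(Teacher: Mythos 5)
Your proposal takes essentially the same route as the paper for both parts. Part~(A) is argued exactly as in the paper's proof: extend the weak formulation \eqref{eq:weaksol_S} to non-compactly-supported test functions on $[0,T']$ (producing the boundary terms $\int_\Omega S_0:\tilde S(0)-S(T'):\tilde S(T')\,\dx$), apply the subdifferential inequality, subtract the partial energy inequality \eqref{est:EnergyInequality_S_combined}, complete the square with $\int\partial_t\tilde S:\tilde S$, and drop the nonnegative term $\tfrac12\|\tilde S(T')-S(T')\|_2^2$. For part~(B), the overall structure (define $\beta$ from the strong form of the equation under \eqref{eq:Cond.Gen.Weak}, use the Jaumann identity to rewrite \eqref{eq:varin}, reduce to a subdifferential-type inequality, enlarge the test class via \eqref{eq:calP.appr.prop}, read off \eqref{eq:weak.nonsmo}) also matches. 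A technical difference worth noting: you substitute $\tilde S = S + \lambda\Psi$ and send $\lambda\downarrow 0$, so the endpoint terms ($\sim\lambda^2$) vanish automatically and you obtain the subdifferential inequality directly for all $\Psi\in\TestFn$; the paper instead takes $\lambda=1$, first restricts to $\tilde R$ with $\tilde R(0)=\tilde R(T')=0$, and then removes this restriction via a separate monotone-convergence argument with interior cutoffs $\theta_j$. These routes are interchangeable. Your remark that a ``further analysis'' is needed to extract the Neumann condition $\nabla S\cdot\nvec=0$ is fair and applies equally to the paper: both your passage from $J(\Psi)$ to $\int\beta:\Psi$ and the paper's derivation of \eqref{eq:106} use the identity $\int_\Omega\nabla S:\nabla(\tilde S{-}S)\,\dx=-\int_\Omega\Delta S:(\tilde S{-}S)\,\dx$, which silently discards the boundary integral $\gamma\int_{\partial\Omega}(\nabla S\cdot\nvec):(\tilde S{-}S)\,\dsigma$; this requires a short separate argument (using the $\HSR{2}$-regularity and suitably localized perturbations $\lambda\Psi$ of small amplitude, for which the potential difference is $O(\lambda)$) that neither account writes out.
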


\begin{rem}
	By Proposition~\ref{prop:edin.genP}, generalized solutions with the extra regularity assumed in Lemma~\ref{l:vi.aux} (B) not only satisfy the weak formulation~\eqref{eq:weak.nonsmo} as asserted in the above lemma, but further fulfill the partial  energy inequality~\eqref{eq:edin.S.genP}, which is genuinely encoded in the evolutionary variational inequality.
\end{rem}

\begin{proof} \underline{Part (A):} Let $(V,S)$ be a weak solution. We
  have to derive the evolutionary variational inequality \eqref{eq:varin}.
    For this purpose,  we first show that the weak equation \eqref{eq:weaksol_S} for $S$
    implies a modified version
    that holds true for test functions $\tilde S\in \CRidev\np{\overline{\Omega}\times[0,T']}$, $T'<T$, not necessarily compactly supported in time. More precisely, we assert that 
   for all $T'<T$ and all $\tilde S\in \CRidev\np{\overline{\Omega}\times[0,T']}$
    \begin{equation}\label{eq:555}
    	\begin{aligned}
    		\int_0^{T'}\!\!\int_\Omega \bb{ {-}\Stens:\pt\tilde S
    			&+V\cdot\grad\Stens:\tilde S
    			+(\Stens\rottensor\np{V}-\rottensor\np{V}\Stens):\tilde S
    			\\
    			&+\partial\potential(\Stens):\tilde S +\gamma\grad\Stens:\grad\tilde S
    			-\eta\straintensor(\Vvel):\tilde S }\dx\dt
    		\\&\hspace{5em}=\int_\Omega\Stens_0:\tilde S(0)\dx
    		-\int_\Omega\Stens(T'):\tilde S(T')\dx.
    	\end{aligned}
    \end{equation}
The proof of this assertion follows from a  standard extension and approximation argument applied to the test functions $\tilde S$: given $\tilde S\in \CRidev\np{\overline{\Omega}\times[0,T']}$,
we can find an extension $\hat S\in \CRcidev(\OmegaTzerocl)$ of $\tilde S$. 
We then choose $\theta\in C^\infty(\mathbb{R})$, $\theta'\ge0$, $\theta=0$ on $(-\infty,-1]$, $\theta=1$ on $[0,\infty)$, let $\theta_{T',\delta}(t):=\theta(\frac{T'-t}{\delta})$ and define for $0<\delta\ll T-T'$ the function $\Psi_\delta(t,x):=\theta_{T',\delta}(t)\hat S(t,x)\in\CRcidev(\OmegaTzerocl)$, which satisfies $\Psi_\delta(t,\cdot)=\tilde S(t,\cdot)$ for all $t\in[0,T']$ and $\Psi_\delta(t,\cdot)=0$ for all $t\in[T'+\delta,T)$.
Inserting $\Psi_\delta$ as a test function in the weak formulation~\eqref{eq:weaksol_S}, 
performing standard manipulations and sending $\delta\to0$, we arrive at~\eqref{eq:555}. 
(For more details on the limit $\delta\to0$, we refer the proof of Prop.~\ref{prop:edin.genP} in Section~\ref{ssec:pf.eni}, where related arguments are carried out in a setting of lower regularity.)
  

Subtracting inequality~\eqref{est:EnergyInequality_S_combined} (at time $t=T'$)
we find, upon rearranging terms,
  \begin{equation}
   \nonumber 
    \begin{aligned}
      &\int_0^{T'}\!\!\int_\Omega [ -\Stens:\pt\tilde S
      +\partial\potential(\Stens):(\tilde S {-}S) +\gamma\grad\Stens:\grad(\tilde
      S {-} S) +V\cdot\grad\Stens:\tilde S
      \\ 
      &\qquad\qquad\qquad\qquad\quad
      +(\Stens\rottensor\np{V}-\rottensor\np{V}\Stens):\tilde S
      -\eta\straintensor(\Vvel):(\tilde S{-}S)\,]\dx\dt \\
     & \geq-\tfrac12\norm{\Stens_0}_2^2
      +\int_\Omega\Stens_0:\tilde S(0) \dx
      +\tfrac12\norm{S(T')}_2^2-\int_\Omega\Stens(T'):\tilde
      S(T')\dx.
    \end{aligned}
  \end{equation}
By the density of $\CRidev\np{\overline{\Omega}\times[0,T']}$ in $Z_{T'}$,  the last inequality continues to hold for all $\tilde S\in Z_{T'}$.
  The assertion is now obtained by adding
  $ \int_0^{T'}\!\int_\Omega \pt\tilde\Stens:\tilde S\,\dd x\dd t = \tfrac12
  \|\tilde S(T')\|_{2}^2 -\tfrac12 \|\tilde S(0)\|_{2}^2$,
  and using the fact that $  \tfrac{1}{2}\|\tilde S(T')-S(T')\|_{2}^2\ge0$ as well as the inequality $ \mathcal{P} (\tilde S(t)) - \mathcal{P} (S(t))\ge\int_\Om\partial \mathcal{P} (S(t)):(\tilde S(t){-}S(t))\,\dd x$.
\\[0.4em]
\underline{Part (B):} 
%
For a generalized solution $(V,S)$
with the smoothness as in \eqref{eq:Cond.Gen.Weak} we have
\begin{equation}
    \label{eq:beta.def}
\beta :=-\jaumannder{\Stens} 
	+\gamma\Delta\Stens +\eta \straintensor(\Vvel) \quad 
   \in \LR{2}([0,T'];\LRdev{2}(\Om)). 
\end{equation}
As a consequence of~\eqref{eq:varin}, we further note that $\mathscr{P}(S)=\int_0^{T'}\mathcal{P}(S)\,\dd t<\infty$.

Using the Zaremba--Jaumann identity \eqref{eq:Jaum.Identity} and the definition of
$\beta$, we find
\begin{align*}
&\int_0^{T'} \!\!\int_\Omega \Big( V\cdot \nabla S : \tilde S +
\big(SW(V)-W(V)S\big) : \tilde S \dx \dt 
\\
&\qquad
=\int_0^{T'} \!\!\int_\Omega \big(\jaumannder{S}{-}\partial_t S):\tilde S \dx
\dt = \int_0^{T'} \!\!\int_\Omega \big(\jaumannder{S}{-}\partial_t
S):(\tilde S{-}S) \dx \dt  
\\
&\qquad=\int_0^{T'} \!\!\int_\Omega \big({-}\partial_t S - \beta +\gamma \Delta S +
\eta \straintensor(\Vvel) \big): (\tilde S{-}S) \dx \dt . 
\end{align*} 
Inserting this identity into \eqref{eq:varin}, we are left with the variational
inequality
\begin{align}\label{eq:106}
	\begin{aligned}
		\int_0^{T'} \!\!\int_\Omega \Big( (\partial_t \tilde S{-}\partial_t S) : (\tilde
		S{-}S) - \beta : (\tilde S{-}S) \Big) \dx \dt 
		&+\mathscr{P}(\tilde S)-\mathscr{P}(S)
		\\
		&
		\geq      
		-\tfrac{1}{2}\|\tilde S( 0)-\Stens_0\|_{2}^2
	\end{aligned}
\end{align}
 for all $\tilde S\in \TestFn$,
where we recall the definition of $\mathscr P$ in~\eqref{eq:def.scrP}.
Given $\tilde R\in \TestFn$ with $\tilde R(0)=0=\tilde R(T')$, we choose in~\eqref{eq:106}
the test function $\tilde S=S+\tilde R$, which by~\eqref{eq:Cond.Gen.Weak} lies in $\TestFn$ and moreover satisfies $\tilde S(0)=S_0$,  to infer
\begin{align}
	\label{eq:scrP}
	\mathscr{P}(S{+}\tilde R)-\mathscr{P}(S)\geq \int_0^{T'}\!\!\int_\Om 
	\beta:\tilde R \dx \dt
\end{align}
for all such $\tilde R$. 

We assert that by means of an approximation argument, ineq.~\eqref{eq:scrP} can be extended to general $\tilde R\in \TestFn$, not necessarily vanishing at the boundary of $(0,T')$. To see this, we pick a sequence $\{\theta_j\}\subset C^\infty_0((0,T'))$ with $0\le \theta_{j}\le\theta_{j+1}\le 1$ for all $j\in\mathbb{N}$ and such that  $\lim_{j\to\infty}\theta_{j}(t)=1$ for all $t\in(0,T')$. We then infer from ineq.~\eqref{eq:scrP} for general $\tilde R\in \TestFn$
\begin{align}
	\label{eq:scrPtheta}
	\int_0^{T'}\big(\mathcal{P}(S{+}\theta_j\tilde R)-\mathcal{P}(S)\big)\,\dd t\geq \int_0^{T'}\!\!\theta_j(t)\!\int_\Om 
	\beta:\tilde R \dx \dt. 
\end{align}
Since $\mathcal{P}$ is convex, we have for every $\theta=\theta_j(t)\in[0,1]$
\[
\mathcal{P}(S{+}\theta\tilde R)-\mathcal{P}(S)=\mathcal{P}\big(\theta(S{+}\tilde R)+(1-\theta)S\big)-\mathcal{P}(S)
\le \theta \mathcal{P}(S{+}\tilde R)-\theta \mathcal{P}(S).
\]
Inserting this inequality into~\eqref{eq:scrPtheta} gives
\begin{align*}
	\int_0^{T'}\theta_j(t) \mathcal{P}(S{+}\tilde R)\,\dd t-\int_0^{T'}\theta_j(t)\mathcal{P}(S)
	\,\dd t
	\geq \int_0^{T'}\!\!\theta_j(t)\!\int_\Om 
	\beta:\tilde R \dx \dt. 
\end{align*}
Invoking the monotone convergence theorem for the first term in the last line and using dominated convergence for the remaining two time integrals, we can take the limit $j\to\infty$ in the last inequality and arrive at~\eqref{eq:scrP}
for general $\tilde R\in \TestFn$.

Due to the approximation property \eqref{eq:calP.appr.prop} of $\mathscr
P$, we can further extend~\eqref{eq:scrP} to general 
$R \in  \LR{2}(0,T';\LR{2}_\delta(\Omega))$. 
Indeed, letting $\tilde S:=S+R \in  \LR{2}(0,T';\LR{2}_\delta(\Omega))$, property~\eqref{eq:calP.appr.prop} provides us with a sequence $(\tilde S_n)\subset\TestFn$ such that $ \tilde S_n \rightharpoonup
S+R$ in $\LR{2}(0,T';\LR{2}_\delta(\Omega))$ and  $\mathscr P(\tilde S_n)\to \mathscr P(S+R)$. Hence, inserting
$\tilde R=\tilde R_n:=\tilde S_n-S$ in \eqref{eq:scrP} and passing
to the limit $n\to \infty$ we obtain
\[
 \mathscr{P}(S{+}R) \geq \mathscr{P}(S) + \int_0^{T'}\!\!\int_\Om 
  \beta: R \dx \dt \quad \text{ for all }
  R \in \LR{2}(0,T';\LR{2}_\delta(\Omega)) .
\]
But this is exactly the definition of $\beta \in \partial \mathscr P(S)$, and
the special definition of $\mathscr P$ in terms of $\mathcal P$ (cf.~\eqref{eq:def.scrP}) implies
$\beta(t)\in \partial\mathcal P(S(t))$ a.e.\ on $(0,T')$.

The definition of $\beta$ in \eqref{eq:beta.def} implies the desired weak equation \eqref{eq:weak.nonsmo}.
\end{proof}

\subsection{Properties of the Moreau envelope}

Recall the definition of the Moreau envelope $\{\mathcal{P}_\ve\}$ in~\eqref{eq:YosMorEnvel}.
As an immediate consequence of~\eqref{eq:YosMorEnvel}, we find that  
$\mathcal{P}_\ve(S)\le \mathcal{P}(S)$ and hence  
\begin{align}\label{eq:101}
	\limsup_{\ve\to0}\int_0^T\mathcal{P}_\ve(S(t))\,\dd t\le
	\int_0^T\mathcal{P}(S(t))\,\dd t\quad\text{ for all }S\in
	\LR{2}(0,T;\LRdev{2}(\Omega)). 
\end{align}
The proof of Theorem~\ref{thm:genP} further makes use of the following version of the classical approximation properties of the Moreau envelope \cite{BC_2017,Roubicek_2013}. 
\begin{lem}\label{l:wlsc}
	Let $\potential$ be as in~\eqref{eq:propP}.  The Moreau
	envelope $\{\mathcal{P}_\ve\}$ of $\mathcal{P}$, as defined in~\eqref{eq:YosMorEnvel}, satisfies the inequality
	\begin{align}
		\label{eq:wlsc}
		\liminf_{\ve\to0} \int_0^T\mathcal{P}_\ve(S_\ve(t))\,\dd t
		\ge 	\int_0^T\mathcal{P}(S(t))\,\dd t 
	\end{align}
	whenever $S_\ve\rightharpoonup S$ in $\LR{2}(0,T;\LRdev{2}(\Omega))$.
\end{lem}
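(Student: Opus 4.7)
The plan is to combine the pointwise monotone convergence of the Moreau envelope with the weak lower semicontinuity of each approximating Bochner functional. I would introduce, for every fixed parameter $\sigma>0$, the auxiliary functional
\[
\mathscr{P}_\sigma(R) := \int_0^T \mathcal{P}_\sigma(R(t))\,\dd t,\qquad R\in \LR{2}(0,T;\LRdev{2}(\Omega)),
\]
and likewise $\mathscr{P}(R)$ for the unregularized potential. The key classical facts about the Moreau envelope that I would invoke (see, e.g., Bauschke--Combettes, Section~12.4) are that $\mathcal{P}_\sigma\colon\LRdev{2}(\Omega)\to[0,\infty)$ is convex and (Lipschitz) continuous, that $\mathcal{P}_\sigma\le\mathcal{P}$, and — crucially — that the map $\sigma\mapsto \mathcal{P}_\sigma(R)$ is monotone nonincreasing with $\mathcal{P}_\sigma(R)\nearrow \mathcal{P}(R)$ as $\sigma\downarrow 0$, for every $R\in \LRdev{2}(\Omega)$.

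Next, I would observe that each $\mathscr{P}_\sigma$ is convex on $\LR{2}(0,T;\LRdev{2}(\Omega))$ and strongly lower semicontinuous on that space: convexity is immediate from convexity of $\mathcal{P}_\sigma$, while strong l.s.c.\ follows from the continuity of $\mathcal{P}_\sigma$ together with Fatou's lemma applied along a strongly convergent (hence, up to a subsequence, pointwise a.e.\ convergent) sequence. By Mazur's lemma, a convex strongly l.s.c.\ functional on a reflexive Banach space is weakly sequentially lower semicontinuous, so
\[
\mathscr{P}_\sigma(S)\le \liminf_{\varepsilon\to 0}\mathscr{P}_\sigma(S_\varepsilon)
\]
whenever $S_\varepsilon\rightharpoonup S$ in $\LR{2}(0,T;\LRdev{2}(\Omega))$.

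The main step is then a two-parameter limit using the monotonicity. Fix $\sigma>0$; for every $\varepsilon\in(0,\sigma]$ the pointwise inequality $\mathcal{P}_\varepsilon\ge\mathcal{P}_\sigma$ yields $\mathscr{P}_\varepsilon(S_\varepsilon)\ge\mathscr{P}_\sigma(S_\varepsilon)$, so combining with the weak l.s.c.\ just established,
\[
\liminf_{\varepsilon\to 0}\int_0^T\mathcal{P}_\varepsilon(S_\varepsilon(t))\,\dd t
\;\ge\;\liminf_{\varepsilon\to 0}\mathscr{P}_\sigma(S_\varepsilon)
\;\ge\;\mathscr{P}_\sigma(S)
\;=\;\int_0^T\mathcal{P}_\sigma(S(t))\,\dd t.
\]
Finally, I would let $\sigma\downarrow 0$: since $\mathcal{P}_\sigma(S(t))\nearrow \mathcal{P}(S(t))$ for a.e.\ $t$ and $\mathcal{P}_\sigma\ge 0$, the monotone convergence theorem gives $\mathscr{P}_\sigma(S)\nearrow \mathscr{P}(S)$, which is exactly the desired inequality~\eqref{eq:wlsc}.

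The only subtle point is securing the weak sequential lower semicontinuity of $\mathscr{P}_\sigma$; beyond that, the argument is a clean interplay of the monotonicity of the Moreau envelope in $\sigma$ and monotone convergence. No compactness of $(S_\varepsilon)$ is required, which is the reason this estimate can be applied to the weak limits produced in the Galerkin-type construction for nonsmooth potentials.
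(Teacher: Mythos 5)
Your argument is correct and is essentially the paper's proof: both rely on the monotonicity $\mathcal{P}_\ve\ge\mathcal{P}_\sigma$ for $\ve\le\sigma$, the weak lower semicontinuity of the fixed-$\sigma$ Bochner functional $\mathscr{P}_\sigma$, and monotone convergence as $\sigma\downarrow 0$. The only cosmetic difference is the justification of weak l.s.c.\ of $\mathscr{P}_\sigma$: you combine convexity with strong l.s.c.\ obtained from Fatou's lemma along an a.e.-convergent subsequence, whereas the paper combines convexity with strong continuity obtained from Nemytskii-operator theory together with the growth bound $0\le\mathcal{P}_\sigma\le\|\cdot\|_{\LRdev{2}(\Omega)}^2/(2\sigma)$; both routes are sound.
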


\begin{proof}
	Let $\delta>\ve>0$. Then, by definition,
	$\mathcal{P}_\ve\ge\mathcal{P}_\delta$.  Hence
	\begin{align}
		\label{eq:103}
		\liminf_{\ve\to0} \int_0^T\!\mathcal{P}_\ve(S_\ve(t))\,\dd t
		\ge 	 \liminf_{\ve\to0} \int_0^T\!\mathcal{P}_\delta(S_\ve(t))\,\dd t
		\ge \int_0^T\!\mathcal{P}_\delta(S(t))\,\dd t.
	\end{align}
	The second step follows from the fact that for $\delta>0$ the functional
	\begin{align*}
		F_\delta:	\LR{2}(0,T;\hs)\ni S\mapsto \int_0^T\mathcal{P}_\delta(S(t))\,\dd t
	\end{align*}
	is convex and continuous, and thus weakly lower semicontinuous.  The
	convexity of $F_\delta$ is inherited from the convexity of
	$\mathcal{P}_\delta$, while continuity follows from standard theory on
	Nemytskii operators (see e.g.~\cite[Theorem~1.43]{Roubicek_2013}) together
	with the growth condition
	$ 0\le \mathcal{P}_\delta(S)\le \|S\|_{\hs}^2/(2\delta)$, which is a
	consequence of the definition of the Moreau envelope.
	
	To show the assertion, it now remains to prove that
	\begin{align}
		\label{eq:105}
		\lim_{\delta\to0}\int_0^T\!\mathcal{P}_\delta(S(t))\,\dd t 
		= \int_0^T\!\mathcal{P}(S(t))\,\dd t.
	\end{align}
	By~\cite[Proposition~12.33 (ii)]{BC_2017},
	$\mathcal{P}_\delta(S(t))\to \mathcal{P}(S(t))$ for a.e.~$t\in(0,T)$. The
	nonnegativity of $\mathcal{P}_\delta$ and Beppo Levi's monotone
	convergence imply the identity~\eqref{eq:105}. Together with \eqref{eq:103}
	the desired assertion \eqref{eq:wlsc} follows. 
\end{proof}

\subsection{Proof of the main result (Theorem~\ref{thm:genP})}\label{ssec:proof.main}

In this subsection, we will prove our main theorem. 
Let us start by highlighting the following elementary result, which presents the crucial idea for passing to
the limit, along approximate solutions, in the tensorial evolutionary variational equality and in particular in the
nonlinear terms arising from the Zaremba--Jaumann
derivative. 

\begin{lem} \label{le:SW(V)}
Let $V_\ve=(V^\ve_i)$ and $S_\ve=(S^\ve_{jk})$ 
satisfy the conditions 
\[
V_\ve \to  V \text{ in } \LR 2(\Om\tim (0,T')), 
\qquad 
V_\ve \rightharpoonup V \text{ and }  S_\ve \rightharpoonup S \text{ in } \LR
2(0,T';\HSR{1}(\Omega)),
\]
let $V_\ve |_{\partial \Omega} = g \in \LR{2}(0,T';\LR{2}(\partial
\Omega))$ be fixed,
and assume that $\|S_\ve\|_{\LR{\frac{10}{3}}(\Om\tim (0,T'))}\le C$.

Then, for all $i,j,k,l\in \{1,2,3\}$  we have 
\begin{equation}
  \label{eq:Jaum.viaIBP}
  \lim_{\ve \to 0} \int_0^{T'} \!\! \int_\Om S^\ve_{ij} \partial_k V^\ve_l \,\psi \dx \dt
  = \int_0^{T'} \!\! \int_\Om S_{ij} \partial_k V_l \,\psi \dx \dt 
\end{equation}
for all $\psi \in \LR 5(\Om\tim (0,T'))$. 
\end{lem}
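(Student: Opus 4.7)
My plan is to overcome the central difficulty of \eqref{eq:Jaum.viaIBP}---that it asks for convergence of a product of two only weakly convergent sequences, $S^\ve_{ij}$ (weakly in $\LR{2}(0,T';\HSR{1}(\Omega))$) and $\partial_k V^\ve_l$ (weakly in $\LR{2}$)---by shifting the spatial derivative off $V^\ve_l$ via integration by parts, so that the strong $\LR{2}$-convergence of $V^\ve$ can be exploited. Since $\psi\in\LR{5}$ is not differentiable, I first approximate it by a sequence $\psi^\delta\in\CRci(\Om\times(0,T'))$ with $\psi^\delta\to\psi$ in $\LR{5}(\Om\times(0,T'))$. The generalized Hölder inequality with exponents $\tfrac{3}{10}+\tfrac{1}{2}+\tfrac{1}{5}=1$, combined with the uniform bounds $\norm{S^\ve}_{\LR{10/3}(\Om\times(0,T'))}\le C$ and $\norm{\partial_k V^\ve_l}_{\LR{2}(\Om\times(0,T'))}\le C$ (the latter from $V_\ve\rightharpoonup V$ in $\LR{2}(0,T';\HSR{1}(\Omega))$), then yields
\[
\Bb{\int_0^{T'}\!\!\int_\Om S^\ve_{ij}\,\partial_k V^\ve_l\,(\psi{-}\psi^\delta)\dx\dt} \le C\,\norm{\psi-\psi^\delta}_{\LR{5}(\Om\times(0,T'))},
\]
together with the analogous bound for the limiting integrand. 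It therefore suffices to prove \eqref{eq:Jaum.viaIBP} for a fixed $\psi^\delta\in\CRci(\Om\times(0,T'))$.

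For such $\psi^\delta$, I will integrate by parts in $x_k$ (valid since $S^\ve_{ij},V^\ve_l\in\HSR{1}(\Omega)$, with no boundary contribution because $\psi^\delta$ vanishes near $\partial\Omega$) to obtain
\[
\int_0^{T'}\!\!\int_\Om S^\ve_{ij}\,\partial_k V^\ve_l\,\psi^\delta\dx\dt
= -\int_0^{T'}\!\!\int_\Om V^\ve_l\bb{(\partial_k S^\ve_{ij})\,\psi^\delta + S^\ve_{ij}\,\partial_k\psi^\delta}\dx\dt.
\]
This is the crucial redistribution step: the troublesome weak--weak product has been replaced by two weak--strong pairings. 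Indeed, in the first summand on the right, $V^\ve_l\,\psi^\delta\to V_l\,\psi^\delta$ strongly in $\LR{2}(\Om\times(0,T'))$ (by the hypothesized strong convergence of $V^\ve$) while $\partial_k S^\ve_{ij}\rightharpoonup\partial_k S_{ij}$ weakly in $\LR{2}$; in the second, $V^\ve_l\,\partial_k\psi^\delta\to V_l\,\partial_k\psi^\delta$ strongly in $\LR{2}$ while $S^\ve_{ij}\rightharpoonup S_{ij}$ weakly in $\LR{2}$ (which follows from the $\HSR{1}$-weak convergence of $S^\ve$). Hence each integral converges to its natural limit.

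Finally, I would reverse the integration by parts in the limit---again with no boundary term---to recover $\int_0^{T'}\!\!\int_\Om S_{ij}\,\partial_k V_l\,\psi^\delta\dx\dt$, and then send $\delta\to 0$ using the uniform $\LR{5}$-estimate from the first step. The main obstacle is conceptual rather than technical: it is the weak--weak product, which is dissolved by the integration by parts. I note that the boundary hypothesis $V_\ve|_{\partial\Omega}=g$ is in fact not required for this proof strategy, because working with interior-compactly-supported $\psi^\delta$ makes all boundary terms vanish identically on both sides; the assumption plays its role elsewhere, in applications of the lemma where test functions do not vanish at $\partial\Omega$.
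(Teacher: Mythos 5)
Your proof is correct and the central mechanism---integrating by parts in $x_k$ to trade the weak--weak product $S^\ve_{ij}\partial_k V^\ve_l$ for weak--strong pairings against the strongly convergent $V^\ve_l$, followed by density in $\LR{5}$ using the uniform $\LR{5/4}$ bound on $S^\ve_{ij}\partial_k V^\ve_l$---is exactly the paper's. The one genuine difference is the choice of dense test class: the paper works with $\psi\in\CR{1}(\overline{\Omega}\times[0,T'])$, so the integration by parts produces a boundary term $\int_0^{T'}\!\int_{\partial\Omega}S^\ve_{ij}g_l\nvec_k\psi\,\dsigma\dt$, whose limit they handle via the fixed boundary datum $V_\ve|_{\partial\Omega}=g$ and the weak continuity of the trace $\HSR{1}(\Omega)\to\LR{2}(\partial\Omega)$; you instead use $\psi^\delta\in\CRci(\Omega\times(0,T'))$, which kills the boundary contribution outright. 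Since $\CRci(\Omega\times(0,T'))$ is dense in $\LR{5}(\Omega\times(0,T'))$, your version is fully rigorous and slightly leaner: it makes the trace argument unnecessary and, as you observe, renders the hypothesis $V_\ve|_{\partial\Omega}=g$ superfluous for this lemma. The paper's choice buys the statement directly for boundary-touching $\psi$ without re-invoking density, which matches the spirit of how the lemma is used, but as a matter of proof economy your route is at least as clean.
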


\begin{proof}
	We first show that~\eqref{eq:Jaum.viaIBP} holds for $\psi\in C^1(\overline{\Om}\times[0,T'])$.
	In this case, integration by parts with respect to the spatial variable gives
\[
\int_0^{T'} \!\! \int_\Om S^\ve_{ij} \partial_k V^\ve_l \,\psi \dx\dt = 
\int_0^{T'} \!\! \int_{\partial\Om} S^\ve_{ij}  g_l \nvec_k \,\psi \,\dd \sigma\dt -
\int_0^{T'} \!\! \int_\Om \partial_k \big( S^\ve_{ij}\,\psi\big)  V^\ve_l  \dx \dt, 
\]
where we have already exploited the boundary conditions $V_\ve = g$ on
$\partial\Om$.

Because of the continuity of the trace operator from $\HSR{1}(\Om)$ to
$\LR{2}(\partial\Om)$, we have $S_\ve\rightharpoonup S$ in
$\LR{2}(0,T';\LR{2}(\partial\Om))$ and can pass to the limit in the first term
on the right-hand side. 
For the last term we use the weak convergence of $S_\ve $ to $S$ in
$\LR{2}(0,T';\HSR{1}(\Omega))$ as well as the strong convergence of $V_\ve$ to $V$ in $\LR 2(\Om\tim (0,T'))$. 
Undoing the spatial integration by parts, we obtain the desired result for $\psi\in \CR{1}(\overline{\Om}\times[0,T'])$.

The validity of~\eqref{eq:Jaum.viaIBP} for general $\psi\in \LR 5(\Om\tim (0,T'))$
is now a consequence of the density of $\CR{1}(\overline{\Om}\times[0,T'])$ in $\LR 5(\Om\tim (0,T'))$ and the fact that, by H\"older's inequality (with $\frac{3}{10}+\frac{1}{2}=\frac{4}{5}$), 
the sequence $ \{S^\ve_{ij} \partial_k V^\ve_l\}_\ve$ is $\ve$-uniformly bounded in $\LR{ \frac{5}{4}}(\Om\tim (0,T'))$.
\end{proof}

We are now in a position to complete the proof of Theorem \ref{thm:genP} and show
existence of generalized solutions to \eqref{sys:PDE}.

\begin{proof}[Proof of Theorem~\ref{thm:genP}]\label{p:proofThmP}
  For $\ve\in(0,1]$ consider the Moreau envelope $\mathcal{P}_\ve$ for 
  $\mathcal{P}$ as introduced in Section \ref{sec:Preliminaries}, and denote by
  $(V_\ve,S_\ve)$ the weak solution constructed in
  Theorem~\ref{thm:existence_weaksol} with $\mathcal{P}$ replaced by
  $\mathcal{P}_\ve$.  By estimate~\eqref{est:weaksol_bounds}, we have the
  $\ve$-uniform bound
  \begin{equation}\label{est:eps.uniform}
    \begin{aligned}
      &\norm{\Vvel_\ve}_{\LR{\infty}\np{0,T';\LR{2}(\Omega)}}
      +\norm{\Vvel_\ve}_{\LR{2}\np{0,T';\HSR{1}(\Omega)}}
      +\norm{\pt\Vvel_\ve}_{\LR{1}\np{0,T';\np{\HSRNsigma{1}(\Omega)}'}}
      \\
      &\qquad+\norm{\Stens_\ve}_{\LR{\infty}\np{0,T';\LR{2}(\Omega)}}
      +\norm{\Stens_\ve}_{\LR{2}\np{0,T';\HSR{1}(\Omega)}}
      +\int_0^{T'}\!\!\int_\Omega\potential_\ve(\Stens_\ve)\dx\dt \leq M_{T'}
    \end{aligned}
  \end{equation}
  for all $T'<T$.  Hence, there exists a sequence $\ve\to0$ (not relabeled)
  and a pair $(V,S)$ such that for all $T'<T$ one has $(V,S)\in
  \LHprime\times\XTprime$ and
\[
\begin{aligned}
	\Vvel_\ve&\rightharpoonup\Vvel && \tin \LR{2}(0,T';\HSR{1}(\Omega))^3,
	\\
	\Stens_\ve&\rightharpoonup\Stens && \tin \LR{2}(0,T';\HSR{1}(\Omega))^{3\times3},
	\\
	\Vvel_\ve&\overset{\ast}{\rightharpoonup}\Vvel && \tin \LR{\infty}(0,T';\LRsigma{2}(\Omega)),
	\\
	\Stens_\ve&\overset{\ast}{\rightharpoonup}\Stens && \tin \LR{\infty}(0,T';\LRdev{2}(\Omega)),
	\\	\Vvel_\ve&\to\Vvel && \tin \LR{2}(0,T';\LR{2}(\Omega))^3,
\end{aligned}
\]
where, as in the proof of Theorem \ref{thm:existence_PDEmodified}, the strong
convergence of $(\Vvel_\ve)$ is obtained from an Aubin--Lions compactness
result.

The passage to the limit $\ve\to0$ in the weak form~\eqref{eq:weaksol_V} of the
equation for the velocity field $V_\ve$ follows from standard arguments based
on the above convergence properties. As a result, the limiting vector field $V$
satisfies eq.~\eqref{eq:weaksol_V}. Moreover, the fact that
$\restriction{\Vvel_\ve}{\partial\OmegaT}=g$ combined with the above
convergence properties easily yields $\restriction{\Vvel}{\partial\OmegaT}=g$.
Thus, it remains to show that $S$ satisfies inequality~\eqref{eq:varin} for all
$\tilde S\in\TestFn$.
	
By Lemma~\ref{l:vi.aux} (A), $S_\ve$ satisfies the variational inequality
\begin{equation}\label{eq:207}
  \begin{multlined}
    \int_0^{T'}\!\int_\Omega \pt\tilde\Stens:(\tilde S-\Stens_\ve)
    +\gamma\grad\Stens_\ve:\grad(\tilde S-\Stens_\ve)\,\dd x\dd t +
    \int_0^{T'}\potential_\ve(\tilde\Stens)-\potential_\ve(\Stens_\ve)\,\dd t
    \\
    +\int_0^{T'}\!\int_\Omega V_\ve\cdot\grad\Stens_\ve:\tilde S
    +(\Stens_\ve\rottensor\np{V_\ve} -\rottensor\np{V_\ve}\Stens_\ve):\tilde S
    -\eta\straintensor(\Vvel_\ve):(\tilde S-\Stens_\ve) \dx\dd t \\ 
\hspace{4em}\geq
 -\tfrac{1}{2}\|\tilde S( 0)-\Stens_0\|_{2}^2.
  \end{multlined}
\end{equation}
We will deduce ineq.~\eqref{eq:varin} by estimating the $\limsup_{\ve\to0}$ of
the left-hand side.
	
First, the weak convergence $S_\ve\rightharpoonup S$ in
$\LR{2}(0,{T'};\HSR{1}(\Omega))^{3\times3}$ implies that
\begin{multline*}
  \int_0^{T'}\!\int_\Omega \pt\tilde\Stens:(\tilde S-\Stens)
  +\gamma\grad\Stens:\grad(\tilde S-\Stens)\,\dd x\dt \\\ge \limsup_{\ve\to0}
  \int_0^{T'}\!\int_\Omega \pt\tilde\Stens:(\tilde S-\Stens_\ve)
  +\gamma\grad\Stens_\ve:\grad(\tilde S-\Stens_\ve)\dx \dt,
\end{multline*}
where we used weak upper semicontinuity of the concave quadratic term.
	
For the second term on the left-hand side of~\eqref{eq:207}, we use the bound
\begin{align*}
  \limsup_{\ve\to0}
  \int_0^{T'}\!\!\big(\potential_\ve(\tilde\Stens)-\potential_\ve(\Stens_\ve)\big)\dt 
  \le	\int_0^{T'}\!\!\big(\potential(\tilde\Stens)-\potential(\Stens)\big)\dt
\end{align*}
which is consequence of Lemma~\ref{l:wlsc} and inequality \eqref{eq:101}.

Further note that $V_\ve \to V$ in $\LR{2}(0,T';\LR{2}(\Omega))$ and 
$S_\ve \rightharpoonup S$ in $ \LR{2}(0,T';\HSR{1}(\Omega)) $ imply 
\begin{align*}
  \lim_{\ve\to0}\int_0^{T'}\!\int_\Omega V_\ve\cdot\grad\Stens_\ve:\tilde S \dx\dd t
  =\int_0^{T'}\!\int_\Omega V\cdot\grad\Stens:\tilde S \dx\dt.
\end{align*}

The term $(\Stens_\ve\rottensor\np{V_\ve}
-\rottensor\np{V_\ve}\Stens_\ve):\tilde S + \eta \straintensor(\Vvel_\ve) :S_\ve  $
consists of a finite linear 
combination of terms handled in Lemma \ref{le:SW(V)}. 
This lemma can be applied thanks to the convergence properties of $\np{V_\ve,S_\ve}$ and the interpolation~\eqref{eq:interpol} ensuring the boundedness of $(S_\ve)$ in $\LR{\frac{10}{3}}(\Om\tim (0,T'))$. 
 Hence we can pass to the
limit with all remaining parts in the left-hand side.

The above observations allow us to estimate the $\limsup_{\ve\to0}$ of the
left-hand side of~\eqref{eq:207} above by the left-hand side of
inequality~\eqref{eq:varin}.  
Hence $(\Vvel,\Stens)$ is a generalized solution to
\eqref{sys:PDE}--\eqref{eq:InitialConditions} in the sense of Definition
\ref{def:varsol}.

Since \eqref{eq:GenSol.EDI} directly follows from \eqref{est:EnergyInequality_v_combined}
and \eqref{eq:edin.S.genP}, 
and the latter follows from Proposition \ref{prop:edin.genP},
which is proved below,
it remains to establish the partial energy-dissipation inequality
\eqref{est:EnergyInequality_v_combined}. 
But this is a simple consequence of the previously
established partial energy-dissipation inequality. In particular, we note that the
boundary extension $w$ constructed for Theorem~\ref{thm:existence_weaksol}
depends only on $g$ and, hence, is independent of the regularization parameter
$\ve$. Thus, we can use the partial energy-dissipation inequality
\eqref{est:EnergyInequality_v_combined} for $v_\ve=V_\ve-w$ and $S_\ve$. With the given
weak and strong convergences, we can pass to the limit $\ve \to 0$ and obtain
the corresponding inequality for the limits $v=V{-}w$ and $S$. 
\end{proof}

Finally, we show that, under suitable decay assumptions on the data,
the total energy of the solution remains bounded.

\begin{proof}[Proof of Corollary \ref{cor:GlobalSolutionsEnergy}]
The pair $\np{\vvel,\Stens}$ satisfies the energy inequality \eqref{eq:GenSol.EDI},
and $\wvel$ is constructed in such a way that Assumption \ref{asm:extension} 
\ref{item:extension_iii} is satisfied.
Proceeding as in the proof of Theorem \ref{thm:existence_PDEmodified},
we can then estimate the right-hand side of \eqref{eq:GenSol.EDI}
and use an absorption argument to conclude
\begin{equation}\label{est:energy.bounded.proof}
\begin{aligned}
&\norm{\vvel}_{\LR{\infty}(0,T';\LR{2}(\Omega))}^2
+\norm{\grad\vvel}_{\LR{2}\np{\OmegaTprime}}^2
+\norm{\Stens}_{\LR{\infty}(0,T';\LR{2}(\Omega))}^2
\\
&\qquad\qquad\qquad\qquad\qquad\qquad
+\norm{\grad\Stens}_{\LR{2}\np{\OmegaTprime}}^2
+\int_0^{T'}\potential(\Stens)\dt
\\
&\quad
\leq C\bp{
\norm{\Vvel_0-\wvel(0)}_2^2+\norm{\Stens_0}_2^2 
+\norm{F_0}_{\LR{1}(0,T';\LR{2}(\Omega))}^2
+\norm{F_1}_{\LR{2}(\OmegaTprime)}^2 
\\
&\qquad\qquad\quad
+\norm{\widetilde{F}_1}_{\LR{2}(\OmegaTprime)}^2 
+\norm{\wvel}_{\LR{4}(\OmegaTprime)}^4
+\norm{\grad\wvel}_{\LR{1}(0,T';\LR{2}(\Omega))}^2}
\end{aligned}
\end{equation}
for any $T'\in(0,\infty)$.
To further estimate the right-hand side, recall from
the proof of Theorem \ref{thm:existence_weaksol} in Subsection \ref{subsec:existence.weaksol}
that $\wvel\in\CR{0}(0,T';\LR{2}(\Omega))$
and $\Div\widetilde{F}_1=\pt\wvel-\mu\Delta\wvel\in\LR{2}(0,T';\HSR{-1}(\Omega))$.
We further obtain
\[
\norm{\wvel(0)}_{2}
\leq\norm{\wvel}_{\LR{\infty}(0,T';\LR{2}(\Omega))}
\leq C\norm{g}_{\LR{\infty}(0,T';\HSR{1/2}(\partial\Omega))}
\]
and
\[
\begin{aligned}
\norm{\widetilde{F}_1}_{\LR{2}(\OmegaTprime)}
&\leq C\bp{\norm{\pt\wvel}_{\LR{2}(0,T';\HSR{-1}(\Omega))}
+\norm{\wvel}_{\LR{2}(0,T';\HSR{1}(\Omega))}}
\\
&\leq C\bp{\norm{\pt g}_{\LR{2}(0,T';\HSR{-1/2}(\partial\Omega))}
+\norm{g}_{\LR{2}(0,T';\HSR{1/2}(\partial\Omega))}},
\end{aligned}
\]
where we used \eqref{est:w.lin} in the respective last estimate.
By means of \eqref{est:w.lin} 
we can also estimate the last two terms in \eqref{est:energy.bounded.proof}
in terms of $g$.
Then a standard interpolation argument shows that 
the right-hand side of \eqref{est:energy.bounded.proof} 
is bounded by a constant independent of $T'\in(0,\infty)$.
This shows $\np{\vvel,\Stens}\in\LH\times\XT$ for $T=\infty$.
Since we have $\Vvel=\vvel+\wvel$, and $\wvel\in\LH$ 
for $T=\infty$
by \eqref{est:w.lin}, 
this completes the proof.
\end{proof}

\subsection{Partial energy inequality}\label{ssec:pf.eni}

\begin{proof}[Proof of Proposition \ref{prop:edin.genP}]
	Observe that choosing $\tilde S\equiv0$ in~\eqref{eq:varin} shows that 
	$\int_0^{T'}\!\!\potential(\Stens)\dtau<\infty$. Let us further note that since $S\in \LR{\infty}(0,T';\LR{2}(\Om))$, almost every $T'\in(0,T)$ is a left Lebesgue point of $t\mapsto S(t)\in \LR{2}(\Om)$. 
	
	Extend now $S$ by zero for $t<0$ and consider for  $\kappa>0$ 
	\begin{align*}
		S_\kappa(t)=\kappa^{-1}\int_{t-\kappa}^tS(\tau)\,\dd\tau.
	\end{align*}
	Further let $\eta\in \CRi(\mathbb{R};[0,1])$ be nondecreasing, $\eta(t)=0$ for $t\le-1$ and $\eta(t)=1$ for $t\ge0$, and define $\eta_\delta(t):=\eta_{\delta}^{(T')}(t):=\eta((t-T')/\delta)$.
	We then choose in~\eqref{eq:varin} the  test function $\tilde S:=\tilde S_{\kappa,\delta}:=\eta_\delta S_\kappa\in Z_{T'}$, where $\delta\in(0,\delta_*]$ and $\kappa\in(0,\kappa_*]$ are chosen sufficiently small and, in particular,
	such that $\tilde S_{\kappa,\delta}(0)=0$.
	
	Since $\mathcal{P}$ is convex  with $\mathcal{P}(0)=0$ and $0\le \eta_\delta\le 1$, we can estimate
	using Jensen's inequality
	\begin{align*}
		\int_0^{T'}\!\!\potential(\tilde \Stens_{\kappa,\delta})\dtau \le
		\kappa^{-1}\int_{T'-\delta}^{T'}\!\!\eta_\delta(t) \int_{t-\kappa}^t\mathcal{P}(S(\tau))\dtau\,\dd t\le \kappa^{-1}\int_{0}^{T'}\!\!\potential(\Stens)\dtau\,\cdot\,\delta,
	\end{align*}
	where we also used the nonnegativity of $\mathcal{P}$.
	Since  $\int_0^{T'}\!\!\potential(\Stens)\dtau<\infty$, the last line implies that $\lim_{\delta\to0}\int_0^{T'}\!\potential(\tilde \Stens_{\kappa,\delta})\dtau=0$ for any $\kappa\in(0,\kappa_*]$.
	
	Let us next turn to the integral involving the time derivative. Using the fact that $\eta_\delta(T')=1$, we find
	\begin{align*}
		\int_0^{T'}\!\int_\Omega \pt\tilde\Stens:(\tilde S-\Stens)\,\dd x\,\dd t
		=  \frac{1}{2}\|S_\kappa(T')\|_2^2
		&-\int_0^{T'}\eta_\delta'(t)\int_\Omega S_\kappa:S\,\dd x\,\dd t 
		\\&
		-\int_{T'-\delta}^{T'} \!\eta_\delta(t) \!\int_\Omega\pt S_\kappa:S\,\dd x\,\dd t.
	\end{align*}
	Since $S\in \LR{\infty}(0,T';\LR{2}(\Om))$, we easily see that the term in the last line vanishes as $\delta\to0$.
	Furthermore, we have the following convergence results,  valid for almost all $T'\in(0,T):$
	\begin{align*}
		&\lim_{\delta\to0}\int_0^{T'}\eta_\delta'(t)\int_\Omega S_\kappa:S\,\dd x\,\dd t 
		=\int_\Omega S_\kappa(T'):S(T')\,\dd x,
		\\&\lim_{\kappa\to0}\int_\Omega S_\kappa(T'):S(T')\,\dd x = \|S(T')\|_2^2,
		\\&\lim_{\kappa\to0}\,\frac{1}{2}\|S_\kappa(T')\|_2^2=\frac{1}{2} \|S(T')\|_2^2.
	\end{align*}
	Thus, for almost all $T'$ we obtain
	\begin{align*}
		\lim_{\kappa\to0}\;\lim_{\delta\to0}\;
		\int_0^{T'}\!\!\int_\Omega \pt\tilde\Stens_{\kappa,\delta}:(\tilde S_{\kappa,\delta}-\Stens)\,\dd x\,\dd t = - \frac{1}{2} \|S(T')\|_2^2.
	\end{align*}
	All remaining integrals in~\eqref{eq:varin} involving $\tilde S$ converge to zero as $\delta\to0$, as long as $\kappa$ is positive. 
	Thus, sending first $\delta\to0$ in~\eqref{eq:varin} (with $\tilde S=\tilde S_{\kappa,\delta}$), and taking subsequently the limit $\kappa\to0$, we arrive at~\eqref{eq:edin.S.genP}.
\end{proof}

\section*{Acknowledgements}

The authors would like to thank the anonymous reviewers for their careful reading of the article as well as
their valuable and precise comments, which significantly improved the presentation of this manuscript.


\newcommand{\etalchar}[1]{$^{#1}$}

\end{document}